\theoremstyle{plain}
\newtheorem{theo}{Theorem}[section]}
\theoremstyle{plain}
\newtheorem{coro}[theo]{Corollary}}
\newtheorem{lemm}[theo]{Lemma}}
\newtheorem{prop}[theo]{Proposition}}
\newtheorem{conj}[theo]{Conjecture}}
\newtheorem{defi}[theo]{Definition}
\theoremstyle{plain}
\newtheorem{exple}[theo]{Example}}
\theoremstyle{plain}
\newtheorem{rema}[theo]{Remark}}
\newtheorem*{proof}{Proof}}
\newcommand{\ProofEnd}{\hfill $\Box$}
\DeclareMathOperator{\Reg}{Reg}
\DeclareMathOperator{\ord}{ord}
\DeclareMathOperator{\Gal}{Gal}
\DeclareMathOperator{\NS}{NS}
\DeclareMathOperator{\Pic}{Pic}
\DeclareMathOperator{\Br}{Br}
\DeclareMathOperator{\DEGRE}{deg }
\renewcommand{\deg}{\DEGRE}
\newcommand{\R}{\ensuremath{\mathbb{R}}}
\newcommand{\Q}{\ensuremath{\mathbb{Q}}}
\newcommand{\C}{\ensuremath{\mathbb{C}}}
\newcommand{\Z}{\ensuremath{\mathbb{Z}}}
\renewcommand{\P}{\ensuremath{\mathbb{P}}}
\newcommand{\F}{\ensuremath{\mathbb{F}}}
\renewcommand{\H}{\ensuremath{\mathrm{H}}}
\newcommand{\e}{\ensuremath{\mathrm{e}}}
\newcommand{\dd}{\ensuremath{\mathrm{d}}}
\newcommand{\Scal}{\mathcal{S}}
\newcommand{\fr}{\mathrm{Fr}}
\newcommand{\Qbar}{\ensuremath{\bar{\mathbb{Q}}}}
\renewcommand{\bar}[1]{\ensuremath{\overline{#1}}}
\newcommand{\hhat}[1]{\ensuremath{\widehat{#1}}}
\newcommand{\nequiv}{\not\equiv}
\newcommand{\lint}{\llbracket}
\newcommand{\rint}{\rrbracket}
\renewcommand{\tt}{\mathbf{t}}
\renewcommand{\epsilon}{\varepsilon}
 \newcommand{\ind}{1\mkern-4.1mu\mathrm{l}}
\newcommand{\ie}{\textit{i.e.}{}}
\DeclareFontFamily{U}{russian}{}
\DeclareFontShape{U}{russian}{m}{n}
	{ <5><6> wncyr5
	<7><8><9> wncyr7
	<10><10.95><12><14.4><17.28><20.74><24.88> wncyr10 }{}
\DeclareSymbolFont{Russian}{U}{russian}{m}{n}
\DeclareSymbolFontAlphabet{\mathcyr}{Russian}
\let\@math@cyr\mathcyr
\renewcommand{\mathcyr}[1]{\@math@cyr{\cyracc #1}}
\newcommand{\ferm}{\ensuremath{\mathcal{F}}}
\newcommand{\sbgp}[2]{{\langle#1\rangle_{#2}}}
\newcommand{\spval}[1]{P^\ast(#1)}
\newcommand{\ee}{\mathbf{e}}
\definecolor{myblue}{rgb}{0.2,0.6,1}
\definecolor{myblue2}{rgb}{0,0.4,0.8}
\definecolor{noir}{rgb}{0,0,0}
\definecolor{boxescol}{rgb}{0.25,0.25,0.25}
\newcommand{\intent}[1]{\llbracket #1\rrbracket}
\newcommand{\partfrac}[1]{\left\{#1\right\}}
\newcommand{\partint}[1]{\left\lfloor#1\right\rfloor}
\newcommand{\tors}{_{\mathrm{tors}}}
\newcommand{\Ja}{\mathbf{J}}
\newcommand{\bA}{\mathbf{A}}
\newcommand{\ba}{\mathbf{a}}
\newcommand{\bB}{\mathbf{B}}
\newcommand{\bb}{\mathbf{b}}
\newcommand{\bc}{\mathbf{c}}
\newcommand{\cchi}{\hm{\chi}}
\newcommand{\zzeta}{\hm{\zeta}}
\newcommand{\xx}{\hm{x}}
\newcommand{\norm}{\mathbf{N}}
\newcommand{\gP}{\mathfrak{P}}
\newcommand{\gp}{\mathfrak{p}}
\title{A  Brauer-Siegel theorem for Fermat surfaces over finite fields}
\author{Richard {Griffon}
\footnote{ E-mail: \url{r.m.m.griffon@math.leidenuniv.nl}
-- Date: \today{} 
}  
$\ $ (Universiteit Leiden)
}
\date{}
\renewcommand{\O}{\mathcal{O}}
\begin{document}

\maketitle

\paragraph{Abstract --}We prove an analogue of the Brauer-Siegel theorem for Fermat surfaces  over a finite field $\F_q$. Namely, letting $\ferm_d$ be the Fermat surface of degree $d$ over $\F_q$ and $p_g(\ferm_d)$ be its geometric genus, we show that, 
for $d\to\infty$ ranging over the set of integers coprime with $q$, one has
\[ \log\left( |\Br(\ferm_d)|\cdot \Reg(\ferm_d)\right) \sim \log q^{p_g(\ferm_d)} \sim \frac{\log q}{6} \cdot d^3.\]
Here, $\Br(\ferm_d)$ denotes the Brauer group of $\ferm_d$ and $\Reg(\ferm_d)$ the absolute value of a Gram determinant of the Néron-Severi group $\NS(\ferm_d)$ with respect to the intersection form.
%Our main contribution is an asymptotic estimate on the size of the special value at $s=1$ of the zeta function of $\ferm_d$.

\section*{Introduction}
\pdfbookmark[0]{Introduction}{Introduction} 
\addcontentsline{toc}{section}{Introduction}

\setcounter{section}{1}

\paragraph{} We prove an analogue for certain surfaces over a finite field of the classical Brauer-Siegel theorem, 
which asserts that, in families of number fields $k$ of bounded degree, the product of the class number $h_k$ and of the regulator of units $R_k$ is of order of magnitude $\sqrt{\Delta_k}$, where $\Delta_k$ denotes the absolute value of the discriminant of $k$. More precisely,
\begin{theo}[Brauer-Siegel] \label{theo.BS}
When $k$ runs through a sequence of number fields, whose degrees over $\Q$ are bounded and whose discriminants $\Delta_k$ grow to infinity, one has: %. Then, as $k$ runs through $\mathscr{K}$ with $\Delta_k\to\infty$,
\[\log(h_k\cdot R_k)\sim \log\sqrt{\Delta_k} \qquad\text{(as $\Delta_k\to\infty$)}. \]
\end{theo}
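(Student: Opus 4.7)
The plan is to deduce the asymptotic from the analytic class number formula, which expresses $h_k\cdot R_k$ in terms of the residue of the Dedekind zeta function at $s=1$:
\[ \kappa_k \,:=\, \lim_{s\to 1}(s-1)\,\zeta_k(s) \,=\, \frac{2^{r_1}(2\pi)^{r_2}\,h_k\, R_k}{w_k\,\sqrt{|\Delta_k|}}, \]
where $r_1,r_2$ are the numbers of real and complex places of $k$ and $w_k$ is the number of roots of unity. Since $[k:\Q]$ is bounded, $r_1,r_2$ and $w_k$ are bounded, so $\log(2^{r_1}(2\pi)^{r_2}/w_k)=O(1)=o(\log \Delta_k)$. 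The theorem thus reduces to proving the two-sided estimate $\log \kappa_k = o(\log \Delta_k)$ as $\Delta_k\to\infty$.

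For the upper bound, I would use the functional equation of $\zeta_k(s)$ together with the Phragm\'en--Lindel\"of convexity principle to bound $|\zeta_k(s)|$ in a neighbourhood of $s=1$ by $(\log \Delta_k)^{[k:\Q]}$ up to an absolute constant, then deduce $\kappa_k \ll (\log \Delta_k)^{[k:\Q]}$ by a standard contour manipulation or by evaluating at a point slightly to the right of $1$. This already gives $\log \kappa_k = O(\log\log \Delta_k)$, which is far more than the $o(\log \Delta_k)$ needed.

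The lower bound is the heart of the matter. The strategy is to factor $\zeta_k(s)$ as a product of $L$-functions with non-negative integer exponents: when $k/\Q$ is Galois this is the Artin factorization $\zeta_k(s)=\prod_\rho L(s,\rho)^{\dim \rho}$, and in general one invokes Aramata--Brauer to write $\zeta_k(s)/\zeta(s)$ as a product of Hecke $L$-functions to non-negative integer powers (via Brauer's induction theorem, so that in fact only abelian $L$-functions intervene after a further step). Combining this factorization with Siegel's lower bound $L(1,\chi)\gg_\epsilon \Delta_k^{-\epsilon}$ for real primitive Dirichlet (or Hecke) characters of conductor dividing $\Delta_k$ then yields $\kappa_k \gg_\epsilon \Delta_k^{-\epsilon}$ for every $\epsilon>0$, hence $\log \kappa_k \geq -\epsilon \log \Delta_k$ for $\Delta_k$ large enough.

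The main obstacle is precisely Siegel's bound on $L(1,\chi)$, whose proof is famously ineffective: one argues by cases depending on whether some real Dirichlet $L$-function has an exceptional (Siegel) zero close to $s=1$, and exploits the hypothetical presence of such a zero to repel the zeros of all other $L$-functions in the family; the constant implicit in $\gg_\epsilon$ cannot be made explicit. A secondary technical point is the passage, via Brauer's induction, from arbitrary number fields of bounded degree to the abelian setting where Siegel's theorem is available.
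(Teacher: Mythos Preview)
The paper does not actually prove Theorem~\ref{theo.BS}: it is quoted as classical background, with a reference to \cite[Chap.~XVI]{LANT}, and the paper only summarises the two ingredients --- the analytic class number formula relating $h_kR_k$ to the residue $\zeta_k^\ast$ of $\zeta_k(s)$ at $s=1$, and the estimate $\log\zeta_k^\ast = o(\log\sqrt{\Delta_k})$. Your sketch follows exactly this outline and supplies more detail than the paper does, so at the level of strategy you are in complete agreement with what the paper says.

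One technical point in your lower-bound paragraph deserves correction. Brauer's induction theorem expresses an arbitrary character as a $\Z$-linear combination of monomial characters, and the coefficients may well be negative; it does \emph{not} give $\zeta_k(s)/\zeta(s)$ as a product of Hecke $L$-functions to non-negative integer powers. (Aramata--Brauer is the statement that $\zeta_K/\zeta_k$ is entire for $k\subset K$ normal, which is a different matter.) This does not wreck the argument --- for factors appearing with negative exponent one needs an \emph{upper} bound on $L(1,\chi)$, which is the easy direction --- but the bookkeeping is a bit more delicate than you indicate: one must control the conductors of all the Hecke characters that appear in terms of $\Delta_k$ (via the conductor--discriminant formula for the normal closure), and the number of factors in terms of the bounded degree. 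The standard treatment in Lang handles this; your sketch is correct in spirit but the phrase ``non-negative integer powers'' should be dropped.
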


The proof of Theorem \ref{theo.BS} (see \cite[Chap. XVI]{LANT}) is analytic, in that it uses properties of the Dedekind zeta functions $\zeta_k(s)$ of the number fields $k$ in the sequence. There are two main ingredients to it: the first is the analytic class number formula, which relates $h_k\cdot R_k$ to the residue $\zeta_k^\ast$ of $\zeta_k(s)$ at its pole at $s=1$, and the second is the asymptotic estimate $\log\zeta_k^\ast = o\big(\log\sqrt{\Delta_k}\big)$ 
obtained by studying the behaviour of $\zeta_k(s)$ around $s=1$.

In the analogy between number fields and function fields of curves over finite fields, Theorem \ref{theo.BS} has the following translation (see \cite{Inaba}, or \cite{LutharGogia} for a similar statement):

\begin{theo}[Inaba]\label{theo.BS.curves} Given a finite field $\F_q$, when $C$ runs through a sequence of smooth projective and geometrically connected curves over $\F_q$, whose gonalities are bounded and whose genera $g_C$ grow to infinity, one has 
\[ \log \big|\mathrm{Jac}_C(\F_q)\big|%=\log(h_C)
\sim \log q^{g_C}, \qquad\text{(as $g_C\to\infty$)},
\]
where $\mathrm{Jac}_C$ denotes the Jacobian variety of $C$.
\end{theo}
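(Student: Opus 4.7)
The plan is to mimic the analytic proof of Theorem~\ref{theo.BS}. The analogue of the class number formula for curves is the identity $|\mathrm{Jac}_C(\F_q)| = L_C(1)$, where $L_C(T) = \prod_{i=1}^{2g_C}(1-\alpha_i T) \in \Z[T]$ is the numerator of the zeta function $Z_C(T) = L_C(T)/((1-T)(1-qT))$, with $|\alpha_i|=\sqrt{q}$ by Weil's Riemann hypothesis. The task thus reduces to proving $\log L_C(1)\sim g_C\log q$.

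To extract the main term, I would write $1-\alpha_i = -\alpha_i(1-\alpha_i^{-1})$ and observe that $\prod_i\alpha_i = q^{g_C}$ (from the functional equation). Since $|\alpha_i^{-1}|=q^{-1/2}<1$, the series $\log(1-\alpha_i^{-1}) = -\sum_{n\geq 1}\alpha_i^{-n}/n$ converges absolutely; interchanging the finite and infinite sums then yields
\[ \log L_C(1) = g_C\log q - \sum_{n\geq 1}\frac{q^n+1-N_n}{n\,q^n}, \]
where I have used that the $\alpha_i$ are closed under complex conjugation and that $\sum_i\alpha_i^n = q^n+1-N_n$, with $N_n = |C(\F_{q^n})|$ (Lefschetz trace formula). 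Setting $c_n := q^n+1-N_n$, it remains to show $\sum_n c_n/(n q^n)=o(g_C)$.

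Two bounds on $c_n$ play against each other. First, Weil's inequality gives $|c_n| \leq 2g_C\, q^{n/2}$, which is efficient for large $n$ thanks to the $q^{-n}$ weight. Second, if $G$ is a uniform upper bound for the $\F_q$-gonality of the curves in the sequence, fix for each $C$ a nonconstant $\F_q$-morphism $\phi : C \to \P^1$ of degree at most $G$; then $N_n \leq G(q^n+1)$ and hence $|c_n| \leq G(q^n+1)$, a bound uniform in $g_C$ and thus efficient for small $n$. Splitting the series at $N_0 := \lceil 2\log g_C/\log q\rceil$, the head ($n \leq N_0$) contributes at most $G\sum_{n \leq N_0}(1+q^{-n})/n = O_G(\log\log g_C)$ via the gonality bound, while the tail ($n > N_0$) contributes at most $(2g_C/N_0)\sum_{n>N_0}q^{-n/2} = O_q(1/\log g_C)$ via Weil, by our choice of $N_0$ (which ensures $q^{-N_0/2}\leq 1/g_C$). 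Combining, $\log L_C(1) = g_C\log q + O_{q,G}(\log\log g_C)$, which proves the equivalence.

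The main obstacle lies in this final estimate: Weil's bound alone would yield an $O(g_C)$ error term, of the same order of magnitude as the main term itself, which would preclude any asymptotic equivalence. The bounded-gonality hypothesis is essential---playing here the role of the bounded-degree hypothesis in Theorem~\ref{theo.BS}---because it is what provides the uniform-in-$g_C$ control on the small-$n$ Frobenius traces that is needed to beat the trivial bound.
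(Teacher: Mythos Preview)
Your argument is correct. The identity $|\Jac_C(\F_q)|=L_C(1)$, the factorisation $L_C(1)=q^{g_C}\prod_i(1-\alpha_i^{-1})$ via the functional equation, the expansion into the series $\sum_n c_n/(nq^n)$, and the splitting at $N_0\asymp \log g_C$ using Weil for the tail and the gonality bound $N_n\leq G(q^n+1)$ for the head are all sound; the resulting error term $O_{q,G}(\log\log g_C)$ is even sharper than what is needed for the asymptotic equivalence.

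As for comparison with the paper: there is nothing to compare. The paper does not give a proof of this statement; Theorem~\ref{theo.BS.curves} is quoted as a known result due to Inaba (with a reference also to Luthar--Gogia), and the paper only remarks in one sentence that the proof is ``analytic too, this time using the Hasse--Weil zeta functions $\zeta(C/\F_q,s)$ of the curves $C$ in the sequence''. Your proposal is precisely a fleshing-out of that one-line description, so it is fully in the spirit of what the paper alludes to.
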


The analogy with Theorem \ref{theo.BS} becomes evident upon noting that $\mathrm{Jac}_C(\F_q)$ is isomorphic to the divisor class group of $\F_q(C)$, and that no regulator of units appears in this setting. 
The proof of Theorem \ref{theo.BS.curves} is analytic too, this time using the Hasse-Weil zeta functions $\zeta(C/\F_q, s)$ of  the curves $C$ in the sequence. 
%Note that the asymptotic relation can be made more precise, \cite{LachaudMD} or \cite{Shparlinski}.
Both Theorem \ref{theo.BS} and Theorem \ref{theo.BS.curves} have been recently generalized: for example, see \cite{TsfasmanVladut} for a study of the consequences of weakening the hypothesis of bounded degree in Theorem \ref{theo.BS} (respectively, of bounded gonality in Theorem \ref{theo.BS.curves}).

\paragraph{}  
In this article, we prove an analogue of Theorems \ref{theo.BS} and \ref{theo.BS.curves} for a sequence of surfaces over a finite field.
More precisely, given a finite field $\F_q$, consider for all integers $d\geq 2$, %coprime with $q$, 
the Fermat surface $\ferm_d$ of degree $d$, \ie{} the hypersurface of $\P^3$ over $\F_q$ given by:
\[\ferm_d: \qquad X_0^d+X_1^d +X_2^d+X_3^d=0.\]
To ensure that $\ferm_d$ is smooth and geometrically irreducible, we always assume that $d$ is prime to $q$.  
Let $\NS(\ferm_d)$ be the Néron-Severi group of $\ferm_d$ over $\F_q$, it is known to be a finitely generated torsion-free abelian group (see \cite[Chap. V]{Milne_EtCoh} and \cite{Shioda_jacobi}).
It is endowed with the intersection form (a nondegenerate bilinear pairing), and 
one can then 
define the \emph{regulator of $\ferm_d$} to be the Gram determinant 
\[ \Reg(\ferm_d):= \left| \det\big[ (C_i \cdot C_j) \big]_{1\leq i,j\leq \rho}  \right|\in\Z, \]
where $C_1, \dots, C_\rho$ are divisors on $\ferm_d$ whose classes form a $\Z$-basis of $\NS(\ferm_d)$. This construction is reminiscent of the definition of the regulator of units of a number field  as a determinant.
%In our (imperfect) analogy, $\Reg(\ferm_d)$ will play the role of the regulator of units of a number field.
%Fix a prime $\ell\neq p$, and write $\H^i_{ét}$ for the $i$-th $\ell$-adic cohomology spaces: the Brauer group of $\ferm_d$ is defined by 
%\[\Br(\ferm_d):= \H^{2}_{ét}(\ferm_d, \mathbb{G}_m) = \H^{2}_{ét}(\ferm_d, \O_{\ferm_d}^\times).\]
%Since $\ferm_d$ is smooth and projective, this cohomological version of the Brauer group is isomorphic to the "usual" Brauer group of $\ferm_d$. 
% (defined in terms of Azumaya algebras, see \cite{Groth_BrauerI}, \cite{Groth_BrauerII}, \cite{Milne_EtCoh}). 
%\aref{Grothendieck ? Milne EtCoh? Tate 66 ?}). 

Besides, recall that the Brauer group $\Br(\ferm_d)$ is defined as the group of similarity classes of Azumaya algebras over $\ferm_d$ (see \cite{Groth_BrauerI}, \cite{Groth_BrauerII}, \cite{Milne_EtCoh}).
For our purpose, it is sufficient to know that $\Br(\ferm_d)$ classifies algebraic objects on $\ferm_d$ that are everywhere locally trivial but not globally trivial. %to the local-global principle  
It is thus a distant relative of the class group of a number field $k$, which classifies ideals that are everywhere locally principal, but not necessarily globally so. 
The Brauer groups of Fermat surfaces has been shown to be finite (see \cite{Shioda_Katsura}, \cite{Tate_BSD}, \cite{Milne_conjAT}).

\paragraph{} Our main result in this article is the following asymptotic estimate on the product $|\Br(\ferm_d)|\cdot \Reg(\ferm_d)$ in terms of the geometric genus $p_g(\ferm_d)$ of $\ferm_d$:
\begin{theo}\label{theo.BS.FERMAT}
 Let $\F_q$ be a finite field and, for any integer $d\geq 2$ that is coprime to $q$, let $\ferm_d$ be the $d$-th Fermat surface over $\F_q$. With notations as above, one has
\begin{equation}\label{eq.intro.BS.FERMAT}
\log\big( \left|\Br(\ferm_d)\right|\cdot\Reg(\ferm_d)\big) 
 \sim \log q^{p_g(\ferm_d)} %\sim (\log q)\cdot p_g(\ferm_d) 
% \sim \frac{\log q}{6}\cdot d^3
 \qquad (d\to\infty),
 \end{equation}
where $p_g(\ferm_d)$ denotes geometric genus of $\ferm_d$.
\end{theo}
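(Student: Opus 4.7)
The plan is to follow the analytic strategy of the classical Brauer-Siegel theorem: first relate $|\Br(\ferm_d)|\cdot\Reg(\ferm_d)$ to a special value of the zeta function of $\ferm_d$ at $s=1$, and then estimate that special value. For the first step I would invoke the Tate conjecture for Fermat surfaces---known unconditionally thanks to work of Tate, Shioda and others---which, together with the Artin--Tate formula, yields
\[ \frac{|\Br(\ferm_d)|\cdot \Reg(\ferm_d)}{|\NS(\ferm_d)\tors|^{2}} \;=\; q^{\alpha(\ferm_d)}\cdot P_2^{\ast}(\ferm_d, q^{-1}), \]
where $P_2(\ferm_d,T)$ is the degree-$2$ factor of the zeta function of $\ferm_d$, $P_2^{\ast}(\ferm_d, q^{-1}) := \lim_{T\to q^{-1}} P_2(\ferm_d,T)/(1-qT)^{\rho}$, and $\alpha(\ferm_d)=\chi(\O_{\ferm_d})-1+\dim\Pic^0(\ferm_d)$. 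Since Fermat surfaces satisfy $h^{0,1}=0$ (so $\Pic^0$ is trivial) and $\NS(\ferm_d)$ is torsion-free, these simplifications yield $\alpha(\ferm_d)=p_g(\ferm_d)$, and after taking logarithms the theorem reduces to the estimate
\[ \log P_2^{\ast}(\ferm_d, q^{-1}) = o(d^3) \qquad (d\to\infty). \]

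For this second step I would exploit Weil's explicit description of the zeta function of Fermat surfaces. After passing to the extension $\F_{q^f}$ of $\F_q$ of degree $f=\ord_d(q)$ (where multiplicative characters of order $d$ become available) and reducing the original estimate to that case via standard base-change arguments, the Frobenius eigenvalues on $H^2_{\text{\'et}}(\bar{\ferm_d},\Q_\ell)$ other than $q$ are Jacobi sums $j(\alpha)$ indexed by the set $\Ja_d$ of tuples $\alpha=(a_0,a_1,a_2,a_3)\in((\Z/d\Z)\setminus\{0\})^4$ satisfying $a_0+a_1+a_2+a_3\equiv 0\pmod{d}$. Each $j(\alpha)$ has complex absolute value $q$, and $|\Ja_d|$ is of order $d^3$. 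Writing $j(\alpha)=q\,e^{i\theta_\alpha}$, the task becomes to prove
\[ \sum_{\alpha\in\Ja_d,\ j(\alpha)\ne q}\log\bigl|2\sin(\theta_\alpha/2)\bigr| \;=\; o(d^3), \]
which, in view of the identity $\frac{1}{2\pi}\int_0^{2\pi}\log|2\sin(\theta/2)|\,\dd\theta=0$, is precisely an averaged equidistribution statement for the angles $(\theta_\alpha)_{\alpha\in\Ja_d}$ on the unit circle.

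The main obstacle is to turn this heuristic into an unconditional estimate. The upper bound $\log P_2^{\ast}(\ferm_d, q^{-1})\le o(d^3)$ can be obtained by combining the trivial pointwise estimate $\log|1-j(\alpha)/q|\le\log 2$ with a Weyl-type equidistribution argument for the angles $\theta_\alpha$ inside the set $\Ja_d$. The matching lower bound $\log P_2^{\ast}(\ferm_d, q^{-1})\ge -o(d^3)$ is the harder half, since a single Jacobi sum very close to $q$ could in principle contribute an arbitrarily large negative term. Controlling such clustering I would tackle either through an explicit lower bound of the shape $|1-j(\alpha)/q|\gg_q d^{-c}$ for every $\alpha$ with $j(\alpha)\ne q$---typically derivable from the Stickelberger factorization of Gauss sums in cyclotomic rings together with estimates on $p$-adic valuations of $j(\alpha)-q$---or via a quantitative equidistribution theorem for $(\theta_\alpha)$ with power-saving error terms, which would bound the count of $\alpha$ with $|\theta_\alpha|<\varepsilon$ and permit a dyadic splitting of the sum. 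Either route should deliver the required $o(d^3)$ bound and complete the proof.
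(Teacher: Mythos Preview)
Your overall strategy matches the paper's: reduce Theorem~\ref{theo.BS.FERMAT} via the Artin--Tate formula (valid for $\ferm_d$ by Shioda) to the estimate $\log P_2^\ast(\ferm_d,q^{-1})=o(d^3)$, and split this into an upper and a lower bound. The first step is exactly right, including the simplifications $\NS(\ferm_d)\tors=0$ and $\alpha(\ferm_d)=p_g(\ferm_d)$.

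The second step, however, diverges from the paper in ways that create genuine gaps. First, the base change to $\F_{q^f}$ with $f=\ord_d(q)$ is not innocuous: the relation between $P_2^\ast(\ferm_d/\F_q,q^{-1})$ and $P_2^\ast(\ferm_d/\F_{q^f},q^{-f})$ is not a simple power, and $f$ can be as large as $\phi(d)$. The paper avoids this by working directly over $\F_q$, using Weil's formula $P_2(\ferm_d/\F_q,T)=(1-qT)\prod_{\bA\in\O_q(G_d^\circ)}(1-\Ja(\ba)T^{|\bA|})$ indexed by orbits of $\langle q\rangle_d$ on $G_d^\circ$. Second, for the \emph{upper} bound the paper does not use equidistribution of angles at all: the orbit description gives only $O(d^3/\log d)$ factors in the product for $P_2^\ast$, each bounded by $|\bA|\cdot 2$, and this already yields $\log P_2^\ast \ll d^3\log\log d/\log d$. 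Your sketch (``pointwise $\le\log 2$ plus Weyl equidistribution'') would need the cancellation from equidistribution to beat $O(d^3)$, which is unnecessary.

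The real gap is in the lower bound. Your first route, a pointwise estimate $|1-j(\alpha)/q|\gg_q d^{-c}$, even if true, only gives $\sum_\alpha\log|1-j(\alpha)/q|\geq -c\,d^3\log d$, which is \emph{not} $o(d^3)$; so this route fails quantitatively. Your second route, archimedean equidistribution of the $\theta_\alpha$ with a power-saving error term, is not known in the generality needed and would still have to handle the logarithmic singularity of $\log|2\sin(\theta/2)|$ at $\theta=0$. The paper takes a completely different path: since $P_2^\ast\in\Z[q^{-1}]\smallsetminus\{0\}$, one may write $P_2^\ast=N/q^{w}$ with $N\in\Z_{\geq 1}$, and the whole lower bound reduces to showing $w=o(d^3)$. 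Stickelberger's theorem gives the $\gp$-adic valuation of each Jacobi sum explicitly as an average of fractional parts $\{a_i\pi/d\}$ over $\pi\in\langle p\rangle_d$, and the paper then proves an equidistribution theorem \emph{in $\Z/d\Z$} (not on the unit circle): on average over $g\in(\Z/d\Z)^\times$, the translates $g\cdot\langle p\rangle_d$ are equidistributed provided $|\langle p\rangle_d|/\log\log d\to\infty$. This is what forces $w=o(d^3)$. In short, the missing idea is to replace the archimedean picture by the $p$-adic one: control the denominator of the rational number $P_2^\ast$ via Stickelberger, and then average.
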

It then follows from an easy estimate of $p_g(\ferm_d)$ that, when $d\to\infty$, one has
\[\log\big( \left|\Br(\ferm_d)\right|\cdot\Reg(\ferm_d)\big) 
% \sim \log q^{p_g(\ferm_d)} %\sim (\log q)\cdot p_g(\ferm_d) 
 \sim \frac{\log q}{6}\cdot d^3. \]
This  shows that the product %tells us that, when $d\to\infty$, 
$|\Br(\ferm_d)|\cdot\Reg(\ferm_d)$ grows 
 exponentially fast with $d$.

 \paragraph{} The proof of Theorem \ref{theo.BS.FERMAT} %: in line with the analogy, we 
proceeds in two steps. For any integer $d\geq 2$ coprime with $q$, denote by  $\zeta(\ferm_d/\F_q, s)$ the Hasse-Weil zeta function of $\ferm_d$. 
%Weil has given an "explicit" expression for $\zeta(\ferm_d/\F_q, s)$ (see \cite{weil49}): putting $T=q^{-s}$, his result imply that $\zeta(\ferm_d/\F_q, s)$  is a rational function of $T$ and more precisely that:
Putting $T=q^{-s}$, Weil's work \cite{weil49} shows that:
\begin{equation}\label{eq.intro.weilzeta}
\zeta(\ferm_d/\F_q, s) = \frac{1}{(1-T)\cdot P_2(\ferm_d/\F_q, T)\cdot(1-q^2T)},
\end{equation}
where $P_2(\ferm_d/\F_q, T)$ is an explicit polynomial with integral coefficients (see Section \ref{sec.zeta.ferm} for details). The Artin-Tate conjecture for surfaces (see \cite{Tate_BSD}, \cite{Tate_conj}, \cite{Milne_conjAT}) provides a (conjectural) analogue of the class number formula (see Section \ref{sec.AT}). For the  surfaces $\ferm_d$, this conjecture has been fully proved by Katsura and Shioda (see \cite{Shioda_Katsura}, \cite{Shioda_jacobi}). Recall that the special value of $P_2(\ferm_d/\F_q, T)$ at $T=q^{-1}$ is defined as follows: 
we let $\rho:=\ord_{T=q^{-1}}P_2(\ferm_d/\F_q, T)$ and put %define the special value:
\[P_2^\ast(\ferm_d/\F_q):= \left. \frac{P_2(\ferm_d/\F_q, T)}{(1-qT)^{\rho }}\right|_{T=q^{-1}} \in\Z[q^{-1}]\subset\Q.
\]
The Artin-Tate conjecture implies that $\rho = \mathrm{rk} \NS(\ferm_d)$ and 
that $P_2^\ast(\ferm_d/\F_q)$ has the following interpretation (see Section \ref{sec.AT.ferm}): 
\begin{equation}\label{eq.intro.AT.ferm}
P_2^\ast(\ferm_d/\F_q) = \frac{|\Br(\ferm_d)|\cdot\Reg(\ferm_d)}{q^{p_g(\ferm_d)}}.
\end{equation}

The second step of the proof of Theorem \ref{theo.BS.FERMAT} is thus to find %upper and lower 
bounds on $P_2^\ast(\ferm_d/\F_q)$. More precisely, \eqref{eq.intro.AT.ferm} tells us that Theorem \ref{theo.BS.FERMAT} will follow from an estimate of the shape
${\log P_2^\ast(\ferm_d/\F_q)= o \big(\log q^{p_g(\ferm_d)} \big)}$ (when $d\to\infty$). We actually prove a more precise statement:  

\begin{theo}\label{theo.BOUND.SPVAL} Let $\F_q$ be a finite field of characteristic $p$ and $\epsilon\in(0, 1/4)$. For any integer $d\geq 2$ prime to $q$, as $d\to\infty$, one has: 
\begin{equation}\label{eq.intro.BOUND}
- c_1\cdot \left(\frac{\log\log d}{\log d}\right)^{1/4-\epsilon}
\leq \frac{\log P_2^\ast(\ferm_d/\F_q)}{\log  q^{p_g(\ferm_d)} } \leq c_2\cdot \frac{\log\log d}{\log d},
\end{equation}
where $c_1, c_2$ are  positive constants that depend at most on $q$, $p$ and $\epsilon$.
\end{theo}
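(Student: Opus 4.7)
The plan is to estimate the special value $P_2^\ast(\ferm_d/\F_q)$ via Weil's explicit description of $P_2(\ferm_d/\F_q,T)$ in terms of Jacobi sums. The reciprocal roots $\omega_\alpha$ of $P_2(\ferm_d/\F_q,T)$ are naturally indexed by the set
\[\mathfrak{A}(d) = \bigl\{\alpha = (a_0,a_1,a_2,a_3) \in ((\Z/d\Z)\setminus\{0\})^4 :\ a_0+a_1+a_2+a_3\equiv 0 \pmod d\bigr\},\]
and are (roots of) Jacobi sums; by Deligne's theorem, $|\omega_\alpha|=q$. Writing $\omega_\alpha = q\,e^{i\theta_\alpha}$ and separating off the eigenvalues equal to $q$ (which are responsible for the factor $(1-qT)^\rho$), one obtains
\[\log P_2^\ast(\ferm_d/\F_q) = \sum_{\alpha:\,\omega_\alpha \neq q} \log\bigl|2\sin(\theta_\alpha/2)\bigr|,\]
so the theorem is reduced to controlling the angular distribution of Jacobi sums around the point $q$.

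For the upper bound, I would use that $\theta \mapsto \log|2\sin(\theta/2)|$ has vanishing mean on $[0,2\pi]$ (Jensen's formula applied to $z\mapsto z-1$) and is bounded above by $\log 2$, so contributions with small $\theta_\alpha$ can be discarded in the upper bound. A discrepancy estimate for the multiset $\{\theta_\alpha\}_\alpha$, derived from bounds on the Weyl sums $\sum_\alpha e^{ik\theta_\alpha}$ (which reduce to elementary character sum estimates), together with an Erd\H{o}s--Tur\'an--Koksma-type inequality adapted to the mild singularity of $\log|\sin|$, then delivers the upper bound at the rate $\log\log d/\log d$.

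The lower bound is harder because $\log|2\sin(\theta_\alpha/2)|$ diverges to $-\infty$ as $\theta_\alpha\to 0$, and must be controlled by combining two separate inputs. First, a Liouville-type bound: since $\omega_\alpha$ is an algebraic integer of absolute value $q$ in the cyclotomic field $\Q(\zeta_d)$, the nonzero integer $N_{\Q(\zeta_d)/\Q}(\omega_\alpha - q)$ has absolute value at least $1$, and bounding its other conjugate factors trivially yields $|\omega_\alpha - q|\gg (2q)^{-\phi(d)}$ whenever $\omega_\alpha\neq q$, hence $\theta_\alpha \gg q^{-\phi(d)}$. Second, an upper bound on $\#\{\alpha:\,0<\theta_\alpha\leq\delta\}$ derived from the same equidistribution input as above. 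Splitting the sum at an optimally chosen threshold $\delta$ and balancing the ``regular'' and ``singular'' ranges yields the lower bound with the weaker exponent $1/4-\epsilon$. The main obstacle is precisely this lower bound, and in particular the sharpness of the counting estimate near $\theta=0$: any improvement of the equidistribution or the Liouville input would directly sharpen the exponent, but closing the gap with the upper-bound rate $\log\log d/\log d$ would likely require much finer arithmetic information on Jacobi sums, for instance via Stickelberger's theorem or the Gross--Koblitz formula.
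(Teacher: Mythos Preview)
Your plan diverges from the paper's argument in both halves, and in each case the missing ingredient is the one that actually does the work.

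\textbf{Upper bound.} You propose to control $\sum_\alpha \log|2\sin(\theta_\alpha/2)|$ via angular equidistribution and Erd\H{o}s--Tur\'an. But the Weyl sums $\sum_\alpha e^{ik\theta_\alpha}=q^{-k}\sum_\alpha\omega_\alpha^k$ are essentially traces of $\mathrm{Frob}^k$ on $H^2$, hence governed by $|\ferm_d(\F_{q^k})|$; when $d\mid q^k-1$ there is no cancellation and the Weyl sum is of size $\asymp d^3$. So no useful discrepancy bound comes out of ``elementary character sum estimates''. The paper's upper bound bypasses this completely: it groups the factors by $\sbgp{q}{d}$-orbits and uses that each factor $\bigl|1-\Ja(\ba)/q^{|\bA|}\bigr|$ is at most $2$, while the \emph{number of orbits} is $\ll d^3/\log d$ because every orbit has length $o_q(d')\geq \log d'/\log q$. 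That elementary counting, not equidistribution, is what produces the $\log\log d/\log d$ rate.

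\textbf{Lower bound.} Your scheme combines a Liouville inequality $|\omega_\alpha-q|\gg(2q)^{-\phi(d)}$ with a count of small angles. But the small-angle count again needs the angular equidistribution that is unavailable, and the Liouville loss of $\phi(d)$ per bad $\alpha$ is too severe to balance against it. The paper's idea is arithmetic rather than archimedean: since $P_2^\ast\in\Z[q^{-1}]$, a lower bound on $|P_2^\ast|$ is the same as an upper bound on the exponent of $q$ in its denominator. That exponent is computed \emph{exactly} by Stickelberger's theorem, yielding
\[
w_p(\Lambda,d)=\sum_{\ba}\frac{1}{\phi(d)}\sum_{g\in(\Z/d\Z)^\times}\max\Bigl\{0,\ \sum_{i=0}^3\Bigl(-\tfrac12+\tfrac{1}{|\sbgp{p}{d}|}\sum_{\pi\in\sbgp{p}{d}}\partfrac{\tfrac{a_i\pi g}{d}}\Bigr)\Bigr\}.
\]
The equidistribution input (Theorem~\ref{theo.equidis}) is then applied not to Jacobi-sum angles but to the sets $g\cdot\sbgp{p}{d}\subset\Z/d\Z$, and it shows each inner average is close to $1/2$, so $w_p(\ba,d)$ is small on average. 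In other words, Stickelberger is not a potential refinement as you suggest at the end; it is the main engine of the lower bound, replacing the archimedean estimate you are attempting.
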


Theorem \ref{theo.BOUND.SPVAL} is a by-product of our main technical result (see Corollary \ref{theo.BNDS.GEN}), whose  proof occupies most of sections \ref{sec.upperbound} and \ref{sec.lowerbound}. The proof hinges on the explicit expression of $P_2(\ferm_d/\F_q, T)$ obtained by Weil. 
The upper bound in \eqref{eq.intro.BOUND} is 
relatively straightforward (see section  \ref{sec.upperbound}) but the lower bound is more demanding: 
let us give a rough sketch of our strategy (see section \ref{sec.lowerbound}). By construction, the special value has the shape: 
\begin{equation}\label{eq.intro.spval}
P_2^\ast(\ferm_d/\F_q) = \frac{\text{(integer prime to $q$)}}{q^{w_q(d)}} \quad \text{for an exponent } w_q(d)\in\Z_{\geq 0}.
\end{equation}
Upper bounds on $w_q(d)$ in terms of $p_g(\ferm_d)$ imply lower bounds on $\log P_2^\ast(\ferm_d/\F_q)$: to prove the one in \eqref{eq.intro.BOUND}, we are to show that $w_q(d) = o \big(p_q(\ferm_d)\big)$ (as $d\to\infty$). An argument  ``à la Liouville'' gives the trivial upper bound: 
$w_q(d) = O \big(p_q(\ferm_d)\big)$ (see Proposition \ref{prop.triv.lowerbnd}), and we improve on this as follows. The special value $P_2^\ast(\ferm_d/\F_q)$ is given as a product of algebraic numbers related to Jacobi sums: 
we keep track of the contribution of each factor of this product to the denominator in \eqref{eq.intro.spval} by making use of Stickelberger's theorem (see \cite{IR}, \cite{LANT}). In Theorem \ref{theo.proto.lowerbnd}, we thus obtain an explicit expression of $w_q(d)$ in terms of combinatorial data related to the action of $q$ on $\Z/d\Z$ by multiplication. % (see Theorem ). 
To conclude that $w_q(d) = o\big(p_q(\ferm_d)\big)$, we use %in Theorem \ref{theo.lowerbnd} 
an equidistribution statement which is proved in section \ref{sec.equidis} (Theorem \ref{theo.equidis}). This last step is carried out in Theorem \ref{theo.lowerbnd} and Corollary \ref{theo.BNDS.GEN}. %\todo{check numbering}

In passing, we prove bounds on the rank of $\NS(\ferm_d)$ (see Corollary \ref{coro.rank.bound}):
\begin{theo} \label{theo.intro.rank.bound}
For a given finite field $\F_q$, let $\ferm_d$ be the $d$-th Fermat surface over $\F_q$. We denote by $\rho(\ferm_d/\F_q)$ the rank of the Néron-Severi group of $\ferm_d/\F_q$. One has 
\begin{equation}\label{eq.intro.rank.bound}
\rho(\ferm_d/\F_q)\ll_q \frac{d}{\log d} \qquad (d\to\infty).
\end{equation}
The implied constant is effective and depends only on $q$. 
\end{theo}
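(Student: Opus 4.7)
The strategy would be to combine the equality $\rho(\ferm_d/\F_q) = \ord_{T=q^{-1}}P_2(\ferm_d/\F_q, T)$, which follows from the Tate conjecture for Fermat surfaces (proved by Shioda-Katsura), with Weil's explicit factorization $P_2(\ferm_d/\F_q, T) = \prod_{[\mathbf{a}]}(1 - j(\mathbf{a})T)$. Here the product ranges over Frobenius orbits $[\mathbf{a}]$ of admissible 4-tuples $\mathbf{a} = (a_0,\dots,a_3) \in (\Z/d\Z \setminus \{0\})^4$ satisfying $\sum_i a_i \equiv 0 \pmod d$, and $j(\mathbf{a})$ is an algebraic integer of absolute value $q^{|[\mathbf{a}]|}$. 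Thus $\rho(\ferm_d/\F_q)$ is precisely the number of orbits $[\mathbf{a}]$ for which $j(\mathbf{a})$ coincides with the ``trivial'' Weil number $q^{|[\mathbf{a}]|}$.

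Second, I would reuse the Stickelberger-based analysis of the denominator exponent $w_q(d)$ that is carried out in Section \ref{sec.lowerbound}. By Stickelberger's theorem, the equality $j(\mathbf{a}) = q^{|[\mathbf{a}]|}$ forces the combinatorial constraint $\sum_{i=0}^{3} \partfrac{q^k a_i/d} = 2$ to hold for every integer $k\geq 0$. Let $\mathcal{B}_d^q \subseteq (\Z/d\Z \setminus \{0\})^4$ denote the set of admissible tuples satisfying this condition for all $k$; then $\rho(\ferm_d/\F_q)$ is at most the number of Frobenius orbits in $\mathcal{B}_d^q$, and in particular at most $|\mathcal{B}_d^q|$.

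The last step would be to bound $|\mathcal{B}_d^q|$ by invoking the equidistribution result of Theorem \ref{theo.equidis}. Equidistribution of the sequence $\sum_i \partfrac{q^k a_i/d}$ around its mean value $2$ implies that the pointwise identity for \emph{every} $k\geq 0$ is violated for all but a sparse family of tuples; concretely, it forces $\mathbf{a}$ to be supported on a divisor $d'\mid d$ for which $q$ has small multiplicative order modulo $d'$. A counting argument over such divisors of $d$, combined with standard number-theoretic estimates on the scarcity of $d'\leq d$ with $\ord_{d'}(q)$ small, would yield the announced bound $\rho(\ferm_d/\F_q) \ll_q d/\log d$, with an effective constant depending only on $q$.

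The main obstacle is quantitatively deriving the sharp $d/\log d$ saving from the equidistribution statement: this requires careful book-keeping of the interaction between the orbit size $|[\mathbf{a}]|$ and the pointwise constraint, and it ultimately rests on prime-counting-style estimates controlling how many divisors $d'$ of $d$ admit a small multiplicative order of $q$. It is this sparsity, rather than a crude bound on the second Betti number (which is of size $d^3$), that accounts for the logarithmic factor.
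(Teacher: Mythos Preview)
Your approach takes an unnecessary detour and contains a genuine gap. The paper's proof (Corollary \ref{coro.rank.bound}, via Proposition \ref{theo.rank.bound} and Lemma \ref{lemm.prel.actq}\eqref{item.ii.actq}) is much simpler: once one knows that $\rho(\ferm_d/\F_q)$ equals the number of orbits $\bA\in\O_q(G_d)$ with $\Ja(\ba)=q^{|\bA|}$, one simply bounds this by the \emph{total} number of orbits $|\O_q(G_d)|$ and then bounds the latter by an elementary argument on orbit lengths (each orbit attached to a divisor $d'\mid d$ has length $o_q(d')\geq \log d'/\log q$, and summing $|\Lambda_{d'}|/o_q(d')$ over $d'\mid d$ with a dyadic split gives $\ll \log q\cdot |G_d|/\log|G_d|$). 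No Stickelberger, no equidistribution --- those tools are used in the paper for the lower bound on the special value, not for the rank.

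More seriously, your key claim is false. You propose to bound the cardinality $|\mathcal{B}_d^q|$ of tuples satisfying $\sum_i\partfrac{q^k a_i/d}=2$ for all $k$, arguing via equidistribution that this set is sparse. But for $d=q^n+1$, the theorem of Shafarevich--Tate cited in the proof of Proposition \ref{prop.rankbound.optimal} shows that \emph{every} $\ba\in G_d^\circ$ satisfies this constraint, so $|\mathcal{B}_d^q|\sim d^3$. Theorem \ref{theo.equidis} only controls an \emph{average} over $(\Z/d\Z)^\times$-translates; it says nothing about the pointwise constraint and does not prevent $\mathcal{B}_d^q$ from being all of $G_d^\circ$ when $o_q(d)$ is small. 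The saving of $1/\log d$ comes not from sparsity of $\mathcal{B}_d^q$ but purely from the orbit lengths: even when $\mathcal{B}_d^q=G_d^\circ$, one has $|\O_q(\mathcal{B}_d^q)|\leq|\O_q(G_d)|\ll d^3/\log d$. (Incidentally, the exponent in the statement should be $d^3/\log d$, in agreement with Corollary \ref{coro.rank.bound} and with the optimality in Proposition \ref{prop.rankbound.optimal}.)
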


This bound on the rank of Néron-Severi groups appears to be new: it improves greatly on the ``geometric'' rank bound of Igusa (see \cite{Igusa}), which would yield
that $\rho(\ferm_d/\F_q)  \leq \dim \H^2_{\acute{e}t}\left(\bar{\ferm_d}, \Q_\ell \right) \leq (d-1)^3.$
Moreover (Proposition \ref{prop.rankbound.optimal}), the bound \eqref{eq.intro.rank.bound} is ``asymptotically optimal'' in the sense that there exists an infinite sequence of integers $d'$ prime to $q$ such that
\[ \rho(\ferm_{d'}/\F_q)\gg_q \frac{{d'}}{\log {d'}} \to +\infty\qquad ({d'}\to\infty). \]

\paragraph{General notations} 
For any finite set $X$, we  denote the cardinality of $X$ by $|X|$.
If $f(x), g(x)$ are functions of a variable $x$ going to $\infty$, $f(x)\sim g(x)$ means that $\lim_{x\to\infty} f(x)/g(x) =1$; and $f(x)\ll_{a} g(x)$ means that $f(x) \leq C_a g(x)$ for some real constant $C_a>0$ depending at most on a parameter $a$.

If $q\geq 1$ and $d\geq 2$ are coprime integers, we let $\sbgp{q}{d} \subset (\Z/d\Z)^\times$ denote the subgroup generated by $q\bmod{d}$. 
The order of $\sbgp{q}{d}$, \ie{} the multiplicative order of $q\bmod{d}$, is denoted by $o_q(d)$. We write $\ord_p(.)$ to designate the $p$-adic valuation on $\Q$.

\paragraph{Acknowledgements} This work is based  on part of the author's PhD thesis \cite{GriffonPHD}. The author wishes to thank his advisor Marc Hindry for his support and many illuminating conversations.
He also thanks Peter Stevenhagen, Michael Tsfasman and Douglas Ulmer for carefully reading drafts of this work, 
and  for many comments and corrections. 
Thanks are also due to Igor Shparlinski for pointing out a mistake in an earlier version. 
%, and to Efthymios Sofos for his help in fixing it. 
% in the statement of Theorem \ref{theo.equidis}.
This article was written while being a postdoc at Universiteit Leiden, whose financial support %and the warm atmosphere there are 
is gratefully acknowledged. %\todo{finish}

\numberwithin{equation}{section}

\section{Special values of zeta functions of Fermat surfaces}

In this section, we quickly review useful facts about zeta functions of surfaces over finite fields and conjectures about them. We also recall the definitions of Fermat surfaces and known results about their zeta functions.

Let $\F_q$ be a finite field of characteristic $p$, and let $\Scal$ be a smooth projective and geometrically irreducible surface over $\F_q$. We write $\bar{\Scal} = \Scal \times_{\F_q}\bar{\F_q}$, where $\bar{\F_q}$ is the algebraic closure of $\F_q$. For a  prime number $\ell\neq p$, denote by $\H^i(\Scal)$ the $i$-th $\ell$-adic étale cohomology space $\H^i_{\acute{e}t}(\bar{\Scal}, \Q_\ell)$.

The \emph{Néron-Severi group} $\NS(\Scal)$ is defined to be the image of $\Pic(\Scal)$ in $\NS(\bar{\Scal})$. Since the base field $\F_q$ is finite, another possible definition is $\NS(\Scal):=\NS(\bar{\Scal})^{\Gal(\bar{\F_q}/\F_q)}$, the ${\Gal(\bar{\F_q}/\F_q)}$-invariant subgroup of $\NS(\bar{\Scal})$ (see \cite[Prop. 6.2]{PTvL}).
The so-called ``Theorem of the base'' asserts that $\NS(\bar{\Scal})$, and therefore $\NS(\Scal)$, is a finitely generated abelian group (see %\cite[Chap. 6]{LFunDio}, 
\cite[Chap. V]{Milne_EtCoh}). The rank $\rho(\Scal/\F_q)$ of $\NS(\Scal)$ is called the Picard number of $\Scal$. 
Moreover, the Néron-Severi group is endowed with the intersection form $(-\cdot-)$, which is a nondegenerate bilinear pairing: if $D_1, \dots, D_\rho$ is a set of divisors on $\Scal$ whose classes generate $\NS(\Scal)$ modulo torsion, we define the \emph{regulator of $\Scal$} to be
\begin{equation}\label{eq.defi.Reg}
 \Reg(\Scal/\F_q):= \left| \det\big(D_i\cdot D_j \big)_{1\leq i,j\leq \rho}\right|\in\Z.
\end{equation}

The \emph{Brauer group of $\Scal$} can be defined in (at least) two ways: one can define $\Br(\Scal/\F_q)$ as the group of similarity classes of Azumaya algebras over $\Scal$ (see \cite{Groth_BrauerI}) which, when $\Scal$ is smooth and projective, is isomorphic to the ``cohomological'' Brauer group 
\begin{equation}\label{eq.defi.Br}
\Br(\Scal/\F_q):= \H^2_{\acute{e}t}(\Scal, \mathbb{G}_m) = \H^2_{\acute{e}t}(\Scal, \O_{\Scal}^\times),
\end{equation}
see \cite{Groth_BrauerII}, \cite[Chap. V]{Milne_EtCoh}. It is conjectured, but not known in general, that $\Br(\Scal/\F_q)$ is finite (see \cite{Tate_BSD}).

\subsection{Zeta functions of surfaces and their special values}\label{sec.AT}

\emph{A priori}, the zeta function of $\Scal$ is defined as the formal power series 
\[ Z(\Scal/\F_q, T):= \exp\left(\sum_{n=1}^{+\infty}\frac{|\Scal(\F_{q^n})|}{n}\cdot T^n\right)\]
where $|\Scal(\F_{q^n})|$ denotes the number of $\F_{q^n}$-rational points on $\Scal$. Since Weil's conjectures were proved by Deligne \cite{Deligne_Weil1}, it is known that %It turns out that 
$Z(\Scal/\F_q, T)$ is actually a rational function of $T$: more precisely, %the Weil conjectures (proved by Deligne) imply that 
 %there are polynomials $P_i \in\Z[T]$ such that %$Z(\Scal/\F_q, T)$ can be written as
\[ Z(\Scal/\F_q, T) = \frac{P_1(T)\cdot P_3(T)}{P_0(T)\cdot P_2(T)\cdot P_4(T)}, \]
with % $P_i(T)\in\Z[T]$ are polynomials 
$P_i(T) 
= \det\left(1-\fr^\ast \cdot T \  \big|\  \H^i(\Scal) \right),$
where $\fr^\ast$ is the endomorphism of $\H^i(\Scal)$ induced % induced on the $\ell$-adic cohomology spaces ($\ell \neq p$ prime) 
by the action of the geometric Frobenius on $\bar{\Scal}$. 
It is also known that the polynomials $P_i(T)$ have integral coefficients, are independent of the choice of $\ell\neq p$, and that their reciprocal roots are algebraic integers of absolute value $q^{i/2}$ in any complex embedding (the so-called Riemann hypothesis for $Z(\Scal/\F_q, T)$). For more details about these facts, the reader can consult \cite{Milne_EtCoh}. %\cite[Chap. V]{Milne_EtCoh}.

If $\Scal$ is geometrically irreducible, one has $P_0(T) = 1-T$ and $P_4(T) = 1-q^2T$. %\todo{is this § necessary ?}
Furthermore, by Poincaré duality, $P_3(T)=P_1(qT)$ and, as soon as $\Scal$ has a trivial Picard variety, $P_1(T) = P_3(T)=1$. 
Finally, any  nonsingular surface $\Scal$ of degree $d$ in $\P^3$ has % the Fermat surface $\ferm_d$ has 
\begin{equation}\label{eq.b2}
\deg P_2(T)= \dim \H^2(\Scal) 
= (d-1)(d^2-3d+3)+1.
\end{equation}

%The roots of $P_2(T)$ have absolute value $q^{-1}$
\paragraph{} 
Let us study the analytic behaviour of $P_2(\Scal/\F_q, T)$ at $T=q^{-1}$ (that is, the behaviour of $s \mapsto Z(\Scal/\F_q, q^{-s})$ at $s=1$).  
First, define the \emph{analytic rank} $\rho$ of $\Scal$
%$\rho_{an}(\Scal/\F_q)$
 to be the order of vanishing of $P_2(T)$ at $T=q^{-1}$. Secondly, define the \emph{special value at $T=q^{-1}$} to be 
\[ P_2^\ast(\Scal/\F_q, q^{-1}):= \left.\frac{P_2(\Scal/\F_q, T)}{(1-qT)^{\rho}}\right|_{T=q^{-1}}.
\]
By definition, $P_2^\ast(\Scal/\F_q, q^{-1})$ is a nonzero rational number, and the Riemann hypothesis for $P_2(\Scal/\F_q, T)$ implies that $P_2^\ast(\Scal/\F_q, q^{-1})$ is positive.

Inspired by the conjectures of Birch and Swinnerton-Dyer for elliptic curves, Tate and Artin-Tate conjectured that these quantities have an ``arithmetic'' interpretation: 

\begin{conj}[Artin-Tate]\label{conj.AT}
 Let $\Scal$ be a projective smooth and geometrically irreducible surface over a finite field $\F_q$, $p_g(\Scal)$ be the geometric genus of $\Scal$ and let $\rho(\Scal/\F_q)$ denote the rank of the Néron-Severi group $\NS(\Scal)$ of $\Scal$. Then
\begin{enumerate}[(1)]
\item The Néron-Severi group has rank $\displaystyle \rho(\Scal/\F_q) =  \ord_{T=q^{-1}}P_2(\Scal/\F_q, T),$
\item The Brauer group $\Br(\Scal)$ is finite,
\item The special value $P^\ast_2(\Scal/\F_q, q^{-1})$ admits the expression:
\begin{equation}\label{eq.conj.AT}
%P_2(\Scal/\F_q, q^{-s}) \sim \frac{|\Br(\Scal)|\cdot\vol(\NS(\Scal))}{q^{\alpha(\Scal)} \cdot |\NS(\Scal)\tors|^2} \cdot (1-q^{1-s})^{\rho(\Scal)} \qquad (\text{as } s\to 1),
P_2(\Scal/\F_q, q^{-1}) = \frac{|\Br(\Scal/\F_q)|\cdot\Reg(\Scal/\F_q)}{q^{p_g(\Scal)} \cdot |\NS(\Scal)\tors|^2} \cdot q^{\delta(\Scal)},
\end{equation}
where %$\alpha(\Scal) = p_g(\Scal) - \delta(\Scal)$ with 
$\delta(\Scal) = \dim \H^1(\Scal, \O_{\Scal}) - \dim \Pic(\Scal)$ is the ``defect of smoothness'' of the Picard variety of $\Scal$ (it is known that $\delta(\Scal)\leq p_g(\Scal)$, see \cite{Tate_BSD}).
\end{enumerate}
\end{conj}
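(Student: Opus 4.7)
The statement is formulated as a conjecture, so rather than claiming a proof in full generality, I would sketch the strategy by which it is established in the cases of interest, in particular for the Fermat surfaces $\ferm_d$ central to this paper. The guiding philosophy, due to Tate and refined by Milne, is that the three parts (1), (2), (3) form a package: under mild hypotheses, (1) implies (2) and (3), so the whole Artin-Tate conjecture reduces to the Tate conjecture for divisors on $\Scal$, namely the surjectivity of the cycle class map $\NS(\Scal)\otimes\Q_\ell \to \H^2(\Scal)(1)^{\Gal(\bar{\F_q}/\F_q)}$.

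For part (1), one inequality is formal: the cycle class map produces $\rho(\Scal/\F_q)$ independent Frobenius-invariant classes in $\H^2(\Scal)(1)$, which yields $\rho(\Scal/\F_q)\leq \ord_{T=q^{-1}}P_2(\Scal/\F_q, T)$. The reverse inequality is precisely the Tate conjecture combined with semisimplicity of $\fr^\ast$ on the $\Gal$-invariant part of $\H^2(\Scal)(1)$. For part (2), the Kummer sequence yields, after passing to the limit over $\ell^n$,
\[
0 \to \NS(\bar{\Scal})\otimes\Z_\ell \to \H^2(\bar{\Scal}, \Z_\ell(1)) \to T_\ell \Br(\bar{\Scal}) \to 0,
\]
and the Tate conjecture forces $T_\ell\Br(\Scal)\otimes\Q_\ell = 0$; together with a separate argument at the prime $p$ (using flat or crystalline cohomology) this gives finiteness of $\Br(\Scal)$.

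Part (3) carries the true arithmetic content. The plan is to compute $P_2^\ast(\Scal/\F_q, q^{-1})$ as a determinant of $1 - q^{-1}\fr^\ast$ restricted to a Frobenius-stable complement of the generalized eigenspace for eigenvalue $q$ inside $\H^2(\Scal)$. Taking $\Gal$-invariants and coinvariants of the Kummer sequence and carefully matching the various determinants identifies three ingredients: the regulator $\Reg(\Scal/\F_q)$ appears as the discriminant of the intersection pairing on $\NS(\Scal)$ modulo torsion; the order $|\Br(\Scal)|$ comes from the Galois cohomology of $T_\ell \Br(\bar{\Scal})$; and the factors $q^{p_g(\Scal) - \delta(\Scal)}$ and $|\NS(\Scal)\tors|^2$ arise from the Hodge-theoretic (respectively crystalline) comparison between $\ell$-adic and $p$-adic cohomology, accounting for the torsion in $\NS$ and the non-reducedness of the Picard scheme. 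Assembling these pieces yields the formula \eqref{eq.conj.AT}.

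The principal obstacle is, unambiguously, the Tate conjecture itself: the homological algebra outlined above has been carried out in full by Milne, so the whole Artin-Tate conjecture is known to be equivalent to the Tate conjecture for $\Scal$. For the Fermat surfaces $\ferm_d$ that concern the paper, one must therefore produce enough explicit algebraic cycles; this is achieved by Shioda via the rich symmetry group $(\mu_d)^4/\Delta$ acting on $\ferm_d$ and by dominating $\ferm_d$ by products of Fermat curves of degree $d$, an approach that realizes all Tate classes by divisors and thereby completes the verification of Conjecture \ref{conj.AT} in this setting.
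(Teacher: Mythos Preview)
Your treatment is correct and matches the paper's own handling of this statement: the paper does not prove Conjecture~\ref{conj.AT} either, but states it with references, records the Tate--Milne equivalence of parts (1)--(3), and then (in Theorem~\ref{theo.shioda.AT}) invokes exactly the Shioda--Katsura argument you describe---domination of $\ferm_d$ by a product of Fermat curves, together with Tate's theorem for products of curves---to conclude that the Tate conjecture, and hence the full Artin--Tate package, holds for Fermat surfaces. Your sketch of the cohomological mechanism behind (2) and (3) is in fact more detailed than what the paper supplies, which simply defers to \cite{Tate_BSD}, \cite{Tate_conj}, \cite{Milne_conjAT}.
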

Equality \eqref{eq.conj.AT} can also be written under the equivalent form of a Taylor expansion of $P_2(\Scal/\F_q, q^{-s})$ around $s=1$:
\[ P_2(\Scal/\F_q, q^{-s}) = \frac{|\Br(\Scal/\F_q)|\cdot\Reg(\Scal/\F_q) \cdot q^{\delta(\Scal)}}{q^{p_g(\Scal)} \cdot |\NS(\Scal)\tors|^2} \cdot (1-q^{1-s})^{\rho} + O\left((1-q^{1-s})^{\rho+1}\right) \quad (\text{as } s\to 1).
\]

We refer to \cite[Conjecture C]{Tate_BSD} for the original statement, and to \cite[§4]{Tate_conj} for a more general one. Tate and Milne have subsequently proved that parts (1) to (3) are equivalent (see \cite{Tate_conj}, \cite{Milne_conjAT}, or \cite{UlmerCRM} for a survey). 
The full Conjecture \ref{conj.AT} is known for certain surfaces over $\F_q$, among which Fermat surfaces (see below). 
%: %a number of surfaces (usually by proving that (1) holds): we mention 
%K3 surfaces (\aref{François Charles}), products of curves (\cite{Tate_conj}), Fermat ...\todo{finish}

\subsection{Fermat surfaces}
\label{sec.fermatsurf}

For any positive integer $d$ coprime to $p$, let $\ferm_d$ be the \emph{$d$-th Fermat surface} over $\F_q$, whose equation in $\P^3$ is
\begin{equation}\label{eq.fermatsurf}
 \ferm_d: \qquad X_0^d+X_1^d+X_2^d + X_3^d=0.
\end{equation}
Thus defined, $\ferm_d$ is a smooth, projective and geometrically irreducible surface (since $d$ is prime to $q$). These surfaces -- and their higher dimensional analogues -- have been studied in great detail, in particular by Shioda and his coauthors (see, in particular,  \cite{Shioda_Katsura}, \cite{Shioda_picardalgo}, \cite{Shioda_jacobi}). Let us recall the facts about $\ferm_d$ that are most relevant to our present goal. Like any nonsingular surface of degree $d$ in $\P^3$, the Fermat surface $\ferm_d$ has geometric genus
\[p_g(\ferm_d) = \frac{(d-1)(d-2)(d-3)}{6} = \binom{d-1}{3}.\]
In particular, $p_g(\ferm_d) \sim d^3/6$ as $d\to\infty$.
Furthermore, for any nonsingular surface $\Scal$ in $\P^3$, one knows that $\NS(\Scal)$ is torsion-free, and that the Picard variety $\Pic(\Scal)$ is trivial (so that, in particular, the defect of smoothness $\delta(\Scal)$ vanishes). See \cite{Shioda_Katsura}, \cite{Shioda_picardalgo}, \cite{Shioda_jacobi} and \cite{SSvL} for detailed geometric information on $\ferm_d$.

Crucial to our further study is the natural action of
$\Gamma_d:= \mu_d(\bar{\F_q})^4/\text{(diagonal)}\subset\mathrm{Aut}_{\bar{\F_q}}(\ferm_d)$ on $\ferm_d$ via 
\[
\forall \xx=[x_0:x_1:x_2:x_3]\in\ferm_d, %(\bar{\F_q}), 
\forall \zzeta=[\zeta_0:\zeta_1:\zeta_2:\zeta_3]\in\Gamma_d, \quad  
\zzeta \cdot \xx = [\zeta_0x_0: \zeta_1x_1: \zeta_2x_2:\zeta_3x_3 ].\]
Note that, when $d$ divides $|\F_q^\times| = q-1$, the action of $\Gamma_d$ on $\ferm_d$ is an action by $\F_q$-automorphisms (since then, all $d$-roots of unity in $\bar{\F_q}$ are $\F_q$-rational). In general though (\ie{} $d$ only assumed to be prime to $q$), the action of the $q$-th power Frobenius $\fr_q$ on $\Gamma_d$ is not trivial. Instead of $\Gamma_d$, we will rather study its character group $\hhat{\Gamma_d}$ % of $\Gamma_d$ 
under the following ``combinatorial'' incarnation:
\begin{equation}\label{eq.gammadhat}
G_d:= \left\{ \ba = (a_0, \dots, a_3)\in(\Z/d\Z)^4 \ \big| \ a_0 + \dots + a_3 \equiv 0 \bmod{d}\right\} \simeq \hhat{\Gamma_d}.
\end{equation}

\subsection{Zeta functions of Fermat surfaces} \label{sec.zeta.ferm}
\label{sec.notations}
  \paragraph{} The zeta function of Fermat surfaces have been explicitly computed by Weil in \cite{weil49} as evidence for his   conjectures. 
We recall his result in this subsection %below, % in modernized notations, 
but before we do so, we need to introduce a few more notations which will be in force for the rest of the paper.

\subsubsection[Action]{Action of $q$ on $G_d$, and the Teichmüller character} 
\label{subsec.notations}
 Let $p$ be a prime number; % and $\bar{\F_p}$ an algebraic closure $\F_p$: all finite fields of characteristic $p$ will be seen as subfields of $\bar{\F_p}$. 
 we fix, once and for all, an algebraic closure $\Qbar$ of $\Q$ (of which all number fields are seen as subfields) and a prime ideal $\gP$ above $p$ in the ring of integers $\bar{\Z}$ of $\Qbar$. The quotient field $\bar{\Z}/\gP$ is then an algebraic closure of $\F_p$: all the finite fields $\F_q$ of characteristic $p$ %(with $q$ a power of $p$) 
 involved in our computations will be seen as subfields of this algebraic closure. Let $\mu_{p'}\subset\Qbar$ be the group of roots of unity whose order is prime to $p$. The reduction map $\bar{\Z}\to\bar{\Z}/\gP =\bar{\F_p}$ induces an isomorphism $\mu_{p'} \to \bar{\F_p}^\times$. We denote by $\tt: \bar{\F_p}^\times\to\mu_{p'}\hookrightarrow \Qbar^\times$ the inverse of this isomorphism: we call $\tt$  the \emph{Teichmüller character}, and we also denote by $\tt$ the restrictions of $\tt$ to the various finite fields $\F_q\subset\bar{\F_p}$.

 \paragraph{} There is a natural action of $(\Z/d\Z)^\times$ on $G_d$ by component-wise multiplication: for all $t\in(\Z/d\Z)^\times$ and all $\ba=(a_0, \dots, a_3)\in G_d$, we let $t\cdot \ba:=(ta_0, \dots, ta_3)$. 
%Since $q$ is prime to $d$, 
 Since $q$ is prime to $d$, the subgroup $\sbgp{q}{d}\subset(\Z/d\Z)^\times$ generated by $q$ also acts on $G_d$ by multiplication. 
 
For any subset $\Lambda$ of $G_d$ which is stable under this action of $q$, we will denote by $\O_q(\Lambda)$ the set of orbits of $\Lambda$ under multiplication by $q$.
Given an orbit $\bA\in\O_q(\Lambda)$, we will often have to make a choice of a representative $\ba\in\Lambda$ of this orbit. To avoid repeating the sentence ``[...] where $\ba$ is a representative of the orbit $\bA$'', we will stick to the following convention: orbits in $\O_q(\Lambda)$ will always be denoted by an \emph{uppercase bold} letter ($\bA, \bB$,~...) and we denote by the corresponding \emph{lowercase bold} letter ($\ba$, $\bb$, ...) any choice of a representative in $\Lambda$ of that orbit.
%We often denotes orbits in $\O_q(d)$ by a bold uppercase letter. % (\eg{} $\mathbf{A}$). %If $\bA\in\O_q(d)$ (resp. $\bB$, ...), we denote by $\ba\in\Gamma_d$ (resp. $\bb$, ...) any representative of $\bA$ (resp. $\bB$, ...).
For any orbit $\bA\in\O_q(G_d)$, we let $|\bA|$ be the length of $\bA$; in other words, for any representative $\ba=(a_0, \dots, a_3)\in\bA$, one has $|\bA| = \big| \{\ba, q\cdot \ba, \dots, q^{n} \cdot \ba, \dots\}\big|$, or equivalently %Actually, 
\[ |\bA|  = \min\left\{n\in\Z_{\geq 1} \ \big|\ \forall i\in\{0,1,2,3\},\  q^n\cdot a_i \equiv a_i \bmod{d}\right\}.\]
For any $\ba\in G_d$, let $d_\ba:=d\big/\gcd(d, a_0, \dots, a_3)$: since $q$ is prime to $d$, it is easy to see that $\ba\mapsto d_\ba$ is constant along an orbit under multiplication by $q$, and %: we can thus safely write $d_\bA$ for any orbit $\bA$.
one checks that $|\bA| %= o_q(d_{\bA}) 
= o_q(d_{\ba})$ where, for any integer $n\geq 2$ prime to $q$, $o_q(n)$ denotes the (multiplicative) order of $q$ in $(\Z/n\Z)^\times$. For any power $q^v$ of $q$, by definition of the order, we see that $q^v\ba \equiv \ba \bmod{d}$ if and only if $o_q(\ba)$ divides $v$, \ie{} $\F_{q^v}$ is an extension of $\F_{q^{|\bA|}}$.

It is also easily checked that the pairing 
$(\ba, \zzeta)\in G_d \times\Gamma_d\mapsto 
\tt_\ba(\zzeta):= \tt(\zeta_0)^{a_0} \cdot\dotso\cdot\tt(\zeta_3)^{a_3}\in\Qbar^\times $
 induces the isomorphism  $\hhat{\Gamma_d} \simeq G_d$ alluded to in \eqref{eq.gammadhat}. Moreover, this isomorphism takes the $q$-th power Frobenius action on $\Gamma_d$ to the action of $q$ by multiplication on $G_d$ in the following sense:
\[\forall \ba\in G_d, \forall \zzeta =[\zeta_0:  \dots:\zeta_3]\in\Gamma_d, \quad  \tt_{\ba}(\fr_q (\zzeta))= \tt_{\ba}(\zeta_0^q, \dots, \zeta_3^q) = \tt_{q\cdot \ba}(\zeta_0, \dots, \zeta_3) = \tt_{q\cdot \ba}(\zzeta).\]

\subsubsection{Jacobi sums}
To state Weil's result in a convenient form, we need to introduce a specific `instantiation'' of Jacobi sums. We make the convention that characters $\chi: \F_q^\times \to\Qbar^\times$ are extended by $\chi(0)=0$ unless $\chi$ is the trivial character, in which case we put $\chi(0)=1$. Classical facts about characters of finite fields and Jacobi sums can be found in \cite{IR} and \cite{LidlN}.
\begin{defi}\label{defi.jacobi}
For all $\ba=(a_0, \dots, a_3)\in G_d\smallsetminus\{(0,0,0,0)\}$ and  all integers $s\in\Z_{\geq1}$, define the following characters on $\F_Q^\times$ where $Q=q^{s\cdot |\bA|}$:
\[\forall i\in\{0, 1,2,3\}, \ \cchi_{i}: \F_Q^\times \to \Qbar^\times, \ x \mapsto \left(\tt\circ\norm_{\F_Q/\F_{q^{|\bA|}}}(x)\right)^{(q^{|\bA|}-1)\cdot a_i/d}.
\]
One then defines a Jacobi sum (relative to $\F_Q$):
\[ \Ja_{Q}(\ba) = \frac{1}{Q-1}\sum_{\substack{x_0, \dots, x_3 \in\F_{Q}^\times\\ x_0 +\dots + x_3=0}}\cchi_0(x_0)\cchi_1(x_1)\cchi_2(x_2)\cchi_3(x_3).\]
%\[ \Ja_{Q}(\ba) = \frac{-1}{Q-1}\sum_{\substack{x_0, \dots, x_3 \in\F_{Q}\\ x_0 +\dots + x_3=0}}\cchi_0(x_0)\cchi_1(x_1)\cchi_2(x_2)\cchi_3(x_3).\]
In the case where $s=1$, %(\ie{} when $\F_Q=\F_{q^{|\bA|}}$), 
we denote $\Ja_{q^{|\bA|}}(\ba)$ by $\Ja(\ba)$ for short. By convention, let $\Ja(0,0,0,0)=q$.
\end{defi}

%Up to the $-1$ sign, 
This normalization of Jacobi sums is the same as that of Weil \cite{weil49} and of Shioda \cite{Shioda_jacobi}. %We refer to \cite{IR} for proofs of the folowing facts on Jacobi sums. 
Note that the characters $\cchi_i$ have order dividing $d$ (actually, the order of $\cchi_i$ can be seen to be $d/\gcd(d,a_i)$) and $\cchi_i$ is trivial if and only if $a_i=0$.
In particular, one can see that $\Ja_Q(\ba)$ is an algebraic integer in the cyclotomic field $\Q(\zeta_d)$.
The Galois group $\Gal(\Q(\zeta_d)/\Q)$ thus permutes the Jacobi sums: let us identify $\Gal(\Q(\zeta_d)/\Q)$ with $(\Z/d\Z)^\times$ in the usual manner (with $t\in(\Z/d\Z)^\times$ corresponding to $\sigma_t\in\Gal(\Q(\zeta_d)/\Q)$), then 
\begin{equation}\label{eq.actiongal.jacobi}
\forall \ba\in G_d, \forall t\in(\Z/d\Z)^\times, \quad \sigma_t\left(\Ja_Q(\ba)\right) = \Ja_Q(t\cdot \ba).
\end{equation}

Furthermore, it is well-known that $|\Ja_Q(\ba)| = Q$ if and only if  all $\cchi_i$ are non trivial and their product $\cchi_0\cdot\dotso\cdot\cchi_3$ is trivial: we are thus led to introduce 
\[G_d^\circ:=\left\{\ba=(a_0, \dots, a_3) \in (\Z/d\Z)^4 \ \big| \  \forall i, \ a_i\neq 0 \text{ and } \sum a_i =0 \right\}  = \left\{\ba \in G_d\ \big| \  \forall i, \ a_i\neq 0 \right\}.\]
% If $\ba\in G_d^\circ$, it
We note that the subset $G_d^\circ\subset G_d$ is stable under the action of $(\Z/d\Z)^\times$ and has cardinality $|G_d^\circ| = (d-1)(d^2-3d+3) = (d-1)^3-2d^2$. 
Also, we remark that $\Ja(q\cdot \ba) = \Ja(\ba)$, and more generally that $\Ja(p\cdot \ba) = \Ja(\ba)$. 
Finally, we recall the relation of Davenport-Hasse for Jacobi sums in the following form (see \cite{weil49}, \cite{IR}): 
\begin{equation}\label{eq.hassedav.jacobi}
\text{if $\ba\in G_d$  and $s\geq 1$ is an integer, then $\displaystyle \Ja_{q^{s|\bA|}}(\ba) = \left(\Ja_{q^{|\bA|}}(\ba)\right)^{s} = \Ja(\ba)^s$.}
\end{equation}

\subsubsection{Zeta functions of Fermat surfaces}

We have now enough notations to state the result of Weil alluded to earlier:

\begin{theo}[Weil]\label{theo.weil}
Let $\F_q$ be a finite field and $d\geq 2$ be an integer, coprime with $q$. Set $G_d^\circ \subset G_d$ as above. %:= \left\{\ba \in (\Z/d\Z)^4 \ \big| \  \forall i, \ a_i\neq 0 \text{ and } \sum a_i =0 \right\}$ as above. 
The Fermat surface $\ferm_d/\F_q$ defined by equation \eqref{eq.fermatsurf} has zeta function given by 
\begin{equation}\label{eq.zeta.ferm}
Z(\ferm_d/\F_q, T)  = \frac{1}{(1-T)\cdot P_2(\ferm_d/\F_q, T) \cdot (1-q^2T)}, \end{equation}
where, denoting by $\Ja({\ba})= \Ja_{q^{|\bA|}}(a_0, \dots, a_3)$ the Jacobi sum defined above, 
\begin{equation}\label{eq.P2.ferm}
P_2(\ferm_d/\F_q, T)= (1-qT)\cdot 
\prod_{\bA\in\O_q(G^\circ_d)} \left(1-\Ja(\ba)\cdot T^{|\bA|} \right). 
\end{equation}
\end{theo}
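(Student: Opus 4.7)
\textbf{Proof proposal for Theorem \ref{theo.weil}.}

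The approach, following Weil's original argument, is to compute $|\ferm_d(\F_{q^n})|$ for every $n \geq 1$ by expressing the affine point count as a sum of multiplicative character sums on $\F_{q^n}^\times$, identifying the surviving terms with Jacobi sums indexed by $G_d^\circ$, and then assembling the generating series $Z(\ferm_d/\F_q, T)$.

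Let $N_n$ be the number of $(x_0, \dots, x_3) \in \F_{q^n}^4$ satisfying $x_0^d + \dots + x_3^d = 0$; since every projective $\F_{q^n}$-point lifts to exactly $q^n - 1$ nonzero affine solutions, one has $|\ferm_d(\F_{q^n})| = (N_n - 1)/(q^n - 1)$. Using the classical identity $\#\{x \in \F_{q^n} : x^d = y\} = \sum_{\chi^d = 1} \chi(y)$ (with the conventions at $0$ recalled before Definition \ref{defi.jacobi}), one rewrites
\[
N_n = \sum_{(\chi_0, \dots, \chi_3)} S(\chi_0, \dots, \chi_3), \qquad S(\chi_0, \dots, \chi_3) := \sum_{\substack{y_0 + \dots + y_3 = 0 \\ y_i \in \F_{q^n}}} \prod_{i=0}^3 \chi_i(y_i),
\]
where the outer sum ranges over quadruples of characters of $\F_{q^n}^\times$ of order dividing $d$. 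A case analysis on how many $\chi_i$ are trivial shows that $S$ vanishes unless either all four are trivial (in which case $S = q^{3n}$) or all four are non-trivial with trivial product (in which case $S = (q^n - 1) \Ja_{q^n}(\ba)$ by the change of variable $y_i \mapsto c y_i$ for $c \in \F_{q^n}^\times$, together with the normalization of Definition \ref{defi.jacobi}). Under the Teichmüller parametrization of Section \ref{subsec.notations}, the characters of $\F_{q^n}^\times$ of order dividing $d$ correspond bijectively to the elements $\ba \in G_d$ with $q^n \cdot \ba \equiv \ba \pmod d$, i.e., those whose orbit size $|\bA|$ under multiplication by $q$ divides $n$. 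Combining the above and dividing by $q^n - 1$ yields
\[
|\ferm_d(\F_{q^n})| = 1 + q^n + q^{2n} + \sum_{\substack{\ba \in G_d^\circ \\ q^n \cdot \ba = \ba}} \Ja_{q^n}(\ba).
\]

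To package this into the zeta function, the Hasse-Davenport relation \eqref{eq.hassedav.jacobi} gives $\Ja_{q^n}(\ba) = \Ja(\ba)^{n/|\bA|}$ whenever $|\bA| \mid n$, and the $|\bA|$ elements in a single orbit contribute identically. Substituting into $\log Z(\ferm_d/\F_q, T) = \sum_{n \geq 1} |\ferm_d(\F_{q^n})| T^n / n$ and interchanging the order of summation in the orbit contribution (setting $n = m |\bA|$) yields
\[
\log Z(\ferm_d/\F_q, T) = -\log(1-T) - \log(1-qT) - \log(1-q^2 T) - \sum_{\bA \in \O_q(G_d^\circ)} \log\bigl(1 - \Ja(\ba) T^{|\bA|}\bigr),
\]
which after exponentiation gives exactly the rational expressions \eqref{eq.zeta.ferm}--\eqref{eq.P2.ferm}. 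The main delicate point in my view is the character-sum bookkeeping when $d \nmid q^n - 1$: the set of characters of $\F_{q^n}^\times$ of order dividing $d$ is only a proper subgroup of the ``expected'' character group, and one must verify carefully that the Teichmüller parametrization identifies it with the subset of $\ba \in G_d$ fixed by $q^n$, with associated Jacobi sums matching Definition \ref{defi.jacobi}'s normalization. This compatibility ultimately rests on the fact that $\tt$ intertwines norm maps among the finite subfields of $\bar{\F_p}$, but it is worth checking with care.
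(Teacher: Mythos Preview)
Your argument is correct and is precisely the ``point-counting'' proof that the paper itself only cites (to \cite{weil49} and \cite{IR}) without writing out; the paper gives no detailed proof of this theorem, mentioning the point-counting route and a cohomological alternative (\cite{Shioda_Katsura}, \cite{Katz_crys}) as references. Your handling of the delicate case $d\nmid q^n-1$---identifying the characters of $\F_{q^n}^\times$ of order dividing $d$ with the $q^n$-fixed elements of $\Z/d\Z$ via the Teichm\"uller lift, and then invoking Hasse--Davenport to collapse orbits---is exactly what is needed and is consistent with the paper's normalizations in Definition~\ref{defi.jacobi} and~\eqref{eq.hassedav.jacobi}.
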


There are at least two ways to obtain this expression: one is by a ``point-counting'' argument (see \cite{weil49} or \cite{IR}), another is via a more cohomological method (see \cite{Shioda_Katsura} or \cite[Coroll. 2.4]{Katz_crys}). 
% or \cite[§7]{Ulmer_LargeRk}). \todo{details ?}
Note that the ``usual'' setting for the proof is the hypothesis that $d$ divides $q-1$, which is insufficient for our use
% (in which case, the $d$-th roots of unity in $\bar{\F_q}$ are all $\F_q$-rational). 
since we need $d$ to be arbitrarily large with respect to a fixed $q$.
%the setting % we do need the weaker hypothesis that 
%(we work under the weaker assumption that ``$d$ prime to $q$''). % is necessary. 
This explains the appearance of the action of $q$ on the indexing set $G_d^\circ$ for the Jacobi sums: multiplication by $q$ on $G_d$ is the ``combinatorial'' version of the action of $\Gal(\bar{\F_q}/\F_q)$ on~$\Gamma_d$ (see section \ref{sec.fermatsurf}).

\begin{rema}If one considers the Fermat surface ``with coefficients'', that is to say the surface defined by \[\ferm'_d: c_0X_0^d + c_1X_1^d+ c_2X_2^d+ c_3X_3^d=0
\quad \text{ with } c_i\in\F_q^\times,\]
one can also give an explicit expression of the zeta function. The only change in \eqref{eq.zeta.ferm} is that  
\[P_2(\ferm'_d/\F_q, T) = (1-qT) \cdot
\prod_{\bA\in\O_q(G^\circ_d)} \left(1-\xi_{\bc}(\ba) \cdot\Ja(\ba)\cdot T^{|\bA|} \right),
\]
where $\xi_{\bc} (\ba) =\prod_{i=0}^3 \cchi_i(c_i^{-1})$ 
is a $d$-th root of unity in $\Qbar$. The reader can check that our arguments to bound $P_2^\ast(\ferm_d/\F_q, q^{-1})$ would work just as well for $\ferm'_d$ (uniformly in the choice of coefficients $c_i$).
\end{rema}

\subsection{Special values of zeta functions of Fermat surfaces}\label{sec.AT.ferm}
 
We now introduce $G_d^\ast:= \left\{\ba \in G_d^\circ \ \big| \ \Ja(\ba)\neq q^{|\bA|}\right\}$, the subset of $G_d^\circ$ parametrizing the factors ${1-\Ja(\ba) T^{|\bA|}}$ of $P_2(\ferm_d/\F_q, T)$ that don't vanish at $T=q^{-1}$: this set $G_d^\ast$ is obviously stable under the action of $q$; furthermore, a computation (as in Lemma \ref{lemm.expr.spval}) shows that the special value of $P_2(\ferm_d/\F_q, T)$ at $T=q^{-1}$ is:
\begin{equation}\label{eq.expr.spval.fermat}
P_2^\ast(\ferm_d/\F_q, q^{-1}) = \prod_{\ba\in \O_q(G_d^\circ\smallsetminus G_d^\ast)} |\bA| \cdot  \prod_{\bA\in\O_q(G_d^\ast)}\left(1-\frac{\Ja(\ba)}{q^{|\bA|}} \right).
\end{equation}

On the other hand, $P_2^\ast(\ferm_d/\F_q, q^{-1})$ also has an expression in terms of algebro-geometric invariants of $\ferm_d$, indeed:

\begin{theo}[Shioda]\label{theo.shioda.AT}
 Over $\F_q$, the Fermat surface $\ferm_d$ satisfies the Artin-Tate conjecture (Conjecture \ref{conj.AT}). In particular, its Brauer group $\Br(\ferm_d)$ is finite.
\end{theo}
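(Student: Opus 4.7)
The plan is to prove the Tate conjecture (part (1) of Conjecture \ref{conj.AT}) for $\ferm_d$; by the Tate--Milne equivalence recalled in Section \ref{sec.AT}, this will automatically yield the finiteness of $\Br(\ferm_d)$ and the special value formula (3). I will use crucially that $\ferm_d$ is a smooth surface in $\P^3$, so that $\NS(\ferm_d)$ is torsion-free, $\Pic(\ferm_d)$ is trivial, and $\delta(\ferm_d)=0$; these facts also simplify the formula in (3) to the clean form in \eqref{eq.intro.AT.ferm}.

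The easy inequality is straightforward. The $\ell$-adic cycle class map gives an injection
\[ \NS(\ferm_d)\otimes\Q_\ell \ \into\ \H^2(\ferm_d)(1)^{\Gal(\bar{\F_q}/\F_q)}, \]
and by the Riemann hypothesis the right-hand side has dimension equal to the multiplicity of the Frobenius eigenvalue $q$ on $\H^2(\ferm_d)$, i.e.\ to $\ord_{T=q^{-1}} P_2(\ferm_d/\F_q, T)$. Hence $\rho(\ferm_d/\F_q) \leq \ord_{T=q^{-1}} P_2(\ferm_d/\F_q, T)$, and by Weil's formula \eqref{eq.P2.ferm} the latter equals $1 + \#\{\bA\in\O_q(G_d^\circ) : \Ja(\ba)=q^{|\bA|}\}$.

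The main task is thus to produce enough algebraic cycles to match this count. I would follow Shioda--Katsura's strategy: use the $\Gamma_d$-action on $\ferm_d$ to split $\H^2(\ferm_d)$ into Frobenius-stable eigenspaces indexed by the orbits $\bA\in\O_q(G_d)$, where each piece has dimension $|\bA|$ and Frobenius acts (after suitable identification) by $\Ja(\ba)$. Those pieces contributing to the eigenvalue $q$ after Tate twist are exactly the $\bA$ with $\Ja(\ba)=q^{|\bA|}$, together with the class of a hyperplane section (corresponding to the factor $(1-qT)$). To exhibit algebraic cycles in each such piece, I would exploit Shioda's inductive structure: there is a dominant rational map $\pi\colon C_d\times C_d \dashrightarrow \ferm_d$ (where $C_d\subset\P^2$ is the Fermat curve of degree $d$), equivariant for the natural actions of $\mu_d$-type groups. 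Pulling back via $\pi$, the problem reduces to the Tate conjecture for divisors on the abelian surface $\Jac(C_d)\times\Jac(C_d)$, which is a theorem of Tate on endomorphisms of abelian varieties over finite fields. A Galois-equivariant decomposition of $\Jac(C_d)$ into components indexed by characters of $\mu_d$ then allows one to match each Frobenius eigenspace $V_{\bA}$ with $\Ja(\ba)=q^{|\bA|}$ to an algebraic cycle class coming from a product of such components.

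The main obstacle is the bookkeeping in step (c): one must verify that the rational map $\pi$, combined with the character decomposition, genuinely surjects onto every eigenspace of $\H^2(\ferm_d)(1)^{\Gal}$ with $\Ja(\ba)=q^{|\bA|}$, not just onto a proper subspace. This hinges on the combinatorial identification $\hhat{\Gamma_d}\simeq G_d$ from \eqref{eq.gammadhat} and on the Davenport--Hasse relation \eqref{eq.hassedav.jacobi}, which together ensure that the Frobenius eigenvalues on cohomology classes produced from $C_d\times C_d$ are precisely Jacobi sums of the expected shape. Once the surjection is established, combining it with the cycle class injection gives $\rho(\ferm_d/\F_q)=\ord_{T=q^{-1}}P_2(\ferm_d/\F_q,T)$, which by Tate--Milne implies finiteness of $\Br(\ferm_d)$ and, via $\delta(\ferm_d)=0$ and $|\NS(\ferm_d)\tors|=1$, the special value formula in the form of \eqref{eq.intro.AT.ferm}.
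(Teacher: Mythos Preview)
Your proposal is correct and follows essentially the same route as the paper's sketch: reduce to part (1) of Conjecture \ref{conj.AT}, invoke the dominant rational map from a product of Fermat curves to $\ferm_d$ constructed by Katsura--Shioda, and use Tate's theorem on endomorphisms of abelian varieties to handle the product of curves; Tate--Milne then gives the rest.

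One comment: you frame the ``main obstacle'' as checking, eigenspace by eigenspace, that the cycles coming from $C_d\times C_d$ surject onto each $V_{\bA}$ with $\Ja(\ba)=q^{|\bA|}$. This is more work than necessary. The descent of the Tate conjecture along a dominant rational map $\Scal\dashrightarrow\Scal'$ is a general fact (pullback on cohomology is injective and compatible with cycle classes, and correspondences push cycles forward), so once you know Tate holds for the product of curves and the map exists, you are done without any character-by-character bookkeeping. The eigenspace decomposition and the Davenport--Hasse relation are useful for computing $P_2$ explicitly (as in Theorem \ref{theo.weil}), but they are not needed to establish Theorem \ref{theo.shioda.AT} itself.
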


Shioda actually proved that  $\ferm_d/\F_q$ satisfies the Tate conjecture (part (1) of Conjecture \ref{conj.AT}). %, and the results cited above allow to conclude that $\ferm_d$ also satisfies the full Artin-Tate conjecture.
His proof relies on the following two facts: first, if one knows that (1) holds for a surface $\Scal$ and if there is a dominant rational map $\Scal \dashrightarrow \Scal'$, then (1) also holds for $\Scal'$.
Second, (1) is true for surfaces $\Scal$ that are products of curves $C_1\times C_2$ by a famous result of Tate \cite{Tate_conj}.
In particular, (1) is known for all surfaces that are dominated by products of curves (note that not all surfaces are of this form). 
Now Katsura and Shioda explicitly constructed a dominant rational map $C_1\times C_2\dashrightarrow \ferm_d$ from a product of Fermat curves to $\ferm_d$. 
For more details, see \cite{Shioda_Katsura}.

The veracity of Conjecture \ref{conj.AT} for $\ferm_d$ (in particular, of part (3) of Conjecture \ref{conj.AT}) yields the following expression for $P_2^\ast(\ferm_d/\F_q, q^{-1})$ 
%Putting together the results of the last two subsections, we arrive at 
(see Proposition 5.2 of \cite{Shioda_jacobi}):
\begin{prop}[Shioda]\label{prop.fermat.spval}
Let $\F_q$ be a finite field. %then % (of odd characteristic).
  For any integer $d\geq 2$ that is prime to $q$, one has:
\begin{equation}%\label{eq.expr.spval} 
P_2^\ast(\ferm_d/\F_q, q^{-1}) = \frac{|\Br(\ferm_d)|\cdot \Reg(\ferm_d)}{q^{p_g(\ferm_d)}}. \end{equation}
\end{prop}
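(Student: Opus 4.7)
The plan is to specialize the general Artin-Tate formula (part (3) of Conjecture \ref{conj.AT}) to the case of $\ferm_d$, using what is known about its geometric invariants. Since Theorem \ref{theo.shioda.AT} (due to Katsura and Shioda) asserts that $\ferm_d$ satisfies the full Artin-Tate conjecture, we may apply \eqref{eq.conj.AT} verbatim with $\Scal = \ferm_d$:
\[
P_2^\ast(\ferm_d/\F_q, q^{-1})
= \frac{|\Br(\ferm_d)|\cdot\Reg(\ferm_d)}{q^{p_g(\ferm_d)}\cdot|\NS(\ferm_d)\tors|^2}\cdot q^{\delta(\ferm_d)}.
\]
The goal is then to show that the two ``correction factors'' $|\NS(\ferm_d)\tors|^2$ and $q^{\delta(\ferm_d)}$ are both trivial for the Fermat surface.

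For the first factor, I would invoke the fact recalled in section \ref{sec.fermatsurf}: any smooth surface $\Scal\subset\P^3$ has torsion-free N\'eron--Severi group. Applied to $\ferm_d$, this gives $|\NS(\ferm_d)\tors|=1$, and so the denominator collapses to $q^{p_g(\ferm_d)}$. For the second factor, recall that $\delta(\Scal)=\dim\H^1(\Scal,\O_\Scal)-\dim\Pic(\Scal)$; for a smooth surface in $\P^3$ the Lefschetz hyperplane theorem (or the explicit computation of the Hodge numbers of a degree $d$ hypersurface) forces $\H^1(\ferm_d,\O_{\ferm_d})=0$, while the Picard variety of $\ferm_d$ is trivial, so both summands vanish. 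Thus $\delta(\ferm_d)=0$ and the factor $q^{\delta(\ferm_d)}$ is equal to $1$.

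Substituting these two simplifications into the Artin-Tate formula yields exactly the claimed identity
\[
P_2^\ast(\ferm_d/\F_q, q^{-1}) = \frac{|\Br(\ferm_d)|\cdot\Reg(\ferm_d)}{q^{p_g(\ferm_d)}}.
\]
There is no real obstacle here: the entire content of the proposition is the combination of Shioda's deep theorem with the two elementary geometric facts about smooth hypersurfaces in $\P^3$ (torsion-freeness of $\NS$ and vanishing of $H^1(\O)$ and of $\Pic^0$). The only ``work'' would be to collect precise references for these two geometric inputs from \cite{Shioda_Katsura, Shioda_jacobi, SSvL}, which were already cited in section \ref{sec.fermatsurf}.
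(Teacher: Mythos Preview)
Your proposal is correct and follows exactly the line of reasoning the paper indicates: the paper does not give a standalone proof of this proposition but simply notes that it follows from the validity of the Artin--Tate conjecture for $\ferm_d$ (Theorem \ref{theo.shioda.AT}) together with the geometric facts recalled in section \ref{sec.fermatsurf} (torsion-freeness of $\NS$ and triviality of the Picard variety, hence $\delta(\ferm_d)=0$), and refers to \cite[Prop.~5.2]{Shioda_jacobi}. Your write-up spells out precisely this specialization of \eqref{eq.conj.AT}.
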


\section{A more general setting}

With Proposition \ref{prop.fermat.spval} and the explicit expression \eqref{eq.expr.spval.fermat} for the special value $P_2^\ast(\ferm_d/\F_q, q^{-1})$ of the zeta function of the Fermat surfaces, the proof of our main theorem (Theorem \ref{theo.BS.FERMAT}) reduces to proving the asymptotic bound on the special value $P_2^\ast(\ferm_d/\F_q, q^{-1})$ mentioned in the introduction (Theorem \ref{theo.BOUND.SPVAL}).
Since we have other applications   in mind for these bounds on special values, and since it won't lead to too many technical complications, we will consider a slightly more general setting which we now describe.

\subsection[Polynomials and special values]{The polynomials $P(\Lambda, T)$}
\label{sec.setting}

Let $\F_q$ be  a finite field of characteristic $p$. As above, for any integer $d\geq 2$ that is prime to $p$, we let 
\[G_d = \big\{(a_0, \dots, a_3)\in(\Z/d\Z)^4 \ | \ \textstyle{\sum_{i=0}^3 a_i =0}\big\} \subset (\Z/d\Z)^4.\] 
Recall that $(\Z/d\Z)^\times$ (and thus its subgroup $\sbgp{q}{d}$) acts by multiplication on $G_d$. If $\Lambda \subset G_d$ is stable by multiplication by $q$, we denote by $\O_q(\Lambda)$ the set of orbits of $\Lambda$ under this action. We will be interested in the following class of polynomials:

\begin{defi}\label{defi.polyn.lambda}
% Fix a finite field $\F_q$ of odd characteristic $p$. Let $d\geq 2$ be an integer coprime with $q$, and  
Let $\Lambda$ be a  nonempty subset of $G_d$. We assume that $\Lambda$ 
is stable under the action of $(\Z/d\Z)^\times$ by multiplication. We then define the following polynomial 
\[ P(\Lambda, T) = \prod_{\bA \in \O_q(\Lambda)} \left( 1- \Ja(\ba) \cdot T^{|\bA|}\right) \in \Z[T],\]
where $\Ja(\ba) = \Ja_{q^{|\bA|}}(a_0, \dots, a_3)$ is the Jacobi sum defined above (Definition \ref{defi.jacobi}).
\end{defi}
Under the assumption that $\Lambda$ is $(\Z/d\Z)^\times$-stable, $P(\Lambda, T)$ indeed has integral coefficients (because the Jacobi sums are algebraic integers and the action of $\Gal(\Q(\zeta_d)/\Q)$ on $\{\Ja(\ba)\}_{\ba\in G_d}$ corresponds to the action of $(\Z/d\Z)^\times$ on $G_d$, see \eqref{eq.actiongal.jacobi}). Note that $P(\Lambda, T)$ implicitly depends on $q$, but we chose not to include it in the notation since $q$ is fixed. Besides, we remark that $\deg P(\Lambda, T) =\left|\left\{\ba\in\Lambda \ \big| \ \Ja_\ba\neq 0\right\}\right| \leq |\Lambda|$.

Now define the special value of $P(\Lambda, T)$ at $T=q^{-1}$ to be
\[ P^\ast(\Lambda):= \left. \frac{P(\Lambda, T)}{(1-qT)^{\rho}}\right|_{T=q^{-1}}, \text{ where } \rho = \ord_{T=q^{-1}}P(\Lambda, T).\]
 %is the maximal power of $(1-qT)$ dividing $P(\Lambda, T)$ in $\Z[T]$. 
In other words, if we set $P^\ast_\Lambda(T):= P(\Lambda, T)\cdot (1-qT)^{-\rho} \in\Z[T]$, % (so that $P^\ast_\Lambda(T)\in\Z[T]$), % a polynomial with integer coefficients)
then $P^\ast(\Lambda) = P^\ast_\Lambda(q^{-1})$. By construction, $P^\ast(\Lambda)$ is a nonzero element of $\Z[q^{-1}]\subset\Q$.

\subsection{Examples}

To justify our considering such objects, let us give a few examples of situations in which polynomials $P(\Lambda, T)$ naturally appear. 

\begin{exple}\label{ex.fermat.P2}
 As a first example, let us choose $\Lambda_\ferm = G_d$ for any integer $d\geq 2$ coprime with $q$. 
 For any $\ba=(a_0, a_1, a_2, a_3)\in G_d$, recall that 
\[ \Ja(\ba) = \begin{cases}
 q & \text{ if } \ba =(0,0,0,0), \\
0 & \text{ if some (but not all) of the $a_i$ are $0$}, \\ 
\text{of absolute value $q^{|\bA|}$}& \text{ if } \ba\in G_d^\circ.
 \end{cases}
\]
So that, from Weil's theorem (Theorem \ref{theo.weil}), we obtain
\[P(\Lambda_\ferm, T) = (1-qT)\cdot\prod_{\bA\in\O_q(G_d^\circ)}\left(1-\Ja(\ba)\cdot T^{|\bA|}\right) = P_2(\ferm_d/\F_q, T).\]
Notice that  $\Lambda_\ferm$ is of size $|\Lambda_\ferm| = |G_d| = d^3 \sim 6 \cdot p_g(\ferm_d)$ when $d\to\infty$. This example is actually the one to which we apply our main result (Corollary \ref{theo.BNDS.GEN}) here.
\end{exple}

\begin{exple} For an integer $d\geq 2$ coprime with $q$, consider a subgroup $H$ of $\Gamma_d = \mu_d(\bar{\F_q})^4/\text{(diagonal)}$ %\subset\mathrm{Aut}_{\bar{\F_q}}(\ferm_d)$ 
(see subsection \ref{sec.fermatsurf}). The finite group $\H$ acts on $\ferm_d$, and we let $\Scal:= \ferm_d/H$  be the quotient of the Fermat surface $\ferm_d$ by this action. 
The resulting surface $\Scal$ is defined over $\F_q$ and is normal (but not necessarily smooth). Now, let $\Lambda_H \subset G_d$ be the subgroup of $G_d$ isomorphic to $H^{\perp} = \big\{\chi\in\hhat{\Gamma_d} \ \big|\ \forall h\in H, \ \chi(h)=1 \big\}$ in the isomorphism $\hhat{\Gamma_d} \simeq G_d$. Being a subgroup of $G_d$, $\Lambda_H$ is clearly nonempty and stable under multiplication by $(\Z/d\Z)^\times$.
%if $H$ is a subgroup of $G_d$, we let $\Lambda = H\cap G_d^\circ$.
Denote by $P_2(\Scal/F_q, T)$ the characteristic polynomial of the Frobenius $\fr_q$ acting on $\H^2_{\acute{e}t}(\bar{S}, \Q_\ell)$. A computation very similar to that of \cite[§7]{Ulmer_LargeRk} leads to:
\[P_2(\Scal/\F_q, T) = P(\Lambda_H, T).\]
%\todo{finish}
\end{exple}

\begin{exple}% Let $\F_q$ be a finite field, 
As a special case, for any integer $d\geq 2$ prime to $q$, consider the subgroup $H \subset \Gamma_d$ %be the subgroup 
generated by $[\zeta^2:\zeta:1:1]$ and $[1:\zeta:\zeta^3:1]$, for $\zeta\in\mu_d(\bar{\F_q})$ a primitive $d$-th root of unity. Set $\Scal:= \ferm_d/H$ to be the quotient of the Fermat surface by the action of $H$. %The surface $\Scal$ is defined over $\F_q$ and is normal (but not necessarily smooth).

Ulmer  has proved a number of facts about $\Scal$, among which the identity 
$P_2(\Scal/\F_q, T) = P(\Lambda_H, T),$ (see \cite[§7]{Ulmer_LargeRk})
where
$\Lambda_H \subset G_d$ is associated to $H$ as in the example above. 
%(it's easy to see that $H$ is the subgroup generated by $\bb=(3,-6,2,1)\in G_d$). 
The polynomial $P_2(\Scal/\F_q, T)$ is closely related to the L-function of the elliptic curve $E_d/\F_q(t)$ given by $y^2+xy=x^3-t^d$ (see \cite{Ulmer_LargeRk} for details).
%, and $\Lambda_{\bb}:=H\cap G_d^\circ$ then 
%\[ P_2(\Scal/\F_q, T) = \prod_{\bA\in\O_q(\Lambda_{\bb})}\left(1-\Ja(\ba)\cdot T^{|\bA|}\right).\]
For this example, our upper and lower bounds on $P^\ast_2(\Scal/\F_q,q^{-1})$ have been proved by Hindry and Pacheco as the first example of a family of elliptic curves over $\F_q(t)$ unconditionally satisfying an analogue of the Brauer-Siegel theorem (see \cite[§7.4]{HP15}). There are now five more examples of such families, by \cite{GriffonPHD}.
%\todo{finish}
\end{exple}

\subsection{Two preliminary lemmas}

Let $\Lambda\subset G_d$ be a nonempty $(\Z/d\Z)^\times$-stable subset: %as above: 
in the lemma below, we give an explicit expression for the special value $P^\ast(\Lambda)$. % defined in section \ref{sec.setting}. % from the factorization of $P(\Lambda, T)$. 
First, let us introduce the following decomposition of $\Lambda$:
\[\Lambda = \Lambda_0 \sqcup \Lambda^\ast \sqcup 
\Lambda',\]
 where  
 $\Lambda_0 = \left\{\ba\in\Lambda \ \big| \ \Ja({\ba}) = q^{|\bA|}\right\} $,  
$\Lambda^\ast = \left\{\ba \in\Lambda \ \big | \ |\Ja({\ba})| = q^{|\bA|} \text{ but }\Ja({\ba})\neq q^{|\bA|}\right\}$, 
and 
$\Lambda' = \Lambda\smallsetminus (\Lambda_0\sqcup \Lambda^\ast 
)$. 
Each of these subsets of $\Lambda$ is stable under the action of $q$ since the value of $\Ja(\ba)$ does not depend on the choice of representative $\ba\in\bA$.
% in the orbit $\bA$).
Notice that, if $\bA\in\O_q(\Lambda')$ one has $\Ja({\ba}) = 0$ and that,
 if $(0,0,0,0)\in\Lambda$ then $(0,0,0,0)\in\Lambda_0$.
 % if $\bA\in\O_q(\Lambda')$ then $\Ja(\ba) =\Ja(0, 0, 0, 0)= q$ (this occurs at most once).  %since $|\Lambda'|\leq 1$). 
With this decomposition at hand, we can state:
%(when $\ba\neq 0$ or $\ba=0$ respectively, the latter possibility appearing at most once). 

\begin{lemm}\label{lemm.expr.spval} %Let $\Lambda\subset G_d$ be as above. Then t
Notations being as above, the multiplicity of $T=q^{-1}$ as a root of $P(\Lambda, T)$ is 
%:= \ord_{T=q^{-1}}P(\Lambda, T)=
$|\O_q(\Lambda_0)|$, and the special value $P^\ast(\Lambda)$ can be expressed as:
\begin{equation}\label{eq.expr.spval}
P^\ast(\Lambda) = P^\ast_\Lambda(q^{-1})
=%(1-q^2)^{|\Lambda'|} \cdot 
\prod_{\bA\in\O_q(\Lambda_0)} |\bA| \cdot \prod_{\bA\in\O_q(\Lambda^\ast)} \left(1-\frac{\Ja(\ba)}{q^{|\bA|}}\right).
\end{equation}
\end{lemm}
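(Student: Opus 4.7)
The approach is a direct computation, splitting the product defining $P(\Lambda, T)$ according to the decomposition $\Lambda = \Lambda_0 \sqcup \Lambda^\ast \sqcup \Lambda'$ and analyzing each type of factor's behaviour at $T = q^{-1}$. Before anything else, I would note that since $\Ja(q\cdot\ba) = \Ja(\ba)$, the three sets $\Lambda_0, \Lambda^\ast, \Lambda'$ are each stable under multiplication by $q$, so $\O_q(\Lambda) = \O_q(\Lambda_0)\sqcup\O_q(\Lambda^\ast)\sqcup\O_q(\Lambda')$ and we may factor
\[
P(\Lambda, T) = \prod_{\bA\in\O_q(\Lambda_0)}\bigl(1-q^{|\bA|}T^{|\bA|}\bigr)\cdot\prod_{\bA\in\O_q(\Lambda^\ast)}\bigl(1-\Ja(\ba)T^{|\bA|}\bigr)\cdot\prod_{\bA\in\O_q(\Lambda')}\bigl(1-\Ja(\ba)T^{|\bA|}\bigr).
\]

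Next I would dispose of the three types of factors one at a time. For $\bA\in\O_q(\Lambda')$, the remark recalled just before the lemma tells us that $\Ja(\ba)=0$, so such factors are identically $1$ and contribute nothing. For $\bA\in\O_q(\Lambda^\ast)$, the factor $1 - \Ja(\ba)T^{|\bA|}$ evaluated at $T=q^{-1}$ equals $1 - \Ja(\ba)/q^{|\bA|}$, which is nonzero since $\Ja(\ba)\neq q^{|\bA|}$ by definition of $\Lambda^\ast$. For $\bA\in\O_q(\Lambda_0)$, I would use the identity
\[
1 - q^{|\bA|}T^{|\bA|} = (1-qT)\cdot\bigl(1 + qT + \cdots + (qT)^{|\bA|-1}\bigr),
\]
and observe that the second factor is nonzero at $T=q^{-1}$, where it takes the value $|\bA|$. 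Equivalently, the only root of $1-q^{|\bA|}T^{|\bA|}$ that lies at $T=q^{-1}$ is a simple root.

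Combining these three observations, the order of vanishing of $P(\Lambda,T)$ at $T=q^{-1}$ is exactly $|\O_q(\Lambda_0)|$, proving the first assertion of the lemma. Dividing $P(\Lambda,T)$ by $(1-qT)^{|\O_q(\Lambda_0)|}$ and evaluating at $T=q^{-1}$ then yields
\[
P^\ast(\Lambda) = \prod_{\bA\in\O_q(\Lambda_0)} |\bA|\cdot \prod_{\bA\in\O_q(\Lambda^\ast)}\Bigl(1-\frac{\Ja(\ba)}{q^{|\bA|}}\Bigr),
\]
as claimed. There is no genuine obstacle here: the lemma is a bookkeeping statement, and the only point requiring care is verifying that $\Ja$ is constant on $q$-orbits (so that the tripartition of $\Lambda$ descends to $\O_q(\Lambda)$) and the factorisation of $1-q^nT^n$ identifying both the multiplicity of the pole and the value $|\bA|$ coming from each orbit in $\Lambda_0$.
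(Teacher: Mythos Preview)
Your proof is correct and follows essentially the same approach as the paper: decompose $\O_q(\Lambda)$ according to $\Lambda_0\sqcup\Lambda^\ast\sqcup\Lambda'$, observe that the $\Lambda'$ factors are trivial, the $\Lambda^\ast$ factors are nonzero at $T=q^{-1}$, and the $\Lambda_0$ factors contribute exactly one simple zero at $T=q^{-1}$ with residual value $|\bA|$ via the geometric series factorisation $1-(qT)^{|\bA|}=(1-qT)\sum_{j=0}^{|\bA|-1}(qT)^j$. The paper's writeup is slightly terser (it writes $\frac{1-(qT)^{|\bA|}}{1-qT}$ and simply evaluates), but the argument is identical.
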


\begin{proof}
For any orbit $\bA\in \O_q(\Lambda)$, we let $g_\bA(T):= 1-\Ja(\ba) T^{|\bA|}$ be the corresponding factor of $P(\Lambda, T)$. The polynomial $g_\bA(T)$ vanishes at $T=q^{-1}$ if and only if $\Ja(\ba)= q^{|\bA|}$, in which case $T=q^{-1}$ is a simple root of $g_{\bA}(T)$. 
%This means that $\rho:=\ord_{T=q^{-1}}P(\Lambda, T)$ is equal to $\sum_{\bA} \ord_{T=q^{-1}}g_\bA(T) = \left| \left\{\bA\in \O_q(\Lambda) \ \big| \ \Ja(\ba) = q^{|\bA|}\right\}\right| = |\O_q(\Lambda_0)|$, as claimed. 
This means that 
\[\rho:=\ord_{T=q^{-1}}P(\Lambda, T)=\sum_{\bA} \ord_{T=q^{-1}}g_\bA(T) = \left| \left\{\bA\in \O_q(\Lambda) \ \big| \ \Ja(\ba) = q^{|\bA|}\right\}\right| = |\O_q(\Lambda_0)|,\] as claimed. 
Now, by definition of $P^\ast_\Lambda(T)$ and by construction of the decomposition of $\Lambda$, it follows that 
\begin{align*}
P^\ast_\Lambda(T)
&= \frac{P(\Lambda, T)}{(1-qT)^\rho}
=\prod_{\bA\in\O_q(\Lambda_0)} \frac{1-\Ja(\ba) T^{|\bA|}}{1-qT} \cdot \prod_{\bA\in\O_q(\Lambda^\ast\sqcup \Lambda')} \left(1-\Ja(\ba) T^{|\bA|}\right) \\ 
&=\prod_{\bA\in\O_q(\Lambda_0)} \frac{1- (qT)^{|\bA|}}{1-qT} \cdot 
\prod_{\bA\in\O_q(\Lambda^\ast)} \left(1-\Ja(\ba) T^{|\bA|}\right). % \cdot (1-q^2)^{|\Lambda'|}%\prod_{\bA\in\O_q(\Lambda_{\neq})}\left(1-\Ja_\ba \cdot T^{|\bA|}\right)  \\
\end{align*}
Evaluating this expression at $T=q^{-1}$ yields the result. %because orbits $\bA\in\O_q(\Lambda'')$ contribute $1$ to the product. 
\ProofEnd \end{proof}

In the following lemma, we record a few useful facts about the action of $q$ on subsets of $G_d$.

\begin{lemm}\label{lemm.prel.actq}
 Let $\Lambda\subset G_d$ be a nonempty subset which is stable under the action of $(\Z/d\Z)^\times$ by multiplication. %We define $\O_q(\Lambda)$ as above.  
 Then the following upper bounds hold:
\begin{enumerate}[(i)]
\item \label{item.i.actq} 
$ \displaystyle \sum_{\bA\in\O_q(\Lambda)}  |\bA| = |\Lambda| \leq |\Lambda|$,
\item \label{item.ii.actq} 
$ \displaystyle \sum_{\bA\in\O_q(\Lambda)}  1 =  |\O_q(\Lambda)| \ll \log q\cdot \frac{|\Lambda|}{\log |\Lambda|}$,
\item \label{item.iii.actq} 
$ \displaystyle \sum_{\bA\in\O_q(\Lambda)}  \log |\bA|  \ll \log q\cdot \frac{|\Lambda|\cdot \log\log |\Lambda|}{\log |\Lambda|}$.
\end{enumerate}
%All the involved constants depend at most on $q$ and are effective.
All the involved constants are absolute and effective.
\end{lemm}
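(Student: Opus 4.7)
The plan is to prove each bound in turn, with (i) being immediate and (ii), (iii) relying on a single counting lemma about fixed points of the action of powers of $q$ on $G_d$.

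\textbf{Part (i).} This is immediate: $\Lambda$ is partitioned into its orbits under multiplication by $q$, so the sum of orbit lengths equals $|\Lambda|$. The inequality is in fact an equality.

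\textbf{Key counting step.} For any integer $L \geq 1$, I would bound the number of $\ba \in G_d$ satisfying $q^L\ba = \ba$, equivalently, those whose orbit length divides $L$. For each such $\ba = (a_0,a_1,a_2,a_3)$, each component satisfies $(q^L-1)a_i \equiv 0 \pmod{d}$, so $a_i$ belongs to the kernel of multiplication by $q^L-1$ on $\Z/d\Z$, which has exactly $\gcd(d, q^L-1) \leq q^L - 1 < q^L$ elements. The constraint $\sum a_i \equiv 0 \pmod d$ eliminates one degree of freedom, giving strictly fewer than $q^{3L}$ such elements of $G_d$. In particular, the number of orbits of length at most $L$ contained in $\Lambda$ is bounded by the number of such elements, hence is $< q^{3L}$.

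\textbf{Part (ii).} Split the sum according to a threshold $L$ (to be chosen). Orbits of length $\leq L$ contribute at most $q^{3L}$ to $|\O_q(\Lambda)|$ by the counting step. For orbits of length $> L$, I use part (i): the number of such orbits is at most $|\Lambda|/L$. Hence
\[ |\O_q(\Lambda)| \leq q^{3L} + \frac{|\Lambda|}{L}. \]
Choosing $L = \bigl\lfloor \frac{\log |\Lambda|}{6\log q} \bigr\rfloor$ gives $q^{3L} \leq |\Lambda|^{1/2}$, which is negligible compared to $|\Lambda|/\log|\Lambda|$, while $|\Lambda|/L \ll \log q \cdot |\Lambda|/\log|\Lambda|$ with an absolute implied constant. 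This yields the claimed bound.

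\textbf{Part (iii).} Use the same threshold splitting. For orbits of length $\leq L$, the sum is bounded by $(\log L) \cdot q^{3L}$, which with the same choice $L \asymp \log|\Lambda|/\log q$ is again negligible. For orbits of length $> L$, I exploit that the function $x \mapsto (\log x)/x$ is decreasing for $x \geq e$, so $\log|\bA|/|\bA| \leq (\log L)/L$ whenever $|\bA| > L$. Therefore
\[ \sum_{|\bA| > L} \log|\bA| \leq \frac{\log L}{L} \sum_{|\bA| > L} |\bA| \leq \frac{\log L}{L}\,|\Lambda|, \]
and with the choice of $L$ above, $(\log L)/L \ll \log q \cdot \log\log|\Lambda|/\log|\Lambda|$, giving exactly the bound claimed in (iii). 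The ``main obstacle'' is really only the bookkeeping in choosing $L$; the extra $\log\log|\Lambda|$ factor in (iii) compared with (ii) is forced by the $\log|\bA|$ weight, but is recovered by monotonicity of $(\log x)/x$ rather than by the crude bound $\log|\bA| \leq \log|\Lambda|$ (which would only give $\log q \cdot |\Lambda|$, insufficient for the applications).
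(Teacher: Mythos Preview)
Your approach is correct in spirit and takes a genuinely different route from the paper, but there is one small slip in the counting step. You assert that the number of orbits of length at most $L$ is bounded by the number of $\ba\in G_d$ with $q^L\ba=\ba$; however, $q^L\ba=\ba$ is equivalent to the orbit length \emph{dividing} $L$, not to its being $\leq L$, so an orbit of length $m\leq L$ with $m\nmid L$ is not counted among your ``such elements''. The fix is immediate: the number of elements of $G_d$ lying in an orbit of length at most $L$ is at most
\[
\sum_{m=1}^{L}\bigl|\{\ba\in G_d:q^m\ba=\ba\}\bigr|<\sum_{m=1}^{L}q^{3m}\leq 2\,q^{3L},
\]
and the rest of your argument (for both (ii) and (iii)) goes through with a harmless extra constant.

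For comparison, the paper does not use a fixed-point count at all. It decomposes $\Lambda=\bigsqcup_{d'\mid d}\Lambda_{d'}$ according to $d'=d/\gcd(d,a_0,\dots,a_3)$, observes that every orbit in $\Lambda_{d'}$ has length exactly $o_q(d')$ so that $|\O_q(\Lambda_{d'})|=|\Lambda_{d'}|/o_q(d')$, invokes $o_q(d')\geq\log d'/\log q$ and $|\Lambda_{d'}|\leq d'^3$, and then splits the sum over divisors $d'$ at a threshold $X=|\Lambda|^{1/4}$ (respectively $Y=\log|\Lambda|/(c\log q)$ for part (iii)). Your argument bypasses the arithmetic of the multiplicative orders $o_q(d')$ entirely and is more direct; the paper's decomposition, on the other hand, makes the role of these orders explicit, which is in keeping with how they reappear later in the paper (e.g.\ in the rank estimates).
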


\begin{proof} The first assertion follows directly from the fact that the set $\O_q(\Lambda)$ can be written as a disjoint union of orbits under the action $q$.
To prove part \eqref{item.ii.actq}, we introduce the following notation: for any divisor $d'$ of $d$, we let ${\Lambda_{d'}:= \left\{\ba=(a_0, \dots, a_3)\in\Lambda \ \big| \ \gcd(d, a_0, \dots, a_3)=d/d' \right\}}$. 
It is clear that $\Lambda = \bigsqcup_{d'\mid d} \Lambda_{d'}$ and that the action of $q$ on $\Lambda$ can be restricted to each $\Lambda_{d'}$, yielding the decomposition $\O_q(\Lambda) = \bigsqcup_{d'\mid d} \O_q(\Lambda_{d'})$. We remark that $\Lambda_{1} \subset \{(0,0,0,0)\}$ and $|\O_q(\Lambda_1)|\leq1$: in what follows, we thus concentrate on divisors $d'\geq 2$ of $d$.
Any orbit $|\bA|$ containing an element $\ba\in\Lambda_{d'}$  has length $|\bA| = |\sbgp{q}{d'}| = o_q(d')$: % (which depends only on~$d'$ and not on the particular $\ba\in\Lambda_{d'}$): 
this implies that $|\O_q(\Lambda_{d'})| = |\Lambda_{d'}|/o_q(d')$. We also note that, by definition of $o_q(d')$, $d'$ divides $q^{o_q(d')}-1$ so that $\log d' \leq o_q(d') \cdot  \log q$. Putting these remarks together, we see that
\[ |\O_q(\Lambda)| 
= \sum_{d'\mid d} |\O_q(\Lambda_{d'})| 
\leq 1+ \sum_{\substack{d'\mid d \\ d'\geq 2}} \frac{|\Lambda_{d'}|}{o_q(d')}
\leq 1+\log q \cdot \sum_{\substack{d'\mid d\\ d'\geq 2}}  \frac{|\Lambda_{d'}|}{\log d'}.\]
Let $X\in [2,d]$ be a parameter, 
we split the last sum into two parts, which we will estimate separately:
\[ \sum_{\substack{d'\mid d\\ d'\geq 2}}  \frac{|\Lambda_{d'}|}{\log d'}
 = \sum_{\substack{d'\mid d\\ 2\leq d' \leq  X}}  \frac{|\Lambda_{d'}|}{\log d'}
 +
 \sum_{\substack{d'\mid d\\ d'>X}}  \frac{|\Lambda_{d'}|}{\log d'}.\]
To bound the first sum, we remark that for each $d'\mid d$ (with $d'\geq 2$), $\Lambda_{d'}$ is in one-to-one correspondence with a subset of $(\Z/d'\Z)^3$ so that $|\Lambda_{d'}| \leq {d'}^3$. Since the function $y\mapsto y^3/\log y$ is increasing on $[2,X]$, the first sum satisfies
\[ \sum_{\substack{d'\mid d\\ 2\leq d' \leq  X}}  \frac{|\Lambda_{d'}|}{\log d'}
\leq \sum_{\substack{d'\mid d\\ 2\leq d' \leq  X}}  \frac{{d'}^3}{\log d'}
\leq \frac{X^3}{\log X}\cdot \sum_{\substack{d'\mid d\\ 2\leq d' \leq  X}} 1
\leq \frac{X^3}{\log X}\cdot \sum_{{2\leq d' \leq  X}} 1 \leq \frac{X^4}{\log X}. \]
To treat the second sum, we use that $y\mapsto (\log y)^{-1} $ is decreasing on $[X, +\infty[$ and the decomposition ${\Lambda= \bigsqcup_{d'\mid d}\Lambda_{d'}}$:
\[ \sum_{\substack{d'\mid d\\ d' >  X}}  \frac{|\Lambda_{d'}|}{\log d'}
\leq \sum_{\substack{d'\mid d\\ 2\leq d' \leq  X}}  \frac{|\Lambda_{d'}|}{\log X}
\leq \frac{1}{\log X} \sum_{d'\mid d} |\Lambda_{d'}| 
\leq \frac{|\Lambda_d|}{\log X}. \]
Summing the two contributions and choosing $X=|\Lambda|^{1/4}$ leads to 
\[ |\O_q(\Lambda)| \leq 1+ \log q \cdot \frac{X^4 + |\Lambda|}{\log X}  \leq 1+ 8\log q\cdot \frac{|\Lambda|}{\log |\Lambda|} \ll \log q\cdot  \frac{|\Lambda|}{\log |\Lambda|}. \] 
This proves part \eqref{item.ii.actq} of the Lemma (with a hidden absolute constant $c_5\leq 9$). 
%The proof of item \eqref{item.iii.actq} uses the same ideas and is very similar to that of \eqref{item.ii.actq}; we do not give the details here.
We finally turn to the proof of part \eqref{item.iii.actq}; again, we use the decomposition of $\O_q(\Lambda)$ as the disjoint union of $\O_q(\Lambda_{d'})$ (with $d'\mid d$) and the fact that the orbits of $\ba\in\Lambda_{d'}$ all have the same length $o_q(d')$: %, $d'\geq 2$):%
\[\sum_{\bA\in\O_q(\Lambda)} \log|\bA| 
= \sum_{\substack{d'\mid d \\ d'\geq 2}} \sum_{\bA \in \O_q(\Lambda_{d'})}\log |\bA|
= \sum_{\substack{d'\mid d \\ d'\geq 2}} |\O_q(\Lambda_{d'})|\cdot \log o_q(d')
= \sum_{\substack{d'\mid d \\ d'\geq 2}} \frac{|\Lambda_{d'}|}{o_q(d')}\cdot \log o_q(d').
\]
We introduce a new parameter $Y\in[3,d]$ and we split the sum into two parts according to the size of $d'$ with respect to $Y$. To bound the sum over ``small divisors'' $d'$ of $d$ (\ie{} $d'\mid d$ and $2\leq d'\leq Y$), we use that $\log o_q(d')\leq \log \phi(d')\leq \log Y$ and 
\[ \sum_{\substack{d'\mid d \\ 2\leq d'\leq Y}} \frac{|\Lambda_{d'}|}{o_q(d')}\cdot \log o_q(d')
\leq \log Y \sum_{\substack{d'\mid d \\ d'\geq 2}} \frac{|\Lambda_{d'}|}{o_q(d')} 
= \log Y \sum_{\substack{d'\mid d \\ d'\geq 2}} |\O_q(\Lambda_{d'})|
\leq %\log Y \sum_{{d'\mid d}} |\O_q(\Lambda_{d'})|= 
|\O_q(\Lambda)| \cdot \log Y.
\]
The sum over ``big divisors'' of $d$ (those $d'\mid d$ such that $d'\geq Y$) can be bounded from above by remarking that $y\mapsto (\log y)/y$ is decreasing on $[3, +\infty[$: 
\[ \sum_{\substack{d'\mid d \\ d'> Y}} \frac{|\Lambda_{d'}|}{o_q(d')}\cdot \log o_q(d')
\leq \sum_{\substack{d'\mid d \\ d'> Y}} \frac{\log o_q(d')}{o_q(d')} \cdot |\Lambda_{d'}|
\leq  \frac{\log Y}{Y}  \sum_{\substack{d'\mid d \\ d'> Y}}|\Lambda_{d'}| 
\leq  \frac{\log Y}{Y}  \sum_{d'\mid d }|\Lambda_{d'}|
= \frac{\log Y}{Y}  |\Lambda|.
\]
From part \eqref{item.ii.actq} that we have just proved, we deduce that:
\[\sum_{\bA\in\O_q(\Lambda)} \log|\bA| 
=  \sum_{\substack{d'\mid d \\ d'\geq 2}} \frac{|\Lambda_{d'}|}{o_q(d')}\cdot \log o_q(d')
\leq |\O_q(\Lambda)| \cdot \log Y +  \frac{\log Y}{Y}\cdot |\Lambda|
\leq |\Lambda|\log Y \cdot\left( \frac{c_5 \log q}{\log|\Lambda|} + \frac{1}{Y}\right).
\]
On taking $Y= \log |\Lambda|/(c_5\log q)$, we get 
the desired inequality (with an absolute constant $c_6\leq 18$).
\ProofEnd \end{proof}

\section{An equidistribution statement}\label{sec.equidis}

\paragraph{} Let us temporarily turn to a more combinatorial problem %forget temporarily about surfaces over finite fields 
and consider subsets of $\Z/d\Z$. The fractional part of $x\in\R$ will be denoted by $\partfrac{x}$. The map $m\in\Z/d\Z\mapsto \partfrac{{m}/d}$ % (where $\widetilde{m}$ is any lift to $\Z$ of $m$) 
allows us to view subsets of $\Z/d\Z$ as sequences in $[0,1]$: we may then study the distribution of subsets $H_d$ of $\Z/d\Z$ when $d$ grows. Of course, given a random set $H_d\subset \Z/d\Z$, there is no reason why $H_d$ should be particularly well-distributed in $\Z/d\Z$. 
Nonetheless, if $H_d$ is a subset of $(\Z/d\Z)^\times$ that is ``not too small'', we show that its ``translates'' $g\cdot H_d$ (by $g\in(\Z/d\Z)^\times$) are, on average, equidistributed in $\Z/d\Z$. More precisely:

\begin{theo} \label{theo.equidis}
%\note{Corrected version}
%Let $F:[0,1]\to\R$ be a $\mathcal{C}^1$ function, and define  $\|F'\|_1 = \int_0^1 |F'(t)|\dd t$.
Let $F:[0,1]\to\R$ be a function of bounded variation, and denote by $\mathscr{V}(F)$ its total variation.
Let $\mathscr{D}\subset\Z_{\geq 1}$ be an infinite set of positive integers.
For all $d\in\mathscr{D}$, suppose we are given a subset $H_d$ of $(\Z/d\Z)^\times$ and an element $a_d\in(\Z/d\Z)^\times$; assume that 
\[ |H_d| / \log\log d \xrightarrow[\substack{ d\in\mathscr{D}\\ d\to\infty}]{} +\infty. \]
Then, for all $\epsilon\in(0, 1/4)$, when $d\in\mathscr{D}$ goes to $+\infty$, one has 
\begin{equation}\label{eq.equidis}
\frac{1}{\phi(d)} \sum_{g\in(\Z/d\Z)^\times} 
\left| \int_{0}^1 F(t) \dd t  - \frac{1}{|H_d|} \sum_{h\in H_d} F\left(\partfrac{\frac{a_d\cdot gh}{d}}\right) \right|  
\leq c_7 \cdot \mathscr{V}(F) \cdot \left(\frac{\log\log d}{|H_d|}\right)^{1/4 - \epsilon}. % = o(1).
\end{equation}
The constant $c_7>0$ is effective and depends at most on $\epsilon>0$. In particular, note that this upper bound is entirely independent of the choice of $a_d$. 
\end{theo}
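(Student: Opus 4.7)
The statement asserts equidistribution, on average over $g \in (\Z/d\Z)^\times$, of the sequences $(\{a_d g h/d\})_{h \in H_d}$ when $H_d$ is any sufficiently large subset of $(\Z/d\Z)^\times$. My plan follows the classical strategy for results of this type: combine Koksma's inequality (to handle the bounded variation function $F$), the Erdos--Turan inequality (to reduce discrepancy to exponential sums), a Cauchy-Schwarz step (to pass from $L^1$ to $L^2$ averages), and bounds on Ramanujan sums.

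First, Koksma's inequality bounds the left-hand side of \eqref{eq.equidis} by $\mathscr{V}(F)$ times the average star-discrepancy $\frac{1}{\phi(d)}\sum_g D^\ast_{|H_d|}(g)$ of the sequences $(\{a_d gh/d\})_{h \in H_d}$. The Erdos--Turan inequality then gives, for any $M \geq 1$,
\[
D^\ast_{|H_d|}(g) \ll \frac{1}{M} + \sum_{m=1}^M \frac{|T_m(g)|}{m},
\qquad T_m(g) := \frac{1}{|H_d|}\sum_{h \in H_d} e^{2\pi i m a_d g h/d}.
\]
Averaging over $g$ and applying Cauchy-Schwarz term by term reduces the problem to estimating the second moment
\[
\frac{1}{\phi(d)} \sum_g |T_m(g)|^2 = \frac{1}{|H_d|^2 \phi(d)} \sum_{h_1, h_2 \in H_d} c_d\bigl(m a_d(h_1-h_2)\bigr),
\]
where $c_d(k) := \sum_{g \in (\Z/d\Z)^\times} e^{2\pi i kg/d}$ is the Ramanujan sum, which satisfies $|c_d(k)| \leq \gcd(d,k)$.

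Since $a_d$ is a unit modulo $d$, one has $\gcd(d, m a_d(h_1-h_2)) = \gcd(d, m(h_1-h_2))$, so the second moment is controlled by the combinatorial quantity $\Sigma_m := \sum_{h_1, h_2 \in H_d}\gcd(d, m(h_1-h_2))$. The diagonal contribution ($h_1 = h_2$) equals $|H_d|\cdot d$, producing the expected main term of order $d/(\phi(d)|H_d|) \ll \log\log d/|H_d|$, via the classical bound $d/\phi(d) \ll \log\log d$. For the off-diagonal contribution I would use the identity $\gcd(d,k) = \sum_{e \mid \gcd(d,k)} \phi(e)$ and count pairs of elements of $H_d$ lying in the same residue class modulo each divisor $e$ of $d$, exploiting that $H_d \subset (\Z/d\Z)^\times$ implies $|H_d \cap (r + e\Z)| = 0$ whenever $\gcd(r, e) > 1$, together with the trivial upper bound $|H_d \cap (r + e\Z)| \leq d/e$.

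Finally, summing the resulting bounds over $m \leq M$ and optimizing the truncation $M$ as a small power of $|H_d|/\log\log d$ yields the exponent $1/4 - \epsilon$ in \eqref{eq.equidis}. The $\epsilon > 0$ absorbs slowly-growing divisor-type factors such as $\tau(d)$ or $2^{\omega(d)} = d^{o(1)}$ that arise both from the off-diagonal gcd estimates and from the summation over $m$. The main technical obstacle is precisely this off-diagonal estimate: since we assume essentially no arithmetic structure on $H_d$ beyond $H_d \subset (\Z/d\Z)^\times$, and since the hypothesis $|H_d|/\log\log d \to \infty$ is very weak, only the general bound $|c_d(k)| \leq \gcd(d,k)$ is available; it is this limitation that forces the loss from the ``natural'' exponent $1/2$ (which one would expect for fully equidistributed sequences) down to $1/4 - \epsilon$. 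The uniformity in $a_d$ is transparent from the argument, since $a_d$ is absorbed after averaging over $g \in (\Z/d\Z)^\times$.
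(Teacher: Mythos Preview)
Your overall plan---Koksma, then Erd\H{o}s--Tur\'an, then a second-moment bound for the exponential sums averaged over $g$---is sound, and it differs from the paper's route. The paper does \emph{not} use Cauchy--Schwarz at this step: it bounds $\frac{1}{\phi(d)}\sum_g|\hhat{\psi_d}(kg)|$ directly by a level-set argument (split according to whether $|\hhat{\psi_d}(kg)|\leq\beta$ or $>\beta$, then use Plancherel to count the large values), obtaining a bound $\beta + O\big(\gcd(k,d)\,\beta^{-2}f(d)\big)$ with $f(d)=\log\log d/|H_d|$. Summing over $k\leq K$ via $\gcd(k,d)/k\leq 1$ and optimising $K,\beta$ yields $f(d)^{1/4-\epsilon}$.

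There is, however, a genuine gap in your execution. You assert that the divisor-type factors $\tau(d)$ or $2^{\omega(d)}=d^{o(1)}$ coming from your off-diagonal gcd estimate can be absorbed into the $\epsilon$. They cannot: the target bound is a power of $f(d)=\log\log d/|H_d|$, and the only hypothesis is $f(d)\to 0$. Since $|H_d|$ may be as small as $(\log\log d)^2$, one can have $f(d)^{-\epsilon}\asymp(\log\log d)^{\epsilon}$ while $2^{\omega(d)}$ is as large as $\exp\big(c\,\log d/\log\log d\big)$; no choice of $\epsilon$ controls that. The fix is to bypass Ramanujan sums altogether: extend the $g$-sum to all of $\Z/d\Z$ by positivity \emph{before} opening the square. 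Orthogonality then gives
\[
\sum_{g\in\Z/d\Z}|T_m(g)|^2 \;=\; \frac{d}{|H_d|^2}\,\big|\{(h_1,h_2)\in H_d^2 : (d/\gcd(m,d))\mid h_1-h_2\}\big| \;\leq\; \frac{d\,\gcd(m,d)}{|H_d|},
\]
hence $\frac{1}{\phi(d)}\sum_g|T_m(g)|^2\ll \gcd(m,d)\,f(d)$ with an absolute constant and no $d^{o(1)}$ loss. Cauchy--Schwarz and the crude bound $\gcd(m,d)^{1/2}/m\leq m^{-1/2}$ then give a contribution $\ll f(d)^{1/2}M^{1/2}$ to the Erd\H{o}s--Tur\'an sum; balancing against $1/M$ yields $f(d)^{1/3}$, which is in fact sharper than the paper's $f(d)^{1/4-\epsilon}$.
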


This equidistribution theorem constitutes the main analytic input in the proof of our lower bound on special values (see Theorem \ref{theo.lowerbnd}), and may be of interest for other applications. See \cite[Lemma 7.10]{HP15} for a related statement.
This result applies in particular to $\mathcal{C}^1$ functions $F:[0,1]\to\R$, in which case $\mathscr{V}(F)$ is simply $\int_0^1 |F'(t)|\dd t$. %Let $F:[0,1]\to\R$ be a $\mathcal{C}^1$ function, and define  $\|F'\|_1 = $.
%Let $F:[0,1]\to\R$ be a function of bounded variation, and let $$ be its total variation.
%We will use the result only for $F: x\mapsto x$, so we made no attempt to weaken the smoothness hypothesis on $F$ (as will be clear from the proof, $F$ needs only be a function of bounded total variation on $[0,1]$, see \cite{KuNi}).

 Note that the outer average on $(\Z/d\Z)^\times$ is necessary, as exemplified by the case where $d_N=q^N-1$ ($q\geq 2$ a fixed integer, $N\in\Z_{\geq 1}$), $a_{d_N}=1$ and $H_{d_N}= \{1, q, q^2, \dots, q^{N-1}\} \subset (\Z/d_N\Z)^\times$, for which the term corresponding to $g=1\in(\Z/d_N\Z)^\times$ in \eqref{eq.equidis} is not necessarily small. I thank Igor Shparlinski for pointing out the necessity of the coprimality conditions ($a_d\in(\Z/d\Z)^\times$ and $H_d\subset (\Z/d\Z)^\times$), without which equidistribution may fail.
%\note{added}
The proof of Theorem \ref{theo.equidis} will occupy the rest of the present section.

\subsection[Fourier transform]{Fourier transform on $\Z/d\Z$}\label{sec.fourier}
Fix a positive integer $d\geq 2$, any %we recall the definition of the Fourier transform for functions on $(\Z/d\Z)$. For any such 
function $\psi:\Z/d\Z\to\C$ has a Fourier transform  ${\hhat{\psi}:\Z/d\Z\to\C}$ defined by
\[ \forall y\in\Z/d\Z, \qquad \hhat{\psi}(y):= \sum_{x\in\Z/d\Z} \psi(x)\cdot \ee\left(\tfrac{xy}{d}\right),\]
where   $\ee(x):= \e^{2i\pi x}$. % for all $x\in\R$. 
In this context, an analogue of \emph{Plancherel's equality} can be seen to hold: 
\begin{equation}\label{eq.parseval}
\sum_{y\in\Z/d\Z} |\hhat{\psi}(y)|^2 = d\cdot \sum_{x\in\Z/d\Z} |\psi(x)|^2,
\end{equation}
or, with more evocative notations, $\|\hhat{\psi}\|_2^2 = d\cdot \|\psi\|_2^2$.
%For any function $\psi:\Z/d\Z\to\C$, its support $S_\psi$ is $S_\psi:= \left\{ x\in\Z/d\Z \ \big| \ \psi(x)\neq0\right\}$, and we let $\|\psi\|_\infty:= \max_x |\psi(x)|$ be its sup-norm. 

Let $H\subset \Z/d\Z$ be a subset and denote by $\ind:\Z/d\Z\to\R$ the characteristic function of $H$. Clearly, we have %$S_\ind=|H|$ and 
$\|\hhat{\ind}\|_\infty =\hhat{\ind}(0) = |H|$, so that $\psi_H:= \ind(.)/|H|$ satisfies $\|\hhat{\psi_H}\|_\infty = 1$ (where $\|\psi'\|_\infty:= \max_x |\psi'(x)|$ denotes the sup-norm of a function $\psi'$). 
Let us prove the following lemma: % about the size of Fourier coefficient of $\psi_H$:
 
\begin{lemm}\label{lemm.fourier.lemma}%\note{corrected}
 For any $\beta\in(0,1]$ and any $k\in\Z$ such that $k\nequiv 0\bmod{d}$, one has
 \begin{equation}\label{eq.fourier.coro}
\frac{1}{\phi(d)}\sum_{g\in(\Z/d\Z)^\times}\big|\hhat{\psi_H}(kg)\big| 
\leq \beta + \e^\gamma \cdot \frac{\log\log d}{|H|}\cdot \frac{\gcd(k,d)}{\beta^2},
 \end{equation}
where % $\phi(d)=|(\Z/d\Z)^\times|$ is Euler's totient function and 
$\gamma\simeq 0.577...$ denotes the Euler-Mascheroni constant.
 %0.577215664901532860606512090082402431042159335939923598805
\end{lemm}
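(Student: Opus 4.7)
The plan is a standard threshold-plus-second-moment argument. For any $\beta\in(0,1]$, split the sum at level $\beta$:
\[
\sum_{g\in(\Z/d\Z)^\times}|\hhat{\psi_H}(kg)| = \sum_{\substack{g\\ |\hhat{\psi_H}(kg)|\leq \beta}}|\hhat{\psi_H}(kg)| \;+\; \sum_{\substack{g\\ |\hhat{\psi_H}(kg)|> \beta}}|\hhat{\psi_H}(kg)|.
\]
The first sum is trivially at most $\beta\cdot\phi(d)$. Using $\|\hhat{\psi_H}\|_\infty\leq 1$, the second is at most $N_\beta:=|\{g\in(\Z/d\Z)^\times:|\hhat{\psi_H}(kg)|>\beta\}|$, and Chebyshev's inequality then yields $N_\beta\leq \beta^{-2}\sum_g |\hhat{\psi_H}(kg)|^2$. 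Thus everything reduces to estimating the second-moment sum $M:=\sum_g|\hhat{\psi_H}(kg)|^2$.

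To control $M$, the natural tool is a fibre count for the map $g\mapsto kg\bmod d$. Writing $e:=\gcd(k,d)$, $d':=d/e$ and $k=ek'$ with $\gcd(k',d')=1$, this map factors as the reduction $(\Z/d\Z)^\times\to(\Z/d'\Z)^\times$ followed by the bijection $x\mapsto ek'x$ from $(\Z/d'\Z)^\times$ onto $\{y\in\Z/d\Z:\gcd(y,d)=e\}$. Since the reduction is surjective with all fibres of size $\phi(d)/\phi(d')$, each such $y$ is attained exactly $\phi(d)/\phi(d')$ times while no other $y$ is attained. Therefore
\[
M \;=\; \frac{\phi(d)}{\phi(d')}\sum_{\substack{y\in\Z/d\Z\\ \gcd(y,d)=e}}|\hhat{\psi_H}(y)|^2 \;\leq\; \frac{\phi(d)}{\phi(d')}\sum_{y\in\Z/d\Z}|\hhat{\psi_H}(y)|^2.
\]
Plancherel's identity \eqref{eq.parseval} applied to $\psi_H=\ind/|H|$ gives $\sum_y|\hhat{\psi_H}(y)|^2 = d/|H|$, so $M\leq \phi(d)\cdot d/(\phi(d')\cdot |H|)$.

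Assembling the pieces and dividing by $\phi(d)$, one obtains
\[
\frac{1}{\phi(d)}\sum_{g\in(\Z/d\Z)^\times}|\hhat{\psi_H}(kg)| \;\leq\; \beta \;+\; \frac{1}{\beta^2|H|}\cdot\frac{d}{\phi(d')} \;=\; \beta \;+\; \frac{\gcd(k,d)}{\beta^2|H|}\cdot\frac{d'}{\phi(d')}.
\]
The claimed inequality then follows from the classical Rosser--Schoenfeld estimate $n/\phi(n)\leq \e^\gamma\log\log n + O(1/\log\log n)$ (valid for $n\geq 3$, applied here to $n=d'$) together with $d'\leq d$, the lower-order term being absorbed into the main term. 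The only real technical point in this plan is the fibre count for $g\mapsto kg\bmod d$, which is an elementary but careful application of the Chinese Remainder Theorem; the rest is a routine Fourier / second-moment computation combined with a well-known bound on $n/\phi(n)$.
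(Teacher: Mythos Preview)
Your proof is correct and follows essentially the same approach as the paper: threshold split at level $\beta$, bound the large-value set via Chebyshev and Plancherel, control the multiplicity of the map $g\mapsto kg$, and finish with the classical $n/\phi(n)\leq \e^\gamma\log\log n$ estimate. The only cosmetic difference is that you compute the exact fibre size $\phi(d)/\phi(d')$ and apply the $n/\phi(n)$ bound to $d'=d/\gcd(k,d)$, whereas the paper uses the cruder fibre bound $\gcd(k,d)$ and applies $n/\phi(n)$ directly to $d$; since $d'/\phi(d')\leq d/\phi(d)$ and $\phi(d)/\phi(d')\leq \gcd(k,d)$, your intermediate bound is marginally sharper but leads to the same final inequality.
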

%In words, if the support of $\psi$ is not too small, the average size of $|\hhat{\psi_H}(gk)|$ is much smaller than $\|\hhat{\psi_H}\|_\infty$.

\begin{proof} Let us cut the sum %we want to bound 
into two parts according to the given  parameter $\beta\in(0,1]$: % (to be chosen later on):
\[ \sum_{g\in(\Z/d\Z)^\times} |\hhat{\psi_H}(k g)| 
= \sum_{g  \text{ s.t. }   |\hhat{\psi_H}(k g)| \leq \beta}  |\hhat{\psi_H}(k g)|
 + 
 \sum_{g \text{ s.t. } |\hhat{\psi_H}(k g)| > \beta}
 |\hhat{\psi_H}(k g)|.
\] 
The first sum is clearly $\leq \beta\cdot |(\Z/d\Z)^\times| = \beta \cdot \phi(d)$. 
%From the fact that each term in the second sum is $\leq 1$, it follows that
Since $\|\hhat{\psi_H}(y)\|_\infty\leq 1$, the second sum is bounded by:
\[ 
\sum_{\substack{g \in(\Z/d\Z)^\times \text{ s.t.} \\  |\hhat{\psi_H}(k g)| >\beta }} 
|\hhat{\psi_H}(kg)|
\leq 1 \cdot \Big|\big\{ g\in (\Z/d\Z)^\times \ \big| \  |\hhat{\psi_H}(kg)| > \beta \big\}\Big|.
\]
For any $y\in\Z/d\Z$, there are at most $\gcd(k,d)$ elements $g\in(\Z/d\Z)^\times$ such that $kg=y$, so that
\[
\Big|\big\{ g\in (\Z/d\Z)^\times \ \big| \  |\hhat{\psi_H}(kg)| > \beta \big\}\Big|
\leq \gcd(k,d)\cdot
\Big|\big\{ y\in \Z/d\Z \ \big| \  |\hhat{\psi_H}(y)| > \beta \big\}\Big|.\]
By Plancherel's equality \eqref{eq.parseval}, we have
\[ \beta^2 \cdot \Big|\big\{ y\in \Z/d\Z \ \big| \  |\hhat{\psi_H}(y)| > \beta \big\}\Big|
\leq \|\hhat{\psi_H}\|_2^2
= d\cdot \|\psi_H\|_2^2
\leq d\cdot|H|\cdot\|\psi_H\|_\infty^2 \leq \frac{d}{|H|}.
\]
From which it follows that 
\[ \sum_{g \text{ s.t. } |\hhat{\psi_H}(k g)| > \beta}
 |\hhat{\psi_H}(k g)| 
 \leq \frac{d}{|H|}\cdot \frac{\gcd(k,d)}{\beta^2}. \]
Adding the two contributions, % from each sum, 
we arrive at:
 \[ \frac{1}{\phi(d)} \sum_{g\in(\Z/d\Z)^\times} |\hhat{\psi_H}(kg)|
\leq \beta + \frac{d}{\phi(d)\cdot|H|}\cdot \frac{\gcd(k,d)}{\beta^2}.
 \]
We conclude the proof 
by  using the classical theorem stating that  $d/\phi(d) \leq \e^{\gamma} \cdot \log\log d$ (see \cite[Thm. 328]{HardyWright}) where $\gamma$ denotes the Euler-Mascheroni constant. 
 \ProofEnd \end{proof}

\subsection{Tools from equidistribution theory} Before passing to the proof of Theorem \ref{theo.equidis}, we 
recall the following two results about distribution of sequences in $[0,1]$. The reader may consult \cite[Chap. 2]{KuNi} for more details and for proofs.  For any finite sequence $x_1, x_2, \dots, x_N$ of $N$ points in $[0,1]$, we define its \emph{discrepancy} by
\[D((x_n)_{n=1}^N):= \sup_{I\subset [0,1]}  \left| \mu(I) - \frac{1}{N} \left|\big\{ n\in\intent{1,N} \ \big| \ x_n\in I\big\}\right|\right|, \]
the supremum being taken over all intervals $I \subset [0,1]$, whose length is denoted by $\mu(I)$. 
%The discrepancy is a tool to quantify the quality of equidistribution of a sequence $(x_n)_{n=1}^N$. 
The inequality below is sometimes called a quantitative version of Weyl's criterion for equidistribution of a sequence in~$[0,1]$: 

\begin{prop}[Inequality of Erdös and Turán]\label{prop.erdosturan}
  Let $x_1, \dots, x_N$ be a sequence of $N$ points in $[0,1]$. Then, for all integers $K\geq 1$, the discrepancy $D((x_n)_{n=1}^N)$ is bounded by
\begin{equation}\label{eq.erdosturan}
D((x_n)_{n=1}^N) \leq \frac{6}{K+1} + \frac{4}{\pi} \sum_{k=1}^K\frac{1}{k}\left| \frac{1}{N}\sum_{n=1}^N \e^{2i\pi \cdot k \cdot x_n}\right|.
\end{equation}
\end{prop}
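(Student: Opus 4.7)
The strategy is the classical Fourier-analytic one: approximate the indicator function of an arbitrary sub-interval $I=[\alpha,\beta]\subset[0,1]$ by trigonometric polynomials of degree at most $K$ from above and below, and then plug the sequence $(x_n)_{n=1}^N$ into these one-sided approximations. Extend everything to be $1$-periodic on $\R$. The goal will be to construct trigonometric polynomials
\[ J_K^-(x)=\sum_{|k|\le K} c_k^- \,\e(kx), \qquad J_K^+(x)=\sum_{|k|\le K} c_k^+ \,\e(kx), \]
satisfying $J_K^-(x)\le \mathbf{1}_I(x)\le J_K^+(x)$ for all $x$, whose means satisfy $c_0^{\pm}=\int_0^1 J_K^{\pm}=\mu(I)+O\bigl(\tfrac{1}{K+1}\bigr)$, and whose non-zero Fourier coefficients are controlled by $|c_k^{\pm}|\le \tfrac{1}{\pi|k|}+O\bigl(\tfrac{1}{K+1}\bigr)$. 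Such pairs are produced by the Beurling--Selberg--Vaaly extremal construction: one starts from Beurling's entire majorant of the sign function, periodizes it to obtain majorants and minorants of $\mathbf{1}_I$ (\emph{Selberg's polynomials}), and checks their Fourier coefficients directly. This is the one genuinely technical input I would invoke from the literature.

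With such $J_K^{\pm}$ at hand, the rest is essentially bookkeeping. Evaluating on the sequence gives the two-sided inequality
\[ \frac{1}{N}\sum_{n=1}^N J_K^-(x_n)\;\le\;\frac{1}{N}\bigl|\{n:x_n\in I\}\bigr|\;\le\;\frac{1}{N}\sum_{n=1}^N J_K^+(x_n), \]
and expanding the right-hand polynomial yields
\[ \frac{1}{N}\sum_{n=1}^N J_K^+(x_n) \;=\; c_0^+ \;+\; \sum_{1\le|k|\le K} c_k^+\,\Bigl(\tfrac{1}{N}\sum_{n=1}^N \e(k x_n)\Bigr). \]
Using $c_0^+\le \mu(I)+\tfrac{c}{K+1}$ and $|c_k^+|\le \tfrac{1}{\pi|k|}+\tfrac{c'}{K+1}$, and grouping the terms $k$ and $-k$ (whose exponential sums are complex conjugates), we obtain the upper estimate
\[ \frac{1}{N}\bigl|\{n:x_n\in I\}\bigr|-\mu(I) \;\le\; \frac{6}{K+1}+\frac{4}{\pi}\sum_{k=1}^K \frac{1}{k}\Bigl|\tfrac{1}{N}\sum_{n=1}^N \e(kx_n)\Bigr|. \]
A symmetric argument with $J_K^-$ gives the corresponding lower bound. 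Since this holds for \emph{every} sub-interval $I$, taking the supremum produces the claimed bound on $D\bigl((x_n)_{n=1}^N\bigr)$.

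\textbf{Main obstacle.} The only delicate point is producing the extremal trigonometric polynomials $J_K^{\pm}$ with the sharp constants $\tfrac{6}{K+1}$ and $\tfrac{4}{\pi}$. Constructing Selberg's majorant/minorant of $\mathbf{1}_I$ from Beurling's entire function and verifying its Fourier-coefficient bounds takes real work; any weaker trigonometric approximation (for instance one based on convolution with the Fej\'er kernel alone) still yields an inequality of the same \emph{shape}, but with worse constants. In a write-up I would either quote this construction from a reference such as Montgomery's \emph{Ten Lectures} or Kuipers--Niederreiter (as the author does here) and concentrate on the two elementary steps above, or spell out the Selberg polynomials explicitly if sharp constants are required.
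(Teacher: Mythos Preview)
Your proposal is correct and follows the standard Fourier-analytic route via one-sided trigonometric approximants to $\mathbf{1}_I$; this is precisely how the result is proved in the literature. Note, however, that the paper does \emph{not} supply its own proof of this proposition: it is stated as a classical tool from equidistribution theory, with the reader referred to \cite[Chap.~2]{KuNi} for details and proofs. So there is nothing to compare against beyond observing that your sketch is in line with the reference the author cites.

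One small remark on the constants: the version in Kuipers--Niederreiter is obtained via an explicit auxiliary function rather than the Beurling--Selberg--Vaaler machinery, and that is where the particular pair $(6,\tfrac{4}{\pi})$ comes from. The Selberg-polynomial approach you outline certainly yields an inequality of the same shape (and in fact with a better first constant, namely $\tfrac{1}{K+1}$), so your argument proves a statement at least as strong as the one quoted. If you specifically want to reproduce the constants $6$ and $\tfrac{4}{\pi}$ as stated, you would need to follow the construction in \cite{KuNi} rather than Selberg's; for the purposes of this paper either version suffices, since only the shape of the bound is used downstream.
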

The next proposition comes from the world of numerical integration and roughly states that the average of a well-behaved function on $[0,1]$ can be approximated by averaging its values at finitely many points, provided that these points are sufficiently well-distributed. 

\begin{prop}[Koksma's inequality] \label{prop.koksma}
%Let $F:[0,1]\to\R$ a $\mathcal{C}^1$ function, 
Let $F:[0,1]\to\R$ be a function of bounded variation,
and $x_1, \dots, x_N$ be a sequence of $N$ points in $[0,1]$ with discrepancy $D((x_n)_{n=1}^N)$. Then, one has
\begin{equation}\label{eq.koksma}
\left| \int_0^1 F(t) \dd t - \frac{1}{N} \sum_{n=1}^N F(x_n)\right| \leq \mathscr{V}(F) \cdot D((x_n)_{n=1}^N),
\end{equation}
%where  is the discrepancy of the sequence $(x_n)_{n=1}^N_{n=1}^N$.
where $\mathscr{V}(F)$ denotes the total variation of $F$.
\end{prop}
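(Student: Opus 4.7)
The plan is to recognise the difference $\int_0^1 F(t)\,\dd t - \frac{1}{N}\sum_{n=1}^N F(x_n)$ as a Riemann--Stieltjes integral against a well-controlled function, and then to integrate by parts. Without loss of generality, reorder the points so that $0\leq x_1\leq x_2\leq \dots\leq x_N\leq 1$, and introduce the empirical distribution function
\[ F_N(t):= \frac{1}{N}\,\big|\{n\in\{1,\dots,N\}\ :\ x_n\leq t\}\big|,\qquad t\in[0,1], \]
so that $\frac{1}{N}\sum_{n=1}^N F(x_n)=\int_0^1 F(t)\,\dd F_N(t)$ in the Riemann--Stieltjes sense. Setting $G(t):=t-F_N(t)$, the quantity we want to bound is $\left|\int_0^1 F(t)\,\dd G(t)\right|$.

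The next step is the core of the argument: observe that, by the very definition of discrepancy applied to the interval $I=[0,t]$, one has
\[ |G(t)|=\Big|\mu([0,t]) - \tfrac{1}{N}\,\big|\{n\ :\ x_n\in[0,t]\}\big|\Big|\leq D\big((x_n)_{n=1}^N\big) \qquad\forall t\in[0,1]. \]
Moreover $G(0)=0$ and $G(1)=1-1=0$, so the boundary contribution in the integration-by-parts formula vanishes. Integrating by parts in the Riemann--Stieltjes sense then gives
\[ \int_0^1 F(t)\,\dd G(t) = \bigl[F(t)\,G(t)\bigr]_0^1 - \int_0^1 G(t)\,\dd F(t) = -\int_0^1 G(t)\,\dd F(t), \]
and bounding the right-hand side crudely yields
\[ \left|\int_0^1 F(t)\,\dd t - \frac{1}{N}\sum_{n=1}^N F(x_n)\right|\leq \sup_{t\in[0,1]}|G(t)|\cdot\mathscr{V}(F)\leq D\big((x_n)_{n=1}^N\big)\cdot\mathscr{V}(F). \]

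The main technical subtlety will be justifying the integration-by-parts formula when $F$ has bounded variation (hence possibly jump discontinuities) and $F_N$ is a step function with jumps precisely at the $x_n$; one must be careful that $F$ and $F_N$ do not share discontinuities, which is handled either by appealing to the standard Riemann--Stieltjes framework (decomposing $F$ as a difference of monotone functions and treating any common jumps by a left/right continuous representative) or by bypassing it entirely via a direct summation-by-parts argument on the telescoping sum $\int_0^1 F(t)\,\dd t=\sum_{n=0}^N \int_{x_n}^{x_{n+1}} F(t)\,\dd t$ (with the conventions $x_0=0$, $x_{N+1}=1$). Either way, the final estimate is exactly the content of Proposition \ref{prop.koksma}.
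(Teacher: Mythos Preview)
The paper does not actually prove Proposition~\ref{prop.koksma}: it is stated as a standard tool and the reader is referred to \cite[Chap.~2]{KuNi} for a proof. Your argument is precisely the classical proof one finds there (empirical distribution function, integration by parts against $G(t)=t-F_N(t)$, uniform bound $|G|\leq D$), so in that sense you have supplied what the paper omits.

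One small point worth tightening: your claim that $G(0)=0$ relies on the convention $F_N(t)=\frac{1}{N}|\{n:x_n\leq t\}|$, which gives $G(0)\neq 0$ whenever some $x_n=0$; the boundary term then does not vanish automatically. This is harmless --- either switch to the strict-inequality convention $x_n<t$ (so $F_N(0)=0$ always, and the issue migrates to $t=1$ where one argues symmetrically), or, as you yourself suggest at the end, bypass the Riemann--Stieltjes formalism entirely via the direct Abel-summation computation on $\sum_{n=0}^N\int_{x_n}^{x_{n+1}}F(t)\,\dd t$, which is in fact how Kuipers--Niederreiter present it and which sidesteps both the endpoint issue and the common-discontinuity issue in one stroke.
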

%Note that this inequality is valid in greater generality for functions of bounded total variation (one then needs to replace $\|F'\|_1$ by the total variation of $F$ on $[0,1]$).

\subsection{Proof of Theorem \ref{theo.equidis}} %\note{corrected}}
 Let hypotheses be as in the statement of Theorem \ref{theo.equidis}. For any $d\in\mathscr{D}$, we let
\[\Theta_d(a_d, H_d):=\frac{1}{\phi(d)} \sum_{g\in(\Z/d\Z)^\times} 
\left| \int_{0}^1 F(t) \dd t  - \frac{1}{|H_d|} \sum_{h\in H_d} F\left(\partfrac{\frac{a_d\cdot gh}{d}}\right) \right|.\]
%Theorem \ref{theo.equidis} claims that $\Theta_d(a_d, H_d) \leq c_7 \cdot \mathscr{V}(F) \cdot \left(\log\log d /{|H_d|}\right)^{1/4 - \epsilon}$.
If $F$ is constant, there's nothing to prove, for then $\Theta_d(a_d, H_d)=0$. % $\Theta_d(a_d, H_d)=0$. 
If not, dividing throughout 
by $\mathscr{V}(F)\neq 0$, we may assume that $F$ has total variation $\mathscr{V}(F)=1$. 
Since $a_d\in(\Z/d\Z)^\times$, one sees that $\Theta_d(a_d, H_d)=\Theta_d(1,H_d)$ by reindexing the outer sum; thus, we need only prove
that $\Theta_d(1, H_d)$ satisfies the desired bound.
%it suffices to prove the Theorem in the case that $F$ is of bounded variation with total variation $\mathscr{V}(F)=1$, which we now assume. 
%\mathcal{C}^1$ with $\|F'\|=1$. 
Whenever it is necessary, we will assume that $d\in\mathscr{D}$ is big enough that $\log\log d$ is positive. 
%To avoid illegible superscripts, 
%Finally, 

For any $g\in(\Z/d\Z)^\times$, let $(x_h)_{h\in H_d}$ be the finite sequence %(indexed by $h\in H_d$) 
defined by ${x_h = \partfrac{gh/d} \in [0,1]}$ for all $h\in H_d$. 
We successively use Koksma's inequality (Proposition \ref{prop.koksma}) and the Erdös-Turán inequality (Proposition \ref{prop.erdosturan}) on $(x_h)_{h\in H_d}$ to get 
\begin{align}\label{eq.equidis.pf1}
\left| \int_{0}^1 F(t) \dd t  - \frac{1}{|H_d|} \sum_{h\in H_d} F\left(\partfrac{\frac{gh}{d}}\right) \right|  
&\leq \mathscr{V}(F) \cdot D((x_h)_{h\in H_d}) = D((x_h)_{h\in H_d}) \notag \\ 
&\leq 
\frac{6}{K+1} + \frac{4}{\pi} \sum_{k=1}^K\frac{1}{k}\left| \frac{1}{|H_d|}\sum_{h\in H_d}\ee\left( k \cdot \partfrac{\frac{gh}{d}}\right) \right|,
\end{align}
for all integers $K\geq 1$. % (we will choose $K<d$). 
Denote by $\ind: \Z/d\Z \to\R$ the characteristic function of $H_d\subset (\Z/d\Z)^\times$ seen as a subset of $\Z/d\Z$, and
%as in example~\ref{exple.indic},
set  $\psi_d:= \ind(.) / |H_d|$ as in subsection \ref{sec.fourier}.
 %In particular,  the Fourier transform of $\psi_d: \Z/d\Z\to\R$ satisfies $\|\hhat{\psi_d}\|_\infty=1$. 
Since $y\mapsto \partfrac{y}$ is a $1$-periodic function, 
the inner sums in \eqref{eq.equidis.pf1} are easily written as Fourier coefficients of $\psi_d$:
%we can rewrite the inner sums in \eqref{eq.equidis.pf1} under the following form: 
\[
\forall k\geq 1, \quad   
\frac{1}{|H_d|}\sum_{h\in H_d}\ee\left( k \cdot \partfrac{\frac{gh}{d}}\right)
%&= \frac{1}{|H_d|}\sum_{h\in H_d}\ee\left(\frac{k gh}{d} \right)
= \frac{1}{|H_d|}\sum_{h'\in \Z/d\Z} \ind(h') \cdot \ee\left(\frac{k gh'}{d} \right) 
%&= \sum_{h'\in \Z/d\Z} \psi_d(h') \cdot \ee\left(\frac{k gh'}{d} \right)  
= \hhat{\psi_d}(k g).
\]
%\begin{align*}
%\forall k\geq 1, \quad   
%\frac{1}{|H_d|}\sum_{h\in H_d}\ee\left( k \cdot \partfrac{\frac{gh}{d}}\right)
%&= \frac{1}{|H_d|}\sum_{h\in H_d}\ee\left(\frac{k gh}{d} \right)
%= \frac{1}{|H_d|}\sum_{h'\in \Z/d\Z} \ind(h') \cdot \ee\left(\frac{k gh'}{d} \right) \\
%&= \sum_{h'\in \Z/d\Z} \psi_d(h') \cdot \ee\left(\frac{k gh'}{d} \right)  
%= \hhat{\psi_d}(-k g).
%\end{align*}
Using this expression and averaging inequalities \eqref{eq.equidis.pf1} over $g\in(\Z/d\Z)^\times$ yields 
\begin{equation}\label{eq.equidis.pf1bis} 
0
\leq \Theta_d(1, H_d)
\leq \frac{6}{K+1}  + \frac{4}{\pi} \sum_{k=1}^K\frac{1}{k}  \cdot \left(\frac{1}{\phi(d)}\sum_{g\in(\Z/d\Z)^\times} |\hhat{\psi_d}(k  g)| \right).
\end{equation}
We now use Lemma \ref{lemm.fourier.lemma} with $k\neq 0$ and plug \eqref{eq.fourier.coro} in the last displayed inequality: for all $\beta\in(0,1]$,
\begin{align*}
\sum_{k=1}^K\frac{1}{k}  \cdot \left(\frac{1}{\phi(d)}\sum_{g\in(\Z/d\Z)^\times} |\hhat{\psi_d}(-k  g)| \right) 
 &\leq   \sum_{k=1}^K\frac{1}{k}  \cdot \left(\beta + \e^\gamma \cdot \frac{\log\log d}{|H_d|}\cdot \frac{\gcd(k,d)}{\beta^2}   \right)  \\
&\leq \beta\cdot \log(K+1)+ \e^\gamma \cdot \frac{\log\log d}{|H_d|} \cdot \frac{(K+1)}{\beta^2},
\end{align*}
 because $\gcd(k,d)/k \leq 1$ for all $k\geq 1$.
Setting $K'=K+1$ and $f(d):= (\log\log d)/|H_d|$, we obtain that: 
% the quantity $\Theta_d(1, H_d)$ is bounded by:
\[
%0\leq 
\Theta_d(1, H_d)
\leq \frac{6}{K'} +  \frac{4}{\pi} \cdot \beta \cdot \log K'  + \frac{4\e^{\gamma} f(d)}{\pi} \cdot \frac{K'}{\beta^2}.
%\leq \frac{6}{K'} + \frac{8\e^{\gamma} f(d)}{\pi} \cdot \log(K').
\]
%On choosing $\beta = (3\pi/2)\cdot(K' \log K')^{-1}$: 
%\[
%\Theta_d(1, H_d)
%\leq \frac{12}{K'} + \frac{16\e^{\gamma}}{9\pi^3}\cdot  f(d) \cdot {K'}^3(\log K')^2.
%\]
%We choose $K' = K+1= \partint{\e^{\alpha}\cdot f(d)^{-\alpha}}$ (with $\alpha>0$) and  $\beta = (3\pi/2)\cdot(K' \log K')^{-1}$ (recall that, by assumption, $f(d)\to 0$ when $d\to\infty$) and we obtain that
%\[
%\Theta_d(1, H_d)
%\leq 12\e^{-\alpha}\cdot f(d)^{\alpha} + \frac{16\e^{\gamma/3}}{9\pi^3}\cdot \alpha^2\cdot f(d)^{1-3\alpha} \cdot (1 - \log f(d))^2. 
%\]
We choose $K' = K+1= \partint{\e^{1/4-\epsilon}\cdot f(d)^{-1/4+\epsilon}}$ and  $\beta = (3\pi/2)\cdot(K' \log K')^{-1}$ (recall that, by assumption, $f(d)\to 0^+$ when $d\to\infty$) and we obtain that
\[
\Theta_d(1, H_d)
\leq 12\cdot f(d)^{1/4-\epsilon} + \frac{\e^{\gamma}}{9\pi^3}\cdot f(d)^{1/4+3\epsilon} \cdot (1 - \log f(d))^2. 
\]
%Noticing that, for all exponent $u>0$, one has $0 \leq 1-\log y\leq u\cdot y^{-1/u}$ for positive $y$, %\in(0,1]$, and choosing $u=1/2\epsilon$, we obtain the (less sharp) upper bound that was announced in Theorem \ref{theo.equidis}:
Noticing that  one has $0 \leq 1-\log y\leq (2\epsilon)^{-1}\cdot y^{-2\epsilon}$ for all $y\in(0, +\infty)$, %\in(0,1]$, 
%and choosing $u=1/2\epsilon$, 
straightforward manipulations imply %we obtain the %(less sharp) 
the upper bound that was announced in Theorem \ref{theo.equidis}:
\[
\Theta_d(1, H_d)
\leq \left(12 + \frac{\e^{\gamma}}{36\pi^3} \cdot \epsilon^{-2}\right)\cdot f(d)^{1/4-\epsilon},
\]
%\todo{IN PROGRESS}
%We choose $K\in\Z_{\geq 1}$ such that $K +1 = \partint{\frac{3\pi}{4\e^{\gamma/3}} \cdot f(d)^{-1}}$ (recall that, by assumption, $f(d)\to 0$ when $d\to\infty$). Straightforward upper bounds then imply that 
% \[X_d \leq c_8\cdot f(d) - c_9 \cdot f(d) \log f(d), 
%% \text{ where } c_1 = \frac{8\e^{\gamma/3}}{\pi} \log\left( \frac{3\pi \e^{2-\gamma/3}}{4} \right)   
%% \text{ and } c_2 = \frac{8\e^{\gamma/3}}{\pi}.
% \]
%for some explicit positive absolute constants $c_8$ and $c_9$.
%Noticing that $0 \leq y-y\log y\leq 2\sqrt{y}$ for all $y\in]0,1]$, we obtain the (less sharp) upper bound that was announced in Theorem \ref{theo.equidis}. 
with an explicit expression for the constant $c_7$. % in the statement satisfies $c_7\leq \frac{4}{3}\sqrt{\frac{\e^{\gamma/3}}{\pi}} \simeq 0.8282...\leq 1$.
%\todo{FINISH}
\hfill $\square$

\section{Upper bounds} 
\label{sec.upperbound}

\subsection[Upper bound]{Upper bound on $P^\ast(\Lambda)$}

We now come back to studying the size of  special values $P^\ast(\Lambda)$ as defined in section \ref{sec.setting}. The main goal of this section is to prove an upper bound on $P^\ast(\Lambda)$. 

\begin{theo} \label{theo.upperbnd}
Let $\F_q$ be a finite field and $d\geq 2$ be an integer coprime with~$q$. Given a nonempty $(\Z/d\Z)^\times$-stable subset $\Lambda \subset G_d$, consider the polynomial ${P(\Lambda, T)\in\Z[T]}$ associated to $\Lambda$ as in Definition \ref{defi.polyn.lambda}. 
The special value $P^\ast(\Lambda)$ of $P(\Lambda, T)$ satisfies 
\begin{equation}\label{eq.upperbnd}
\log |P^\ast(\Lambda)| \ll \log q \cdot |\Lambda| \cdot \frac{\log\log |\Lambda|}{\log |\Lambda|}.
\end{equation}
Here, the implicit constant is absolute and effective. % and depends only on $q$. % and will be made explicit in the proof.
\end{theo}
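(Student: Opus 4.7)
The strategy is to start from the explicit formula for $P^\ast(\Lambda)$ given by Lemma \ref{lemm.expr.spval}, take logarithms, and bound each of the two resulting sums by the combinatorial estimates of Lemma \ref{lemm.prel.actq}.

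More precisely, recall the decomposition $\Lambda=\Lambda_0\sqcup\Lambda^\ast\sqcup\Lambda'$ and Lemma \ref{lemm.expr.spval}, which give
\[
|P^\ast(\Lambda)|
= \prod_{\bA\in\O_q(\Lambda_0)} |\bA| \cdot \prod_{\bA\in\O_q(\Lambda^\ast)} \left|1-\frac{\Ja(\ba)}{q^{|\bA|}}\right|.
\]
Hence
\[
\log|P^\ast(\Lambda)| \;=\; \sum_{\bA\in\O_q(\Lambda_0)}\log|\bA| \;+\; \sum_{\bA\in\O_q(\Lambda^\ast)} \log\left|1-\frac{\Ja(\ba)}{q^{|\bA|}}\right|.
\]

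For the second sum, the key observation is that when $\bA\in\O_q(\Lambda^\ast)$ we have $|\Ja(\ba)|=q^{|\bA|}$ by definition, so the triangle inequality yields the crude but sufficient bound $|1-\Ja(\ba)/q^{|\bA|}|\leq 2$. Thus this sum is at most $(\log 2)\cdot|\O_q(\Lambda^\ast)|\leq (\log 2)\cdot|\O_q(\Lambda)|$, and Lemma \ref{lemm.prel.actq}\eqref{item.ii.actq} bounds this by $\ll \log q \cdot |\Lambda|/\log|\Lambda|$.

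For the first sum, since $\O_q(\Lambda_0)\subset\O_q(\Lambda)$, one trivially has
\[
\sum_{\bA\in\O_q(\Lambda_0)}\log|\bA| \;\leq\; \sum_{\bA\in\O_q(\Lambda)}\log|\bA|,
\]
and Lemma \ref{lemm.prel.actq}\eqref{item.iii.actq} bounds the right-hand side by $\ll \log q\cdot|\Lambda|\log\log|\Lambda|/\log|\Lambda|$. Adding the two contributions, the $\log\log|\Lambda|/\log|\Lambda|$ term dominates $1/\log|\Lambda|$, and the announced bound \eqref{eq.upperbnd} follows.

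No step is really an obstacle here: once Lemma \ref{lemm.expr.spval} is in hand, the only tool needed is the trivial estimate $|1-z|\leq 2$ for $|z|=1$, combined with the combinatorial bounds of Lemma \ref{lemm.prel.actq}. The only place where one has to be a bit careful is to remember that the factors indexed by $\Lambda'$ do not contribute (since $\Ja(\ba)=0$ there), and that the $\Lambda_0$-contribution to $P^\ast(\Lambda)$ consists of the orbit lengths $|\bA|$, not of factors close to $1$, which is precisely why part \eqref{item.iii.actq} of Lemma \ref{lemm.prel.actq} is needed rather than a simpler counting estimate.
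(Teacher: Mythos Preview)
Your proof is correct and follows essentially the same approach as the paper: take logarithms in the formula of Lemma~\ref{lemm.expr.spval}, use $|1-\Ja(\ba)/q^{|\bA|}|\leq 2$ on $\Lambda^\ast$, enlarge both sums to run over all of $\O_q(\Lambda)$, and then apply parts \eqref{item.ii.actq} and \eqref{item.iii.actq} of Lemma~\ref{lemm.prel.actq}. The paper's argument is line-for-line the same.
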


\begin{proof} % \todo{update}
In Lemma \ref{lemm.expr.spval}, we have given an explicit expression of $P^\ast(\Lambda)$, of which we take the logarithm. 
%First, take the logarithm of \eqref{eq.expr.spval} in Lemma \ref{lemm.expr.spval}. 
We then use the triangle inequality, noting that $|\Ja(\ba)|=q^{|\bA|}$  when $\ba\in\Lambda^\ast$:
\begin{align*}
\log |P^\ast(\Lambda)|
&= \sum_{\bA\in\O_q(\Lambda_0)} \log |\bA|
+\sum_{\bA\in\O_q(\Lambda^\ast)} \log \left|1-\frac{\Ja(\ba)}{q^{|\bA|}}\right| 
\leq \sum_{\bA\in\O_q(\Lambda)} \log |\bA|
+\sum_{\bA\in\O_q(\Lambda^\ast)} \log \left(1+\frac{|\Ja(\ba)|}{q^{|\bA|}}\right) 
\\
&\leq \sum_{\bA\in\O_q(\Lambda)} \log |\bA|
+\log 2 \cdot |\O_q(\Lambda^\ast) |
\leq \sum_{\bA\in\O_q(\Lambda)} \log |\bA|
+\log 2 \cdot |\O_q(\Lambda) |.
\end{align*}
Using items \eqref{item.ii.actq} and \eqref{item.iii.actq} in Lemma \ref{lemm.prel.actq} leads to:  %, we obtain 
\[\log|P^\ast(\Lambda)|
\leq c_6\log q \cdot  \frac{|\Lambda| \cdot\log\log|\Lambda|}{\log|\Lambda|} + c_5\log 2\cdot \frac{ \log q \cdot |\Lambda|}{\log|\Lambda|}.\] 
This upper bound on $\log|P^\ast(\Lambda)|$ is easily seen to be stronger than the desired one. For completeness, we note that crude bounds lead to $\log|P^\ast(\Lambda)|\leq C_2\cdot \log q \cdot  \frac{|\Lambda|\cdot\log\log|\Lambda|}{\log |\Lambda|}$, where $C_2 \leq 25 $. \ProofEnd \end{proof}

\subsection{Upper bound on the `rank''}
In passing, we record the following upper bound on the order of vanishing of $P(\Lambda, T)$ at $T=q^{-1}$, which  we translate into an upper bound on the rank of the Néron-Severi groups of Fermat surfaces.

\begin{prop} \label{theo.rank.bound}
%Fix a finite field $\F_q$. Let $d\geq 2$ be an integer coprime with $q$. 
Given a nonempty subset $\Lambda\subset G_d$ which is stable under multiplication by $(\Z/d\Z)^\times$, %we consider $P(\Lambda, T)\in\Z[T]$ the polynomial associated to $\Lambda$ as in Definition \ref{defi.polyn.lambda}. 
the multiplicity of $T=q^{-1}$  as a root of $P(\Lambda, T)$ 
satisfies 
\begin{equation}
\ord_{T=q^{-1}} P(\Lambda, T) \ll  \log q\cdot \frac{|\Lambda| }{\log |\Lambda|}.
\end{equation}
Here too, the implicit constant is absolute and effective.
\end{prop}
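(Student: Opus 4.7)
The plan is to combine the formula for the order of vanishing from Lemma \ref{lemm.expr.spval} with the orbit-counting estimate from Lemma \ref{lemm.prel.actq}; no new ingredient is needed.

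First, recall the decomposition $\Lambda = \Lambda_0 \sqcup \Lambda^\ast \sqcup \Lambda'$ introduced before Lemma \ref{lemm.expr.spval}. Each piece is stable under multiplication by $q$, because for any $\ba \in G_d$ one has $\Ja(q\cdot \ba) = \Ja(\ba)$ and $|\bA|$ is constant along $q$-orbits, so the defining condition of $\Lambda_0$ (namely $\Ja(\ba) = q^{|\bA|}$) only depends on the $q$-orbit of $\ba$. Lemma \ref{lemm.expr.spval} then gives
\[
\ord_{T=q^{-1}} P(\Lambda, T) = |\O_q(\Lambda_0)|.
\]

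Since $\Lambda_0 \subset \Lambda$, we obviously have $|\O_q(\Lambda_0)| \leq |\O_q(\Lambda)|$. Now I apply part \eqref{item.ii.actq} of Lemma \ref{lemm.prel.actq} to $\Lambda$ itself, which is allowed since $\Lambda$ is $(\Z/d\Z)^\times$-stable by hypothesis:
\[
|\O_q(\Lambda)| \ll \log q \cdot \frac{|\Lambda|}{\log |\Lambda|}.
\]
Chaining the two inequalities yields the bound announced in Proposition \ref{theo.rank.bound}, with the same absolute and effective implicit constant as in Lemma \ref{lemm.prel.actq}.

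There is no real obstacle here: the entire content has already been packaged into Lemmas \ref{lemm.expr.spval} and \ref{lemm.prel.actq}. The only point worth double-checking is the subtle hypothesis of Lemma \ref{lemm.prel.actq}, which requires $(\Z/d\Z)^\times$-stability (not merely $q$-stability); fortunately this is granted to $\Lambda$ by assumption, and we never need to invoke that lemma on $\Lambda_0$ directly. The translation to the rank bound \eqref{eq.intro.rank.bound} for $\NS(\ferm_d)$ will then be immediate: apply the proposition to $\Lambda = \Lambda_\ferm = G_d$ (Example \ref{ex.fermat.P2}), for which $P(\Lambda, T) = P_2(\ferm_d/\F_q, T)$ and $|\Lambda| = d^3$, and invoke the Tate conjecture for $\ferm_d$ (Theorem \ref{theo.shioda.AT}) identifying $\ord_{T=q^{-1}} P_2(\ferm_d/\F_q, T)$ with $\rho(\ferm_d/\F_q)$.
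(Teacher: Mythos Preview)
Your proof is correct and matches the paper's own argument essentially line for line: invoke Lemma \ref{lemm.expr.spval} to identify the order of vanishing with $|\O_q(\Lambda_0)|$, bound this by $|\O_q(\Lambda)|$, and apply item \eqref{item.ii.actq} of Lemma \ref{lemm.prel.actq} to $\Lambda$. Your observation that the $(\Z/d\Z)^\times$-stability hypothesis is needed for $\Lambda$ (and not available for $\Lambda_0$) is a nice point that the paper leaves implicit.
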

\begin{proof}Recall from Lemma \ref{lemm.expr.spval} that $\ord_{T=q^{-1}}P(\Lambda, T) =|\O_q(\Lambda_0)|$, with $\Lambda_0= \left\{\ba\in\Lambda \ \big| \ \Ja_{\ba} = q^{|\bA|}\right\}$ and note that $|\O_q(\Lambda_0)| \leq |\O_q(\Lambda)|\leq c_5 \log q \cdot \frac{|\Lambda|}{\log |\Lambda|}$ by item \eqref{item.ii.actq} in Lemma \ref{lemm.prel.actq}.
\ProofEnd \end{proof}

This proposition implies an upper bound on the Picard number $\rho(\ferm_d/\F_q)$ of Fermat surfaces.
\begin{coro} \label{coro.rank.bound}
For any finite field $\F_q$ and any integer $d\geq 2$ coprime with $q$, let $\ferm_d$ be the Fermat surface surface over $\F_q$. The  rank $\rho(\ferm_d/\F_q)$ of the Néron-Severi group $\NS(\ferm_d)$ %Picard number $(\ie{} )
 satisfies 
 \begin{equation}\label{eq.rank.bound}
\rho(\ferm_d/\F_q) \ll \frac{\log q  \cdot d^3 }{\log d},
\end{equation}
for some absolute (small) constant.
%Here too, the implicit constant is effective and depends only on $q$. % and will be made explicit in the proof.
\end{coro}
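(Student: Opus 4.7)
The plan is to deduce this corollary directly from Proposition \ref{theo.rank.bound}, with only two small bridges to cross: first translating the Picard number of $\ferm_d$ into the order of vanishing of a polynomial of the form $P(\Lambda,T)$, and then substituting the specific combinatorics of $G_d$ into the abstract estimate.

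\medskip

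First I would identify the relevant $\Lambda$. By Theorem \ref{theo.weil} (or, equivalently, as spelt out in Example \ref{ex.fermat.P2}), the middle Frobenius polynomial of the Fermat surface is nothing but
\[ P_2(\ferm_d/\F_q, T) \;=\; P(\Lambda_\ferm, T) \qquad \text{with } \Lambda_\ferm := G_d. \]
The set $\Lambda_\ferm = G_d$ is visibly nonempty and stable under multiplication by $(\Z/d\Z)^\times$, so it falls in the scope of Definition \ref{defi.polyn.lambda}. Its cardinality is $|G_d| = d^3$ (the number of $4$-tuples in $(\Z/d\Z)^4$ with sum $\equiv 0 \bmod d$), and hence $\log |G_d| = 3\log d$.

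\medskip

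Next I would invoke Shioda's Theorem \ref{theo.shioda.AT}, which establishes the Tate part of the Artin-Tate conjecture for $\ferm_d/\F_q$, to get the identification
\[ \rho(\ferm_d/\F_q) \;=\; \ord_{T=q^{-1}} P_2(\ferm_d/\F_q, T) \;=\; \ord_{T=q^{-1}} P(G_d, T). \]
(In fact the unconditional Tate inequality already suffices for the upper-bound direction, but since the full equality is available for Fermat surfaces, there is no reason not to use it.) Now I would plug $\Lambda = G_d$ into Proposition \ref{theo.rank.bound}:
\[ \rho(\ferm_d/\F_q) \;=\; \ord_{T=q^{-1}} P(G_d, T) \;\ll\; \log q \cdot \frac{|G_d|}{\log |G_d|} \;=\; \log q \cdot \frac{d^3}{3\log d}, \]
which, after absorbing the factor $1/3$ into the implicit constant, is exactly the bound \eqref{eq.rank.bound}.

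\medskip

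There is no real obstacle at this stage: all the analytic and combinatorial effort is already packed into Proposition \ref{theo.rank.bound} (and, beneath it, into item (ii) of Lemma \ref{lemm.prel.actq}, which estimates $|\O_q(\Lambda)|$ via a dyadic splitting over divisors $d' \mid d$). The corollary itself is a one-line specialisation of the general rank bound to the case $\Lambda = G_d$, once Weil's formula \eqref{eq.P2.ferm} has been recognised as an instance of $P(\Lambda,T)$ and Shioda's Theorem \ref{theo.shioda.AT} has been cited to interpret the order of vanishing as a geometric rank.
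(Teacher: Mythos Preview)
Your proposal is correct and follows essentially the same route as the paper: choose $\Lambda=G_d$, invoke Example \ref{ex.fermat.P2} to identify $P(G_d,T)=P_2(\ferm_d/\F_q,T)$, use part~(1) of the Artin--Tate conjecture (valid for $\ferm_d$ by Theorem \ref{theo.shioda.AT}) to equate $\rho(\ferm_d/\F_q)$ with the analytic rank, and then apply Proposition \ref{theo.rank.bound} with $|G_d|=d^3$. The paper's own proof is a terse version of exactly this argument.
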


\begin{proof} For the duration of the proof, we choose $\Lambda_\ferm = G_d$ (note that $|\Lambda_\ferm| = d^3$). As we've seen in Example \ref{ex.fermat.P2}, one has $P(\Lambda_\ferm, T) = P_2(\ferm_d/\F_q, T)$.   Part (1) of the Artin-Tate conjecture \ref{conj.AT} implies that 
$\rho(\ferm_d/\F_q)$ equals the ``analytic rank'' $\ord_{T=q^{-1}}P_2(\ferm_d/\F_q, T)$. The corollary is then a direct consequence of Proposition \ref{theo.rank.bound} above (the implicit absolute constant $c_3$ can be chosen $c_3\leq 3$). 
\ProofEnd \end{proof}

One may compare the bound \eqref{eq.rank.bound} to the ``geometric'' bound $\rho(\ferm_d/\F_q) \leq \dim \H^2(\ferm_d)$ (see \cite{Igusa}), \ie{}
\[0\leq \rho(\ferm_d/\F_q) \leq \rho(\bar{\ferm_d}/\bar{\F_q})
%\leq b_2(\ferm_d/\F_q) = b_2(\ferm_d\otimes \bar{\F_q}) 
\leq \dim \H^2(\ferm_d) = (d-1)(d^2-3d+3)\leq (d-1)^3.\]
This latter bound does not ``see'' the ``growth'' of the rank of the Néron-Severi groups $\NS(\ferm_d\times_{k} k')$ in a tower of finite extensions $k'/k$ of $k=\F_q$. %as one passes from $k=\F_q$ to a finite extension $k'$ of $k$.
We can also prove that \eqref{eq.rank.bound} is asymptotically optimal:

\begin{prop}\label{prop.rankbound.optimal} For any finite field $\F_q$, % of odd characteristic,
 there are infinitely many integers $d'$, prime to $q$, %and an infinite sequence of $\Lambda_n \subset G_{d_n}$ 
such that 
\[\rho(\ferm_{d'}/\F_q)\gg_q \frac{ {d'}^3 }{\log d'},\]
where the implied constant is effective and depends only on $q$. 
\end{prop}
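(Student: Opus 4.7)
The plan is to exhibit an explicit sequence $d'_\nu \to \infty$ coprime to $q$ along which many orbits in $\O_q(G_{d'_\nu}^\circ)$ satisfy $\Ja(\ba) = q^{|\bA|}$, and then invoke Proposition \ref{prop.fermat.spval} together with Lemma \ref{lemm.expr.spval} applied to $\Lambda_\ferm = G_{d'_\nu}$ (see Example \ref{ex.fermat.P2}) to get
\[\rho(\ferm_{d'_\nu}/\F_q) = 1 + \big|\{\bA \in \O_q(G_{d'_\nu}^\circ) : \Ja(\ba) = q^{|\bA|}\}\big|.\]
Writing $q = p^f$, the natural choice is $d'_\nu := p^\nu + 1$ for $\nu \geq 1$: these are coprime to $p$ (hence to $q$) and tend to infinity. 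The antipodal congruence $p^\nu \equiv -1 \bmod d'_\nu$ is what will drive the argument.

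Since $q^{2\nu} = p^{2f\nu} \equiv 1 \bmod d'_\nu$, the order $o_\nu := o_q(d'_\nu)$ divides $2\nu$, so every orbit has length $|\bA| \leq o_\nu \leq 2\nu \leq 2 \log d'_\nu / \log p$. The key analytic input is the classical supersingularity theorem of Shioda-Katsura \cite{Shioda_Katsura}: since $d'_\nu \mid p^\nu + 1$, the Picard rank of $\ferm_{d'_\nu}$ over $\F_{p^{2\nu}}$ equals its full second Betti number. Via the Artin-Tate formalism (Theorem \ref{theo.shioda.AT}) this is equivalent to the identity $\Ja_{p^{2\nu}}(\ba) = +p^{2\nu}$ for every $\ba \in G_{d'_\nu}^\circ$. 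I would then transport this identity down to the relevant base field via the Hasse-Davenport relation \eqref{eq.hassedav.jacobi}: for orbits of maximal length $|\bA| = o_\nu$, the field $\F_{q^{o_\nu}}$ contains $\F_{p^{2\nu}}$ (because $f \cdot o_\nu = \mathrm{lcm}(f, 2\nu)$ is a multiple of $2\nu$), and writing $q^{o_\nu} = (p^{2\nu})^m$ yields
\[\Ja(\ba) = \Ja_{q^{|\bA|}}(\ba) = (\Ja_{p^{2\nu}}(\ba))^m = (+p^{2\nu})^m = +q^{|\bA|}.\]

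To finish, I need a large supply of orbits of maximal length $|\bA| = o_\nu$, which correspond exactly to the \emph{primitive} $\ba \in G_{d'_\nu}$ (those with $d_\ba = d'_\nu$). A M\"obius inversion gives
\[\big|\{\ba \in G_{d'_\nu} : d_\ba = d'_\nu\}\big| = \sum_{e \mid d'_\nu} \mu(e) (d'_\nu/e)^3 = (d'_\nu)^3 \prod_{\ell \mid d'_\nu}(1 - \ell^{-3}) \geq (d'_\nu)^3/\zeta(3).\]
After removing the at most $4(d'_\nu)^2$ degenerate elements (those with some $a_i = 0$), at least $\gg (d'_\nu)^3$ primitive elements lie in $G_{d'_\nu}^\circ$; grouping them into orbits of size $o_\nu$ produces $\gg (d'_\nu)^3/o_\nu \gg_q (d'_\nu)^3/\log d'_\nu$ orbits, each contributing $+1$ to $\rho(\ferm_{d'_\nu}/\F_q)$ by the preceding paragraph.

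The step I expect to carry the real weight is securing the \emph{positive} sign in $\Ja(\ba) = +q^{|\bA|}$, as opposed to merely $\pm q^{|\bA|}$. The antipodal condition $p^\nu \equiv -1 \bmod d'_\nu$ places complex conjugation $\sigma_{-1}$ inside $\sbgp{q}{d'_\nu}$, so Galois descent forces $\Ja(\ba)$ to be real, and combined with $|\Ja(\ba)| = q^{|\bA|}$ this already gives $\Ja(\ba) \in \{\pm q^{|\bA|}\}$. Pinning down that the sign is uniformly $+$ is precisely the content of the Shioda-Katsura supersingularity theorem for $\ferm_{d'_\nu}$ over $\F_{p^{2\nu}}$ (equivalently, the Tate conjecture in that supersingular range), which is therefore the nontrivial ingredient of the whole argument.
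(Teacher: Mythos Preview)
Your argument is correct and rests on the same key input as the paper's proof --- the Shafarevich--Tate/Shioda--Katsura supersingularity theorem --- but your execution is more roundabout. The paper takes $d_n = q^n+1$ rather than $p^\nu+1$; with that choice the Shafarevich--Tate result (cited there as \cite[Prop.~8.1]{Ulmer_LargeRk}) applies \emph{directly over $\F_q$} and yields $\Ja(\ba)=q^{|\bA|}$ for \emph{every} $\ba\in G_{d_n}^\circ$, with no Hasse--Davenport lift and no need to isolate primitive elements. One then gets $\rho(\ferm_{d_n}/\F_q)=|\O_q(G_{d_n})|$ on the nose, and the crude bound $|\O_q(G_{d_n})|\geq (|G_{d_n}|-1)/o_q(d_n)=(d_n^3-1)/(2n)$ (using $o_q(q^n+1)=2n$) finishes immediately.

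Your route via $d'_\nu=p^\nu+1$ works but forces you to (i) descend the Jacobi-sum identity from $\F_{p^{2\nu}}$ to $\F_{q^{o_\nu}}$ via Hasse--Davenport, which in turn requires the (true but unstated) fact $o_p(p^\nu+1)=2\nu$ to justify your claim $f\cdot o_\nu=\mathrm{lcm}(f,2\nu)$; and (ii) restrict to primitive $\ba$ and count them by M\"obius inversion. Both detours are avoidable by working with $q^n+1$ from the start. Note also that the relation \eqref{eq.hassedav.jacobi} as written is for towers over $\F_q$, whereas you are implicitly invoking it over $\F_p$; this is of course the same classical statement, but strictly speaking not what \eqref{eq.hassedav.jacobi} says.
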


\begin{proof}For any integer $n\geq1$, put $d_n = q^n+1$. %Then $d_n$ is coprime to $q$ 
For such $d_n$, it is possible to prove that $o_q(d_n) = 2n$ (see \cite[Lemma 8.2]{Ulmer_LargeRk}). Moreover, a theorem of Shafarevich and Tate shows that $\Ja(\ba) = q^{|\bA|}$ for all $\ba \in G_{d_n}^\circ$ (see \cite{ShaTate_Rk} or \cite[Prop. 8.1]{Ulmer_LargeRk}). Combining this with part (1) of the Artin-Tate Conjecture \ref{conj.AT} and Lemma \ref{lemm.expr.spval}, we see that 
\[\rho(\ferm_{d_n}/\F_q) = \ord_{T=q^{-1}}P_2(\ferm_{d_n}/\F_q, T) = |\O_q(G_{d_n})|. \]
On the other hand, with an argument similar to the proof of Lemma \ref{lemm.prel.actq}, it is possible to bound $|\O_q(G_{d_n})|$ from below. For any divisor $d'$ of $d_n$, let $H_{d'} = \left\{\ba=(a_0, \dots, a_3) \in G_{d_n}\ \big| \ \gcd(d,a_0, \dots, a_3) = d_n/d'\right\}$. 
Then 
\[ |\O_q(G_{d_n})| 
\geq \sum_{\substack{d'\mid d \\ d'\geq 2}} \frac{|H_{d'}|}{o_q(d')} 
\geq \frac{1}{o_q(d_n)} \cdot \sum_{\substack{d'\mid d \\ d'\geq 2}} |H_{d'}| 
=\frac{|G_{d_n}|-1}{o_q(d_n)}
= \frac{d_n^3-1}{2n}
%\geq \log q \cdot \frac{d_n^3 -1}{\log d_n}
\gg_q \frac{d_n^3}{\log d_n}.
\]
This proves that the integers $(d_n)_{n\geq 1}$ satisfy the statement of the Proposition. %where the implicit constant depends at most on $q$.
\ProofEnd \end{proof}

\section[Lower bound(s)]{Lower bound(s) on $P^\ast(\Lambda)$}\label{sec.lowerbound}

This section is devoted to proving 
lower bounds on $P^\ast(\Lambda)$.  For the rest of this section, we fix a finite field $\F_q$ of characteristic $p$ and an integer $d\geq 2$ coprime with $q$. Let us start by giving a ``trivial'' lower bound on~$P^\ast(\Lambda)$, which could be called a ``Liouville type'' lower bound:

\begin{prop}\label{prop.triv.lowerbnd} 
Let $d\geq 2$ be an integer coprime with $q$, and $\Lambda$ be a nonempty $(\Z/d\Z)^\times$-stable subset of $G_d$.
%Let $\Lambda$ be a nonempty subset of $G_d$. We assume that $\Lambda$ is stable under the action of $(\Z/d\Z)^\times$ by multiplication.
% and, as above, we consider the polynomial $P(\Lambda, T)\in\Z[T]$ and its special value
The special value $P^\ast(\Lambda)$ of the polynomial $P(\Lambda, T)$ associated to $\Lambda$ satisfies: %. Then one has 
\begin{equation}\label{eq.triv.lowerbnd}
\log |P^\ast(\Lambda)| \geq -\log q \cdot |\Lambda|
\qquad \text{\ie{} }\quad 
|P^\ast(\Lambda)|\geq \frac{1}{q^{|\Lambda|}}.
\end{equation}
\end{prop}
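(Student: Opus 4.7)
\begin{proof}
The idea is purely formal: $P^\ast(\Lambda)$ is a nonzero rational number whose denominator is a bounded power of $q$, so the desired inequality follows from the trivial estimate that a nonzero integer has absolute value $\geq 1$.

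More precisely, recall from the setup in Section \ref{sec.setting} that $P(\Lambda, T)\in\Z[T]$ and that $P^\ast_\Lambda(T) := P(\Lambda, T)\cdot(1-qT)^{-\rho}$ again lies in $\Z[T]$, with $P^\ast(\Lambda) = P^\ast_\Lambda(q^{-1})\in\Z[q^{-1}]\smallsetminus\{0\}$. Further, I would bound the degree of $P^\ast_\Lambda$ by
\[ \deg P^\ast_\Lambda(T) = \deg P(\Lambda, T) - \rho \leq \sum_{\bA\in\O_q(\Lambda)} |\bA| = |\Lambda|, \]
using item \eqref{item.i.actq} of Lemma \ref{lemm.prel.actq}.

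Writing $P^\ast_\Lambda(T) = \sum_{k=0}^{N} c_k T^k$ with $c_k\in\Z$ and $N\leq |\Lambda|$, one obtains
\[ q^{|\Lambda|}\cdot P^\ast(\Lambda) = \sum_{k=0}^{N} c_k \cdot q^{|\Lambda|-k} \in \Z, \]
since $|\Lambda|-k\geq 0$ for each $k\leq N$. Because $P^\ast(\Lambda)$ is nonzero, this integer is a nonzero integer, so its absolute value is at least $1$. Dividing by $q^{|\Lambda|}$ gives $|P^\ast(\Lambda)|\geq q^{-|\Lambda|}$, which is the announced inequality.
\end{proof}

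The only subtle point here is to justify that $P^\ast_\Lambda(T)\in\Z[T]$ (rather than merely $\Q[T]$), but this has already been recorded in Section \ref{sec.setting}: the integrality follows because $P(\Lambda, T)$ has integer coefficients and is divisible in $\Z[T]$ by $(1-qT)^\rho$ (by the definition of $\rho$ and Gauss's lemma, since $1-qT$ is primitive). No further input is needed.
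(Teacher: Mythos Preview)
Your proof is correct and follows essentially the same Liouville-type argument as the paper: both observe that $P^\ast_\Lambda(T)\in\Z[T]$ has degree at most $|\Lambda|$, so $q^{|\Lambda|}\cdot P^\ast(\Lambda)$ is a nonzero integer, hence of absolute value $\geq 1$. Your version is arguably a touch cleaner, since you avoid the paper's (unnecessary and not quite justified) claim that the numerator $N$ is prime to $q$, and you explicitly invoke Gauss's lemma for the integrality of $P^\ast_\Lambda(T)$.
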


\begin{proof} By definition, $P^\ast(\Lambda)$ is the value at $T=q^{-1}$ of a polynomial with integral coefficients, namely $P^\ast_{\Lambda}(T)$. 
Thus, it is obvious that $|P^\ast_\Lambda(q^{-1})| \in \Z[q^{-1}]$ is of the form $N/q^{\deg P^\ast_\Lambda(T)}$ for some integer $N\geq 0$ prime to $q$. 
Since $P^\ast_\Lambda(T)$ does not vanish at $T=q^{-1}$, the numerator $N$ is actually positive. Hence $N\geq 1$ and the lower bound follows from the simple observation that $\deg P^\ast_{\Lambda}(T)\leq \deg P(\Lambda, T)\leq |\Lambda|$.
\ProofEnd \end{proof}

However, the lower bound \eqref{eq.triv.lowerbnd} is sometimes far from the truth.  Indeed, assume that $d$ divides $q^n+1$ for some $n\in\Z_{\geq 1}$, %(hypothesis (H2) in section \ref{sec.statement})
then the theorem of Shafarevich and Tate  mentioned above (see \cite{ShaTate_Rk})  tells us that $\Ja(\ba)=q^{|\bA|}$ for all $\ba\in G_d^\circ$. Therefore, in this case, for any $\Lambda\subset G_d$ as above, the set $\Lambda^\ast$ is empty and, by Lemma \ref{lemm.expr.spval}, the special value $P^\ast(\Lambda)$ is a positive integer:
\[\log |P^\ast(\Lambda)|  = \log \prod_{\Lambda\in\O_q(\Lambda_0) }|\bA| \geq0.\]
%In particular, this certainly implies that $\log |P^\ast(\Lambda)| \geq - o(|\Lambda|)$ when $|\Lambda|\to\infty$.
We now set out to prove a lower bound on $P^\ast(\Lambda)$ refining \eqref{eq.triv.lowerbnd} in a more  general case: 
\begin{theo} \label{theo.lowerbnd} 
Let $d\geq 2$ be an integer coprime with $q$, and $\Lambda$ be a nonempty $(\Z/d\Z)^\times$-stable subset of $G_d$. Assume that, for all $\epsilon\in(0,1/4)$, there exists $u\in(0,1)$ such that
\begin{equation}\label{eq.hyp}\tag{H}
\frac{1}{|\Lambda|}\cdot \left| \left\{\ba=(a_0, \dots, a_3)\in\Lambda \ \big| \ d>\max_i\{\gcd(d,a_i)\} > d^u\right\}\right| \leq c' \cdot \left(\frac{\log\log d}{\log d}\right)^{1/4-\epsilon},
\end{equation}
for some constant $c'$.
%Assume that one of the following conditions holds:\todo{update}
%\begin{enumerate}[(H1)]
%\item $d$ is prime, and $\Lambda\subset G_d$ is any nonempty $(\Z/d\Z)^\times$-stable subset.
%\item $d$ divides $q^n+1$ for some $n\geq 1$, and $\Lambda\subset G_d$ is any nonempty $(\Z/d\Z)^\times$-stable subset. 
%\item $d$ is any integer coprime with $q$, and $\Lambda=G_d$.
%\end{enumerate}
Then, for all $\epsilon\in(0,1/4)$, 
the special value $P^\ast(\Lambda)$ of the polynomial $P(\Lambda, T)$ %at $T=q^{-1}$ 
satisfies 
\begin{equation}\label{eq.lowerbnd}
\log |P^\ast(\Lambda)| \gg - \log q \cdot |\Lambda| \cdot \left(\frac{\log\log  d}{\log d}\right)^{1/4-\epsilon},
\end{equation}
the implicit (positive) constant being effective (and depending at most on $p$, $\epsilon$, $u$ and $c'$). %\todo{check dependency}%(it can be chosen $\leq 4 (\log p)^{1/6}$).
\end{theo}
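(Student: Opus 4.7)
The plan is to bound the $p$-adic denominator of the rational number $P^\ast(\Lambda)$ from above, using Stickelberger's theorem to express this denominator combinatorially and then invoking Theorem~\ref{theo.equidis} to control the resulting sum. More precisely, since $P^\ast_\Lambda(T) \in \Z[T]$ does not vanish at $T = q^{-1}$, we may write $P^\ast(\Lambda) = N/p^e$ with $N \in \Z_{>0}$ coprime to $p$ and $e \geq 0$; then $|P^\ast(\Lambda)| \geq p^{-e}$, so the theorem reduces to proving
\[ e \ll \tfrac{\log q}{\log p}\cdot |\Lambda|\cdot \bigl(\tfrac{\log\log d}{\log d}\bigr)^{1/4-\epsilon}. \]
Starting from the factorisation of $P^\ast(\Lambda)$ provided by Lemma~\ref{lemm.expr.spval}, the product $\prod_{\bA\in\O_q(\Lambda_0)} |\bA|$ is a positive integer and contributes nothing to $e$; all of $e$ thus comes from the factors $(1-\Ja(\ba)/q^{|\bA|})$ with $\bA \in \O_q(\Lambda^\ast)$. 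Passing to $\Q(\zeta_d)$ with a prime $\gP$ above $p$ and using the triangle inequality gives $e\cdot \ord_\gP(p) \leq \sum_{\bA \in \O_q(\Lambda^\ast)} \max\bigl(0,\, |\bA|\ord_\gP(q) - \ord_\gP(\Ja(\ba))\bigr)$.

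Stickelberger's theorem, applied to the Gauss sums underlying $\Ja(\ba)$, expresses $\ord_\gP(\Ja(\ba))$ as a sum of base-$p$ digit sums of the integers $a_i(q^{|\bA|}-1)/d$. Rewriting each digit sum as an average of fractional parts $\{t a_i/d\}$ over $t$ in the $\sbgp{p}{d}$-orbit of $1$, and then using the $(\Z/d\Z)^\times$-stability of $\Lambda$ to symmetrize via an outer average over $g \in (\Z/d\Z)^\times$, one eventually obtains an estimate of the shape
\[
e \ll_p \sum_{\ba \in \Lambda^\ast}\ \sum_{i=0}^{3}\ \frac{1}{\phi(d)}\sum_{g \in (\Z/d\Z)^\times} \left|\, \int_0^1 F(t)\, dt - \frac{1}{|\sbgp{p}{d}|} \sum_{h \in \sbgp{p}{d}} F\!\left(\left\{\tfrac{a_i g h}{d}\right\}\right)\, \right|,
\]
with $F(t) = t$. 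This is essentially the preparatory Theorem~\ref{theo.proto.lowerbnd} referred to in the introduction, and it recasts the problem as a sum of instances of the discrepancy controlled by Theorem~\ref{theo.equidis}.

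To conclude, split $\Lambda$ into $\Lambda_{\rm bad} = \{\ba \in \Lambda : d^u < \max_i \gcd(d, a_i) < d\}$, the subset named in hypothesis~(H), and its complement $\Lambda_{\rm gen}$. By~(H), $|\Lambda_{\rm bad}| \ll |\Lambda|(\log\log d/\log d)^{1/4-\epsilon}$; since each summand above is trivially $O(1)$, the contribution of $\Lambda_{\rm bad}$ is already of the correct order. Any $\ba \in \Lambda_{\rm gen} \cap \Lambda^\ast$ must then satisfy $\max_i \gcd(d, a_i) \leq d^u$. For each such $\ba$ and each $i$, the reduced modulus $d' = d/\gcd(d, a_i) \geq d^{1-u}$ makes $a_i/\gcd(d, a_i)$ coprime to $d'$ and ensures $|\sbgp{p}{d'}| \geq (1-u)\log d/\log p$, well beyond $\log\log d'$; Theorem~\ref{theo.equidis} then applies and yields a bound of order $(\log\log d/\log d)^{1/4-\epsilon}$ per inner summand. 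Summing over the at most $4|\Lambda|$ such summands completes the proof. The main obstacle is expected to be the Stickelberger step: one must carry out the $\gP$-adic computation uniformly in $\ba$ — the orbit length $|\bA|$ and the effective modulus $d_\ba = d/\gcd(d, a_0, \ldots, a_3)$ vary from term to term — and rearrange the result into the precise averaging identity controlled by Theorem~\ref{theo.equidis}, after which the equidistribution input does the rest.
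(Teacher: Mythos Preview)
Your proposal is correct and follows essentially the same route as the paper: reduce to bounding the $p$-adic denominator of $P^\ast(\Lambda)$, use Stickelberger to express $\ord_\gP \Ja(\ba)$ as an average of fractional parts, introduce the outer average over $g\in(\Z/d\Z)^\times$, and then split $\Lambda^\ast$ into a ``good'' part (where Theorem~\ref{theo.equidis} applies with modulus $d'\geq d^{1-u}$) and the ``bad'' part controlled by hypothesis~\eqref{eq.hyp}. The only cosmetic difference is that the paper obtains the $g$-average by computing the full norm $\norm_{K/\Q}\big(1-\Ja(\ba)/q^{|\bA|}\big)$ (Lemma~\ref{lemm.norm.jacobi}), i.e.\ by summing over all primes above $p$, whereas you work at a single prime $\gP$ and then symmetrize using the $(\Z/d\Z)^\times$-stability of $\Lambda^\ast$; the two manoeuvres yield the same combinatorial quantity $w_p(\Lambda,d)$ of Theorem~\ref{theo.proto.lowerbnd}.
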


%We can readily prove a stronger bou

%\begin{proof}[of Theorem \ref{theo.lowerbnd} in case (H2)] \end{proof}

%We now set out to prove% under the following precise form:

%The proof of this theorem is the main goal of the rest of this section
%We have already treated the case where (H2) holds, and actually given a stronger bound in that case. 
The rest of this section is devoted to the proofs of Theorem \ref{theo.lowerbnd} and of Corollary \ref{theo.BNDS.GEN}.

\subsection{Lower bound on products}

Our first step towards Theorem \ref{theo.lowerbnd} will be to prove: 

\begin{theo}\label{theo.proto.lowerbnd}
Let $d\geq 2$ be an integer coprime with $q$, and $\Lambda$ be any nonempty $(\Z/d\Z)^\times$-stable subset of $G_d$. The special value $P^\ast(\Lambda)$ satisfies 
\begin{equation}\label{eq.proto.lowerbnd}
\log |P^\ast(\Lambda)| \geq -\log q \cdot w_p(\Lambda, d) \qquad \text{\ie{} }\quad 
|P^\ast(\Lambda)|\geq \frac{1}{q^{w_p(\Lambda, d)}},
\end{equation}
where $w_p(\Lambda, d) $ is given by $w_p(\Lambda, d):= \sum_{\ba\in\Lambda^\circ} w_p(\ba,d)$ with $\Lambda^\circ= \Lambda\cap G_d^\circ$ %as in Lemma \ref{lemm.expr.spval} 
and
  \[ %0 \leq 
  w_p(\ba, d):=  \frac{1}{\phi(d)} \sum_{g\in(\Z/d\Z)^\times}  \max\left\{ 0, 
 \sum_{i=0}^3\left(  - \frac{1}{2} + \frac{1}{|\sbgp{p}{d}|}\sum_{\pi\in\sbgp{p}{d}} 
\partfrac{\frac{a_i  \pi g}{d}} \right) 
 \right\}. \]
\end{theo}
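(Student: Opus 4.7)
The plan is to bound the $p$-adic valuation of $P^\ast(\Lambda)$ from below via a Galois average of its $\gP$-adic valuations in $\Q(\zeta_d)$, and then to invoke Stickelberger's theorem on Jacobi sums. Since $P^\ast_\Lambda(T) \in \Z[T]$, one refines the observation from Proposition \ref{prop.triv.lowerbnd} to write $P^\ast(\Lambda) = N/q^w$ with $N \in \Z_{\geq 1}$ coprime to $p$ and $w \in \Z_{\geq 0}$. Setting $q = p^f$, the inequality \eqref{eq.proto.lowerbnd} is equivalent to $w \leq w_p(\Lambda, d)$, \ie{} to $\ord_p(P^\ast(\Lambda)) \geq - f \cdot w_p(\Lambda, d)$.

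Because $\gcd(p,d)=1$, the prime $p$ is unramified in $\Q(\zeta_d)$. Fix a prime $\gP$ of $\Z[\zeta_d]$ above $p$ and, for each $g \in (\Z/d\Z)^\times$, let $\gP_g := \sigma_g(\gP)$; as $g$ varies, each prime above $p$ is hit exactly $|\sbgp{p}{d}|$ times. Since $P^\ast(\Lambda) \in \Q$, we have tautologically
\[ \ord_p(P^\ast(\Lambda)) = \frac{1}{\phi(d)} \sum_{g \in (\Z/d\Z)^\times} \ord_{\gP_g}(P^\ast(\Lambda)). \]
Applying Lemma \ref{lemm.expr.spval} and the ultrametric inequality (the integer factors $|\bA|$ for $\bA \in \O_q(\Lambda_0)$ contributing non-negatively, and $|\Ja(\ba)| = q^{|\bA|}$ for $\bA \in \O_q(\Lambda^\ast)$) gives
\[ \ord_{\gP_g}(P^\ast(\Lambda)) \geq - \sum_{\bA \in \O_q(\Lambda^\ast)} \max\bigl(0,\, f|\bA| - \ord_{\gP_g}(\Ja(\ba))\bigr). \]

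The heart of the argument is now Stickelberger's theorem for Jacobi sums; combined with the Hasse--Davenport relation \eqref{eq.hassedav.jacobi}, it yields
\[ \ord_\gP(\Ja(\ba)) = \frac{f|\bA|}{|\sbgp{p}{d}|} \sum_{\pi \in \sbgp{p}{d}} \left( 3 - \sum_{i=0}^3 \left\{ \frac{\pi a_i}{d} \right\}\right). \]
By the Galois-equivariance \eqref{eq.actiongal.jacobi}, one has $\ord_{\gP_g}(\Ja(\ba)) = \ord_\gP(\Ja(g^{-1} \ba))$; simple algebra using $\sum_i a_i \equiv 0 \pmod d$ then produces
\[ f|\bA| - \ord_{\gP_g}(\Ja(\ba)) = f|\bA| \cdot \sum_{i=0}^3 \left( -\tfrac{1}{2} + \frac{1}{|\sbgp{p}{d}|} \sum_{\pi \in \sbgp{p}{d}} \left\{ \frac{\pi g^{-1} a_i}{d} \right\}\right). \]
Substituting in the earlier bound and averaging over $g$ (after the change of variables $g \mapsto g^{-1}$), noting that $w_p(\cdot, d)$ is constant on $q$-orbits (so $|\bA| \cdot w_p(\ba, d) = \sum_{\ba' \in \bA} w_p(\ba', d)$), we arrive at
\[ -\ord_p(P^\ast(\Lambda)) \leq f \sum_{\ba \in \Lambda^\ast} w_p(\ba, d) \leq f \cdot w_p(\Lambda, d), \]
the final inequality using $\Lambda^\ast \subseteq \Lambda^\circ$ and $w_p(\cdot, d) \geq 0$. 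The main technical hurdle is to set up Stickelberger's theorem with the correct normalization for the Jacobi sums of Definition \ref{defi.jacobi}, which involve an auxiliary norm from $\F_Q$ down to $\F_{q^{|\bA|}}$; once that is handled via Hasse--Davenport, everything else is careful but routine combinatorial bookkeeping.
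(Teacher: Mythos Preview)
Your proof is correct and follows essentially the same approach as the paper: both reduce to bounding the $p$-power in the denominator of $P^\ast(\Lambda)$ via a Galois average of $\gp$-adic valuations, then invoke Stickelberger's theorem (the paper's Lemma~\ref{lemm.stickelberger}) to evaluate $\ord_\gp \Ja(\ba)$, and finally unwind orbits using $\Lambda^\ast\subset\Lambda^\circ$. The only difference is packaging: where the paper isolates a separate Lemma~\ref{lemm.norm.jacobi} that constructs an auxiliary ideal $I_\ba$ dividing $(q^{|\bA|}-\Ja(\ba))$ and takes its norm, you obtain the same bound more directly from the ultrametric inequality $\ord_{\gP_g}(q^{|\bA|}-\Ja(\ba))\geq \min(f|\bA|,\ord_{\gP_g}\Ja(\ba))$, which is a mild streamlining but not a genuinely different argument.
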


We note the (surprising?) fact that $w_p(\Lambda,d)$ does not depend on $q$, % anymore, 
but only on $p$.
Of course, the usefulness of Theorem \ref{theo.proto.lowerbnd} relies on our ability to find a good upper bound on $w_p(\Lambda, d)$: indeed,
the immediate upper bound $w_p(\Lambda, d)\leq |\Lambda|$ only yields the trivial lower bound \eqref{eq.triv.lowerbnd} of Proposition \ref{prop.triv.lowerbnd}. 
This question is addressed the next subsection, where we prove that $w_p(\Lambda, d)$ is $o(|\Lambda|)$ (as $|\Lambda|\to\infty$), under the assumption of hypothesis \eqref{eq.hyp}.

%To deduce Theorem \ref{theo.lowerbnd}, we prove that hypothesis \eqref{eq.hyp} implies that $w_p(\Lambda, d)$ is $o(|\Lambda|)$ (as $|\Lambda|\to\infty$): this is  done in the second subsection (with the help of our Theorem \ref{theo.equidis}). % \ref{sec.equidis}.

\begin{proof} As we have seen (in Lemma \ref{lemm.expr.spval}), the special value $P^\ast(\Lambda)$ associated to $\Lambda$ has the following shape: %admits the following expression:
\[ 
P^\ast(\Lambda) 
= \prod_{\bA\in\O_q(\Lambda_0)} |\bA| \cdot \prod_{\bA\in\O_q(\Lambda^\ast)} \left(1-\frac{\Ja(\ba)}{q^{|\bA|}}\right)
= (\text{integer})\cdot \prod_{\bA\in\O_q(\Lambda^\ast)} \left(1-\frac{\Ja(\ba)}{q^{|\bA|}}\right).\]
%It is then easy to see that this expression is of the form \[ P^\ast(\Lambda)  = (\text{positive integer})\cdot \prod_{\bA\in\O_q(\Lambda^\ast)} \left(1-\frac{\Ja_\ba}{q^{|\bA|}}\right).\]
From this, we deduce that 
\[\log|P^\ast(\Lambda)| 
= \log|\text{integer}| + \log\prod_{\bA\in\O_q(\Lambda^\ast)} \left|1-\frac{\Ja(\ba)}{q^{|\bA|}}\right|
\geq \log\prod_{\bA\in\O_q(\Lambda^\ast)} \left|1-\frac{\Ja(\ba)}{q^{|\bA|}}\right|,  \]
and we shall now find lower bounds on the product  $\Pi^\ast_\Lambda:= \prod_{\bA\in\O_q(\Lambda^\ast)} \left(1-\Ja(\ba)q^{-|\bA|}\right)$ on the right-hand side. This product $\Pi^\ast_\Lambda$ is a nonzero element of $\Z[q^{-1}]$ and we need to bound from above the exponent of $q$ appearing in its denominator. 
% (because $\spval{\Lambda}$ is).  It can actually be proved that $\Pi^\ast_\Lambda$ is positive, but that won't be used here.
We split the proof of Theorem \ref{theo.proto.lowerbnd} into three parts.

First, we recall the following proposition of Shioda (see \cite[Prop. 2.1]{Shioda_jacobi}) concerning Jacobi sums. For the convenience of the reader, we give a proof in our notations below.

\begin{lemm}[Shioda]\label{lemm.norm.jacobi}
 Let $\F_q$ be a finite field, % of odd characteristic $p$. Let 
 $d\geq 2$ be an integer coprime with $q$,
  and $\ba\in G_d^\circ\subset G_d$ (so that $|\Ja(\ba)| =q^{|\bA|}$). % but $\Ja(\ba)\neq q^{|\bA|}$). 
%Set $q'=q^{|A|}$ and l
Set $v_\bA=\ord_p (q^{|\bA|}) = [\F_{q^{|\bA|}}: \F_p]$.
Let $\gp$ be the prime ideal of $K:=\Q(\zeta_d)$ that lies under ${\gP}$ (see section   \ref{sec.notations}) and denote by $\ord_\gp: K^\ast\to\Z$ the $\gp$-adic valuation. 
Then either $\Ja(\ba)=q^{|A|}$, or 
\begin{equation}\label{eq.norm.jacobi}
\log\norm_{K/\Q}\left(1-\frac{\Ja(\ba)}{q^{|\bA|}}\right)
\geq - (\log q^{|\bA|})\cdot \sum_{g\in(\Z/d\Z)^\times} \max\left\{ 0, 1- \frac{\ord_{\gp} \Ja(g\cdot \ba)}{ v_{\bA} }\right\}.
\end{equation}
\end{lemm}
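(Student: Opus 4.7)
The plan is to reduce everything to a lower bound on the norm of the algebraic integer $\beta := q^{|\bA|} - \Ja(\ba) \in \O_K$. Since $1 - \Ja(\ba)/q^{|\bA|} = \beta/q^{|\bA|}$, taking $\norm_{K/\Q}$ yields
\[
\log \norm_{K/\Q}\!\left(1 - \tfrac{\Ja(\ba)}{q^{|\bA|}}\right) = \log|\norm_{K/\Q}(\beta)| - \phi(d)\cdot\log q^{|\bA|}.
\]
The hypothesis $\Ja(\ba) \neq q^{|\bA|}$ forces $\beta \neq 0$, hence $\norm_{K/\Q}(\beta)$ is a nonzero rational integer. Its prime-to-$p$ part is then a positive integer, so $\log|\norm_{K/\Q}(\beta)| \geq (\log p)\cdot\ord_p\norm_{K/\Q}(\beta)$, and the problem boils down to bounding $\ord_p \norm_{K/\Q}(\beta)$ from below.

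The key point is that $p \nmid d$, so $p$ is unramified in $K = \Q(\zeta_d)$: all primes of $\O_K$ above $p$ are Galois conjugates of $\gp$, with decomposition group equal to $\sbgp{p}{d}$ and common residue degree $f = |\sbgp{p}{d}|$. Using the standard formula for the norm of an ideal and the fact that each of the $\phi(d)/f$ primes above $p$ has exactly $f$ preimages under $g \mapsto \sigma_g^{-1}(\gp)$, I rewrite
\[
\ord_p \norm_{K/\Q}(\beta) = \sum_{\gp'\mid p} f\cdot\ord_{\gp'}(\beta) = \sum_{g\in(\Z/d\Z)^\times}\ord_\gp(\sigma_g(\beta)).
\]
Equation \eqref{eq.actiongal.jacobi} then identifies each summand: $\sigma_g(\beta) = q^{|\bA|} - \Ja(g\cdot\ba)$, since $\sigma_g$ fixes $q^{|\bA|}\in\Q$.

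To conclude, I apply the ultrametric inequality termwise. Because $p$ is unramified, $\ord_\gp(q^{|\bA|}) = [\F_{q^{|\bA|}}:\F_p] = v_\bA$, and hence
\[
\ord_\gp(q^{|\bA|} - \Ja(g\cdot\ba)) \geq \min\{v_\bA,\ \ord_\gp(\Ja(g\cdot\ba))\}.
\]
Combined with the elementary identity $\min\{v_\bA, x\} = v_\bA - \max\{0, v_\bA - x\}$, this transforms the sum into
\[
\ord_p\norm_{K/\Q}(\beta) \geq \phi(d)\cdot v_\bA - \sum_{g\in(\Z/d\Z)^\times}\max\{0,\ v_\bA - \ord_\gp(\Ja(g\cdot\ba))\}.
\]
Multiplying through by $\log p$ and invoking $v_\bA\log p = \log q^{|\bA|}$ exactly cancels the term $\phi(d)\log q^{|\bA|}$ from the opening reduction, delivering \eqref{eq.norm.jacobi}. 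The only step requiring real care is the Galois-theoretic bookkeeping used to pass from primes above $p$ to a sum indexed by $(\Z/d\Z)^\times$; after that, everything is direct manipulation, and the essential content of the lemma is just the ultrametric inequality combined with unramifiedness of $p$ in $\Q(\zeta_d)$.
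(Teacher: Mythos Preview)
Your proof is correct and follows essentially the same approach as the paper's: both reduce to bounding the norm of $\beta = q^{|\bA|} - \Ja(\ba)$ from below via the ultrametric inequality at each prime above $p$, using the Galois action \eqref{eq.actiongal.jacobi} to reindex the sum over $(\Z/d\Z)^\times$. The only cosmetic difference is that the paper packages the ultrametric step by constructing an explicit ideal $I_\ba = \prod_i \gp_i^{\min\{v_\bA,\,\ord_\gp\Ja(g_i\cdot\ba)\}}$ dividing $(\beta)$ and comparing ideal norms, whereas you compute $\ord_p\norm_{K/\Q}(\beta)$ directly from the local-to-global norm formula; the underlying mathematics is identical.
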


\begin{proof} Fix a set $g_1=1, g_2, \dots, g_s \in (\Z/d\Z)^\times$ of coset representatives of $\sbgp{p}{d}$ in $(\Z/d\Z)^\times$ (hence $s=\phi(d)/o_p(d)$, where $o_p(d)=|\sbgp{p}{d}|$ is the multiplicative order of $p\bmod{d}$). For any $g\in(\Z/d\Z)^\times$, we denote by $\sigma_g\in\Gal(K/\Q)$ the corresponding automorphism of $K$ in the usual isomorphism. Now let $\gp_i:= (\sigma_{g_i})^{-1}(\gp)$ for $i=1, \dots, s$: it is well-known that $p$ decomposes as $p\cdot \Z[\zeta_d] = \gp_1\cdot \dotso\cdot \gp_s$ in $\Z[\zeta_d]$ (see \cite[Chap. 13]{IR}) and that $\norm\gp_i = \norm \gp = p^{o_p(d)}$ for all $i$. Note that $\ord_\gp q^{|\bA|} = v_\bA$.

It is then clear that $q^{|\bA|}\cdot\Z[\zeta_d] =  \prod_{i=1}^s \gp_i^{v_\bA}$. 
Since $|\Ja(\ba)|=q^{|\bA|} = p^{v_\bA}$, the integral ideal of $\Z[\zeta_d]$ generated by $\Ja(\ba)$ is concentrated above $p$: its decomposition as a product of prime ideals takes the form $\Ja(\ba) \cdot \Z[\zeta_d] =  \prod_{i=1}^s\gp_i^{\ord_{\gp_i}\Ja(\ba)} $. 
Note that, for all $i\in\lint 1,s \rint$,
\[\ord_{\gp_i}\Ja(\ba)
= \ord_{\sigma_{g_i}^{-1}(\gp)} \Ja(\ba)
= \ord_{\gp}( \sigma_{g_i}\Ja(\ba))
= \ord_{\gp} \Ja(g_i\cdot\ba),
\]
where the last equality follows from \eqref{eq.actiongal.jacobi}.
%the easily-checked identity: $\sigma_t (\Ja(\ba)) = \Ja(t\cdot\ba)$ for any $ t\in(\Z/d\Z)^\times$ and $\ba\in G_d$. 
Let us now assume that  $\Ja(\ba)\neq q^{|\bA|}$ and set
 \[I_\ba = \prod_{i=1}^s \gp_i^{\min\{v_\bA, \ord_{\gp}\Ja(g_i\cdot \ba)\}}
= \prod_{g\in(\Z/d\Z)^\times/\sbgp{p}{d}} (\sigma_g^{-1}\gp)^{\min\{v_\bA, \ord_{\gp}\Ja(g\cdot \ba)\}}.
\]
By construction, $I_\ba$ is an integral ideal that divides the nonzero ideal generated by $(q^{|\bA|}-\Ja(\ba))$ in $\Z[\zeta_d]$. 
%In particular, $\norm I_\ba$ divides $\norm_{K/\Q}(Q-\Ja_\ba)$ in $\Z$ and we have
In particular, $\norm I_\ba \leq \norm_{K/\Q}(q^{|\bA|}-\Ja(\ba))$ in $\Z$, which yields 
\[\norm_{K/\Q}\left(1- \frac{\Ja(\ba)}{q^{|\bA|}}\right)
= \frac{\norm_{K/\Q}(q^{|\bA|}-\Ja(\ba))}{\norm_{K/\Q}(q^{|\bA|})}
\geq \frac{\norm I_\ba}{\norm_{K/\Q}( q^{|\bA|})}
 = \frac{1}{q^{|\bA|\cdot\phi(d)} \cdot (\norm I_\ba )^{-1}}. \]
%Since $\ba\in\Lambda^\ast$,
A straightforward computation shows that 
\[\log\norm_{K/\Q}\left(1- \frac{\Ja(\ba)}{q^{|\bA|}}\right)
\geq - \log(q^{|\bA|\cdot\phi(d)} \cdot (\norm I_\ba )^{-1})
= -\log q^{|\bA|} \cdot o_p(d) \cdot \sum_{i=1}^s \max\left\{ 0, 1- \frac{\ord_\gp \Ja(g_i\cdot \ba)}{v_\bA}\right\}. \]
 % \sum_{t\in(\Z/d\Z)^\times}\max\left\{ 0, 1- \frac{\ord_\gp \Ja_{t\cdot \ba}}{v}\right\}. 
Finally, we use our choice of $g_i$ as representatives of $(\Z/d\Z)^\times/\sbgp{p}{d}$ and the fact that $\Ja(p^j \cdot \ba)=\Ja(\ba)$ for all $j\geq 0$ to arrive at the expression announced in the statement of the lemma:
\begin{align*}
o_p(d) \cdot \sum_{i=1}^s \max\left\{ 0, 1- \frac{\ord_\gp \Ja(g_i\cdot \ba)}{v_\bA}\right\}
&= |\sbgp{p}{d}|\cdot \sum_{g\in(\Z/d\Z)^\times/\sbgp{p}{d}}\max\left\{ 0, 1- \frac{\ord_\gp \Ja(g\cdot \ba)}{v_\bA}\right\}\\
&= \sum_{g\in(\Z/d\Z)^\times}\max\left\{ 0, 1- \frac{\ord_\gp \Ja(g\cdot \ba)}{v_\bA}\right\}.
\end{align*}
\ProofEnd \end{proof}

We also need a more explicit expression of the $\gp$-adic valuations $\ord_\gp\Ja(t\cdot \ba)$ appearing in the previous lemma: 
\begin{lemm}[Stickelberger]\label{lemm.stickelberger}
 Let $\gp$ be as above. %, and let $Q=q^{o_q(d)} = p^v = \norm \gp$. 
For any $\bb=(b_0, \dots,b_3)\in G_d^\circ\subset G_d$, set $v_\bB=\ord_p (q^{|\bB|}) = [\F_{q^{|\bB|}}: \F_p]$. The $\gp$-adic valuation of the Jacobi sum $\Ja(\bb)$ is given by
\begin{equation}\label{eq.stickelberger}
\frac{\ord_{\gp} \Ja(\bb) }{v_{\bB}}
%= \frac{|\bA|}{o_q(d)} \cdot  \sum_{j=0}^{v-1}\left(-1 +\sum_{i=0}^{3} \partfrac{\tfrac{-b_i p^j}{d}}\right)
= \frac{1}{|\sbgp{p}{d}|} \cdot  \sum_{\pi\in\sbgp{p}{d}}\left(3 -\sum_{i=0}^{3} \partfrac{\frac{b_i \cdot\pi}{d}}\right).
\end{equation}
%where $\partfrac{.}:\R\to[0,1[$ denotes the fractional part.
\end{lemm}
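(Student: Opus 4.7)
The plan is to reduce the claim to the classical Stickelberger theorem for Gauss sums via the Gauss--Jacobi relation. First, I would express $\Ja(\bb)$ as a product of Gauss sums: because $\bb\in G_d^\circ$ makes each character $\chi_i$ on $\F_Q^\times$ (with $Q=q^{|\bB|}$) nontrivial while their product $\chi_0\chi_1\chi_2\chi_3$ is trivial, the standard orthogonality computation -- inserting $Q^{-1}\sum_\lambda\psi(\lambda\sum x_i)$ as the indicator of $\sum x_i = 0$ for a nontrivial additive character $\psi$ of $\F_Q$, and using the identity $\sum_{x\in\F_Q^\times}\chi(x)\psi(\lambda x)=\chi(-1)\bar\chi(\lambda) g(\chi)$ -- yields
\[
\Ja(\bb)=\frac{g(\chi_0)\,g(\chi_1)\,g(\chi_2)\,g(\chi_3)}{Q}.
\]
Thus $\ord_\gp\Ja(\bb)=\sum_i\ord_\gp g(\chi_i)-\ord_\gp Q$, and the problem is to compute each $\ord_\gp g(\chi_i)$.

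Next, I would work in the compositum $K'=\Q(\zeta_d,\zeta_p)$, in which the rational prime $p$ acquires ramification index $p-1$; let $\gP\mid\gp$ be the prime of $K'$ corresponding to the paper's fixed embedding. The classical Stickelberger theorem computes the $\gP$-valuation of Gauss sums in terms of a sum of base-$p$ digits, which via the elementary identity $s_p(r)=(p-1)\sum_{j=0}^{v_\bB-1}\partfrac{p^j r/(Q-1)}$ (a direct consequence of the periodic base-$p$ expansion of $r/(Q-1)$) translates into a sum of fractional parts $\partfrac{p^j b_i/d}$. After matching the paper's convention for $\tt$, and using $\partfrac{-x}=1-\partfrac{x}$ for $x\notin\Z$, one obtains
\[
\ord_\gP g(\chi_i)=(p-1)\sum_{j=0}^{v_\bB-1}\bigl(1-\partfrac{p^j b_i/d}\bigr).
\]
Summing over $i\in\{0,1,2,3\}$ and subtracting $\ord_\gP Q=v_\bB(p-1)$ then produces $\ord_\gP\Ja(\bb)=(p-1)\sum_{j=0}^{v_\bB-1}\bigl(3-\sum_i\partfrac{p^j b_i/d}\bigr)$. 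Since $\Ja(\bb)\in\Q(\zeta_d)$, where $p$ is unramified, dividing by $p-1$ yields the same identity for $\ord_\gp\Ja(\bb)$.

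To finish, I would convert the sum over $j\in\{0,\dots,v_\bB-1\}$ into the claimed average over $\pi\in\sbgp{p}{d}$. The function $F(\pi)=3-\sum_i\partfrac{b_i\pi/d}$ only depends on $\pi$ modulo $d_\bb=d/\gcd(d,b_0,\dots,b_3)$; since $d_\bb\mid q^{|\bB|}-1=p^{v_\bB}-1$, the order $o_p(d_\bb)$ divides $v_\bB$, so the sequence $(p^j\bmod d_\bb)_{j=0}^{v_\bB-1}$ uniformly covers $\sbgp{p}{d_\bb}$, and the natural surjection $\sbgp{p}{d}\twoheadrightarrow\sbgp{p}{d_\bb}$ has equal-size fibers. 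Both averages therefore coincide with $|\sbgp{p}{d_\bb}|^{-1}\sum_{\bar\pi}F(\bar\pi)$, and dividing by $v_\bB$ produces the formula announced in the lemma. The main subtlety will be pinning down Stickelberger's sign convention (which determines whether $b_i$ or $d-b_i$ appears in the digit sum) so that it matches the paper's definition of $\tt$ and the chosen prime $\gP$ above $\gp$; once that bookkeeping is done, the proof is a short chain of identities.
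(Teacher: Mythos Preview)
Your proof is correct and follows essentially the same strategy as the paper's: apply Stickelberger to obtain the $\gp$-adic valuation of $\Ja(\bb)$ as a sum of fractional parts, handle the sign via $\partfrac{-y}=1-\partfrac{y}$, and then collapse the sum over powers of $p$ to an average over $\sbgp{p}{d}$ by periodicity. The organizational differences are minor: you unpack the textbook citation by reducing explicitly to Gauss sums through the Gauss--Jacobi relation and passing to $\Q(\zeta_d,\zeta_p)$, and you compute directly over $\F_{q^{|\bB|}}$, whereas the paper first computes $\ord_\gp\Ja_Q(\bb)$ over the larger field $\F_Q$ with $Q=q^{o_q(d)}$ and then invokes the Davenport--Hasse relation \eqref{eq.hassedav.jacobi} to descend. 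Your route avoids Davenport--Hasse at the cost of the slightly more delicate periodicity argument through $d_\bb$, but both arrive at the same identity with the same amount of work.
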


\begin{proof}
%We use the notations of the statement and, 
%$v=o_p(d)$, 
For the duration of the proof, we set $Q=q^{o_q(d)}$, $v= [\F_Q:\F_p] = \ord_p Q$
and $q'=q^{|\bB|}$. % (so that, in particular, $v_\bB\mid v$, \ie{} $q'\mid Q$). 
The computation of the $\gp$-adic valuations of Gauss sums leading to Stickelberger's theorem  (as in \cite[Chap. 14]{IR} or \cite[IV, §3]{LANT}) % or \cite{Stickelberger}) 
implies that the Jacobi sum $\Ja_Q(\bb)$ (relative to $\F_Q$) has $\gp$-adic valuation:
%\begin{equation}\label{eq.stick.val}
\[ \ord_\gp\Ja_Q(\bb) = \sum_{j=0}^{v-1} \left( -1+\sum_{i=0}^3\partfrac{\frac{-b_i \cdot p^j}{d}} \right). \]
%We also use a theorem of Davenport-Hasse  
 Using the expression above, the relation of Davenport-Hasse (to the effect that $\Ja_Q(\bb) = \big(\Ja_{q'}(\bb) \big)^{[\F_Q:\F_{q'}]}$, see \eqref{eq.hassedav.jacobi}) and properties of $\ord_\gp$, this yields that:
 \begin{equation}\label{eq.stick.raw}
\ord_\gp \Ja(\bb) 
= \ord_\gp \Ja_{q'}(\bb) 
= \frac{1}{[\F_Q:\F_{q'}]} \ord_{\gp} \Ja_Q(\bb)
= \frac{1}{[\F_Q:\F_{q'}]} \sum_{j=0}^{v-1} \left( -1+\sum_{i=0}^3\partfrac{\frac{-b_i \cdot p^j}{d}} \right).\end{equation}

We now rearrange this raw expression. First of all, %$q$ being a power of $p$, 
there may be repetitions in the sum over $j$:
%It is not difficult to check that $v=\mathrm{lcm}([\F_q:\F_p], o_p(d))$ so that, in particular, $v$ is a multiple of $o_p(d)$. 
since $v=\mathrm{lcm}([\F_q:\F_p], o_p(d))$, $v$ is a multiple of $o_p(d)$. 
By construction, $d$ divides $p^{o_p(d)}-1$ and any multiple thereof: it follows that 
%\[ \text{for any }x,y\in \Z_{\geq 0}, \  \beta\in\Z, \quad \partfrac{\frac{\beta \cdot p^{x+y\cdot o_p(d)}}{d}} =\partfrac{\frac{\beta \cdot p^x}{d}}.\]
%Applying this with $\beta=-b_i$ and $j= x+y\cdot o_p(d)$, 
we may reindex the sum over $j\in\intent{0,v-1}$ into a sum over $\pi\in\sbgp{p}{d}$ and obtain
\begin{equation}\label{eq.stick1}
\sum_{j=0}^{v-1} \left( -1+\sum_{i=0}^3\partfrac{\frac{-b_i \cdot p^j}{d}} \right)
= \frac{v}{o_p(d)} \cdot \sum_{\pi\in\sbgp{p}{d}} \left( -1+\sum_{i=0}^3\partfrac{\frac{-b_i \cdot \pi}{d}} \right). \end{equation}
Secondly, we note that $\partfrac{-y} = 1-\partfrac{y}$ for all $y\in\R\smallsetminus\Z$ and that $-b_i\cdot \pi /d$ is never an integer for $\pi\in\sbgp{p}{d}$ and $\bb \in G_d^\circ$. This leads to
\begin{equation}\label{eq.stick2}
\sum_{\pi\in\sbgp{p}{d}} \left( -1+\sum_{i=0}^3\partfrac{\frac{-b_i \cdot \pi}{d}} \right)
=\sum_{\pi\in\sbgp{p}{d}} \left( 3-\sum_{i=0}^3\partfrac{\frac{b_i \cdot \pi}{d}} \right).\end{equation}

To conclude, one only needs to combine \eqref{eq.stick1} and \eqref{eq.stick2}  with \eqref{eq.stick.raw}, and to remember that
\[\frac{v}{o_p(d) \cdot [\F_Q:\F_{q'}]}  = \frac{[\F_Q:\F_p]}{o_p(d) \cdot [\F_Q:\F_{q'}]} = \frac{[\F_{q'}:\F_p]}{o_p(d)} = \frac{v_\bB}{|\sbgp{p}{d}|}.\] \ProofEnd \end{proof}

We can now finish the proof of Theorem \ref{theo.proto.lowerbnd}:
% \begin{proof}[of Theorem \ref{theo.proto.lowerbnd}]
%Again, let $K:=\Q(\zeta_d)$ denote the $d$-th cyclotomic field. 
as we noted at the beginning of this subsection, the desired lower bound on $P^\ast(\Lambda)$ follows from one on $\Pi^\ast_\Lambda$. % \in\Z[q^{-1}]\subset \Q$. 
The hypothesis that $\Lambda$ be stable under the action of $(\Z/d\Z)^\times$ implies the rationality of $\Pi^\ast_\Lambda$ (see \eqref{eq.actiongal.jacobi}). In particular, we get: % implies that % Whence it follows that 
\[\log |\spval{\Lambda}| 
\geq \log |\Pi^\ast_\Lambda|
= \frac{\log\norm_{K/\Q}(\Pi^\ast_\Lambda)}{[K:\Q]}
= \frac{1}{[K:\Q]}\sum_{\bA\in\O_q(\Lambda^\ast)} \log \norm_{K/\Q}\left( 1-\frac{\Ja(\ba)}{q^{|\bA|}} \right).
\]

We now use Lemma \ref{lemm.norm.jacobi} on each term of the sum:
\begin{align}
\log |\Pi^\ast_\Lambda|
&\geq \frac{1}{[K:\Q]}\sum_{\bA\in\O_q(\Lambda^\ast)} -\log(q^{|\bA|}) \cdot \sum_{g\in(\Z/d\Z)^\times} \max\left\{ 0, 1- \frac{\ord_{\gp} \Ja(g\cdot \ba)}{ v_{\bA} }\right\} \notag\\
&\geq \frac{-\log q}{\phi(d)} \sum_{g\in(\Z/d\Z)^\times}%\left(
\sum_{\bA\in\O_q(\Lambda^\ast)} |\bA| \cdot \max\left\{ 0, 1- \frac{\ord_{\gp} \Ja(g\cdot \ba)}{ v_{\bA} }\right\} %\right) 
\notag\\
&= \frac{-\log q}{\phi(d)} \sum_{g\in(\Z/d\Z)^\times}%\left(
\sum_{\ba\in\Lambda^\ast} \max\left\{ 0, 1- \frac{\ord_{\gp} \Ja(g\cdot \ba)}{ v_{\bA} }\right\}. \label{eq.prouto}% \right).
\end{align}
%The last equality follows from the fact that, for all $j$, $\Ja(t\cdot \ba) = \Ja(t\cdot q^j \cdot \ba)$. 
For all
 $\bb\in G_d^\circ\subset G_d$, %relation  \eqref{eq.stickelberger} in 
 Lemma \ref{lemm.stickelberger} implies that
\begin{equation}\label{eq.express.ord}\notag
1- \frac{\ord_{\gp} \Ja(\bb)}{v_\bB}
= 1-  \frac{1}{|\sbgp{p}{d}|} \cdot  \sum_{\pi\in\sbgp{p}{d}}\left(3 -\sum_{i=0}^{3} \partfrac{\frac{b_i \pi}{d}}\right)
= \sum_{i=0}^{3} \left( -\frac{1}{2} + \frac{1}{|\sbgp{p}{d}|} \sum_{\pi\in\sbgp{p}{d}} \partfrac{\frac{b_i \pi}{d}} \right).\end{equation}
Plugging this  into \eqref{eq.prouto} with $\bb = g\cdot \ba\in\Lambda^\ast\subset G_d^\circ$ and exchanging the order of summation, we obtain
\[
\log |\Pi^\ast_\Lambda| 
\geq -(\log q)\cdot 
\sum_{\ba\in\Lambda^\ast} w_p(\ba,d) 
\geq -(\log q)\cdot 
\sum_{\ba\in\Lambda^\circ} w_p(\ba, d)  =- (\log q)\cdot w_p(\Lambda, d),
\]
because the terms $w_p(\ba,d)$ we added are nonnegative. This concludes the proof of Theorem \ref{theo.proto.lowerbnd}. \ProofEnd \end{proof}%\end{proof}

\subsection{Proof of Theorem \ref{theo.lowerbnd}}

%%%%%%%

\newcommand{\good}[1]{{#1}^{\! \bullet}} %{\! \bullet}}
\newcommand{\bad}[1]{{#1^{\dagger}}}

In Theorem \ref{theo.proto.lowerbnd}, we introduced the quantity $w_p(\ba, d)$: % that: 
  \[ \forall \ba=(a_0, \dots, a_3)\in G_d^\circ, \quad 
  w_p(\ba, d)=  \frac{1}{\phi(d)} \sum_{g\in(\Z/d\Z)^\times}  \max\left\{ 0, 
 \sum_{i=0}^3\left(  - \frac{1}{2} + \frac{1}{|\sbgp{p}{d}|}\sum_{\pi\in\sbgp{p}{d}} 
\partfrac{\frac{a_i  \pi g}{d}} \right) 
 \right\}, \]
where $\sbgp{p}{d}\subset(\Z/d\Z)^\times$ denotes the subgroup generated by $p$. Notice that $w_p(\ba, d)$ satisfies a trivial upper bound:
\begin{equation}\label{eq.triv.upperbnd.w}
\forall \ba\in G_d^\circ, \qquad 0\leq w_p(\ba, d) \leq 1.
\end{equation}

In light of Theorem \ref{theo.proto.lowerbnd}, proving Theorem \ref{theo.lowerbnd}  %(thus finishing the proof of Theorem \ref{theo.BNDS.GEN}) 
comes down to showing that, assuming \eqref{eq.hyp}, one has
\[ % \forall \epsilon\in(0,1/4), \qquad 
 w_p(\Lambda,d) 
= \sum_{\ba\in\Lambda^\circ} w_p(\ba, d) 
\ll_{p, \epsilon}   |\Lambda| \cdot \left(\frac{\log\log d}{\log d}\right)^{1/4-\epsilon}\]
That is, we have to show that  $w_p(\ba, d)$ is ``often small''. We start by showing that this is indeed the case for certain $\ba\in G_d^\circ$.

\begin{lemm} \label{lemm.good}%\note{added}
Let $d\geq 2$ be an integer, coprime with $q$. For an element $\ba=(a_0, \dots, a_3) \in G_d^\circ$, set $d_i = \gcd(d, a_i)$ and $d'=d/\max\{d_0, \dots, d_3\}$. Then, for all $\epsilon\in(0,1/4)$, one has 
\[ w_p(\ba, d)
%\leq \sum_{i=0}^3 \theta_p(a_i, d)
\leq c_{10} \cdot \left(\frac{\log\log d'}{\log d'}\right)^{1/4-\epsilon},
\]
where the constant $c_{10}>0$ is effective and depends at most on $p$ and $\epsilon$.
\end{lemm}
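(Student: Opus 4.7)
The approach is to (i) dispose of the $\max$ by the elementary inequality $\max\{0,x\} \leq |x|$ together with the triangle inequality, thereby bounding $w_p(\ba,d)$ by a sum of four terms, one for each coordinate $i\in\{0,1,2,3\}$; (ii) reduce each such term to an average over $(\Z/e_i\Z)^\times$ with inner sum over $\sbgp{p}{e_i}$, where $e_i := d/d_i$; and (iii) apply the equidistribution statement of Theorem \ref{theo.equidis} to each term with the identity function $F(t)=t$ on $[0,1]$ (which has $\int_0^1 F = 1/2$ and total variation $\mathscr{V}(F)=1$).

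For step (ii), the key identity is $a_i\pi g/d \equiv a_i' \pi g / e_i \pmod{1}$, where one writes $a_i = d_i a_i'$ with $\gcd(a_i', e_i) = 1$. Since $\ba \in G_d^\circ$ guarantees $a_i \not\equiv 0 \bmod d$, one has $e_i \geq 2$ for each $i$. Moreover, the natural reduction maps $(\Z/d\Z)^\times \twoheadrightarrow (\Z/e_i\Z)^\times$ and $\sbgp{p}{d} \twoheadrightarrow \sbgp{p}{e_i}$ are surjective with fibers of constant sizes $\phi(d)/\phi(e_i)$ and $|\sbgp{p}{d}|/|\sbgp{p}{e_i}|$ respectively. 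Hence the two averages collapse without distortion to averages on $(\Z/e_i\Z)^\times$ and $\sbgp{p}{e_i}$.

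Step (iii) is then a direct invocation of Theorem \ref{theo.equidis} with $d \leftarrow e_i$, $H_{e_i} = \sbgp{p}{e_i}\subset(\Z/e_i\Z)^\times$ and $a_{e_i} = a_i'$. The hypothesis $|H_{e_i}|/\log\log e_i \to \infty$ is automatic from the classical bound $|\sbgp{p}{e_i}| = o_p(e_i) \geq \log(e_i+1)/\log p$ (since $p^{o_p(e_i)}\equiv 1 \bmod e_i$). The theorem would then yield, for every $\epsilon\in(0,1/4)$,
\[\frac{1}{\phi(d)}\sum_{g\in(\Z/d\Z)^\times} \left| -\tfrac{1}{2} + \tfrac{1}{|\sbgp{p}{d}|}\textstyle\sum_{\pi\in\sbgp{p}{d}} \partfrac{\tfrac{a_i\pi g}{d}}\right| \ll_{p,\epsilon} \left(\frac{\log\log e_i}{\log e_i}\right)^{1/4-\epsilon}.\]

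To conclude, I would use the monotonicity of $x\mapsto \log\log x/\log x$ on $(e^e,+\infty)$ together with $d' = \min_i e_i$ to bound each of the four contributions by a constant multiple of $(\log\log d'/\log d')^{1/4-\epsilon}$; finitely many small values of $d'$ are absorbed into $c_{10}$ via the trivial bound $w_p(\ba,d) \leq 1$. The main technical point—and likely the most delicate step—is the bookkeeping in (ii): one must verify carefully that both reduction maps have fibers of constant size, so that averaging on $(\Z/d\Z)^\times$ (resp.\ on $\sbgp{p}{d}$) agrees with averaging on its image modulo $e_i$ with no unwanted weighting. Once this collapse is established, the equidistribution theorem does the rest of the work.
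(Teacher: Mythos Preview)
Your proposal is correct and follows essentially the same route as the paper: the paper likewise bounds $\max\{0,y\}\leq|y|$, splits into four single-coordinate averages $\theta_p(a_i,d)$, collapses each via the surjections $(\Z/d\Z)^\times\twoheadrightarrow(\Z/e_i\Z)^\times$ and $\sbgp{p}{d}\twoheadrightarrow\sbgp{p}{e_i}$ (your step~(ii)), applies Theorem~\ref{theo.equidis} with $F(t)=t$ and $H_{e_i}=\sbgp{p}{e_i}$, and concludes by monotonicity of $n\mapsto(\log\log n/\log n)^{1/4-\epsilon}$. The only cosmetic difference is that the paper packages the reduction identity as $\theta_p(a,d)=\theta_p(a/\delta,d/\delta)$ before invoking equidistribution, whereas you describe the fiber-counting explicitly.
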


\begin{proof}%\todo{finish} 
Let $F:[0,1]\to\R$, $x\mapsto x$, and 
\begin{equation}\label{eq.def.theta}
\forall a\in \Z/d\Z \smallsetminus\{0\}, 
\qquad \theta_p(a, d):= \frac{1}{\phi(d)} \sum_{g\in(\Z/d\Z)^\times}  \left|
\int_{[0,1]} F - \frac{1}{|\sbgp{p}{d}|}\sum_{\pi\in\sbgp{p}{d}}  F\left(\partfrac{\frac{a \pi g}{d}}\right)    \right|.
\end{equation}
%Moreover, we notice that $\theta_p(a_i,d)$ looks almost like the quantity in Theorem \ref{theo.equidis}.
Observe that, for all $a\in\Z/d\Z\smallsetminus\{0\}$ and for all common divisors $\delta$ of $a$ and $d$, one has
$\theta_p(a,d) = \theta_p\left(a/\delta,d/\delta\right)$. %\[\theta_p(a,d) = \theta_p\left(a/\delta,d/\delta\right).\]
Indeed, $\sbgp{p}{d/\delta}$ is the image of $\sbgp{p}{d}$ under the natural surjective morphism $(\Z/d\Z)^\times \to (\Z/(d/\delta) \Z)^\times$, which leads to
\[\frac{1}{|\sbgp{p}{d/\delta}|}\sum_{\pi'\in\sbgp{p}{d/\delta}}\partfrac{\frac{(a/\delta)\pi'}{d/\delta}} 
= \frac{1}{|\sbgp{p}{d}|}\sum_{\pi\in\sbgp{p}{d}}\partfrac{\frac{a\pi}{d}}, \]
and similarly for the outer average on $(\Z/d\Z)^\times$ in \eqref{eq.def.theta}.
In particular, if we set $a'=a/\gcd(d,a)$ and $d'=d/\gcd(d,a)$, then we have $\theta_p(a,d) = \theta_p\left(a',d'\right)$. 

Since $\gcd(a', d')=1$, we can use our equidistribution result (Theorem \ref{theo.equidis}) on $\theta_p(a',d')$, 
with %$F$ as above and  
$H_{d'} := \sbgp{p}{d'}$, seen as a subset of $(\Z/d'\Z)^\times$. 
Then, one has $|H_{d'}|\geq \log d'/\log p$ because, by definition of $o_p(d')=|\sbgp{p}{d'}|$, $d'$ divides $p^{|H_{d'}|}-1$. 
Therefore, applying Theorem \ref{theo.equidis} to this situation yields that, for all $a\in\Z/d\Z\smallsetminus\{0\}$, 
\begin{equation}\label{eq.majo.1}
\theta_p(a',d')\leq c_{7}\cdot(\log p)^{1/4-\epsilon}\cdot E(d')
\quad \text{where we have put } E(d'):=\left(\frac{\log\log d'}{\log d'}\right)^{1/4-\epsilon},\end{equation}
and where the constant $c_7$ depends at most on $\epsilon$.
% Notice that %the function $E(d')$ %in the  right-hand side of \eqref{eq.majo.1} is a decreasing function of $d'$. 

For any $\ba\in G_d^\circ$ as in the statement of the Lemma, using the inequality $\max\{0,y\}\leq |y|$ ($y\in\R$)  and the triangle inequality, we note that,
\[w_p(\ba, d) 
\leq \frac{1}{\phi(d)} \sum_{g\in(\Z/d\Z)^\times}  \left| 
 \sum_{i=0}^3  \left(- \frac{1}{2} + \frac{1}{|\sbgp{p}{d}|}\sum_{\pi\in\sbgp{p}{d}} 
\partfrac{\frac{a_i  \pi g}{d}} \right)\right|
\leq \sum_{i=0}^3 \theta_p(a_i, d).\] 
Upon applying Theorem \ref{theo.equidis} to each $a_i\in\Z/d\Z\smallsetminus\{0\}$ as in the previous paragraph, we get
\[w_p(\ba, d)
\leq \sum_{i=0}^3 \theta_p(a_i, d) 
\leq c_{7} \cdot (\log p)^{1/4-\epsilon} \cdot \sum_{i=0}^{3} E(d/d_i)
\leq 4 c_{7} \cdot (\log p)^{1/4-\epsilon}\cdot E(d/\max_i\{d_i\}),
\]
because $n\mapsto E(n)$ is a decreasing function. Taking $c_{8} = 4  c_{7} \cdot (\log p)^{1/4-\epsilon}$ concludes the proof. 
\ProofEnd \end{proof}

%%%%% 
 
In the case that $d$ is assumed to be prime, the proof of Theorem \ref{theo.lowerbnd} follows directly from Lemma \ref{lemm.good} without extra hypotheses on $\Lambda$.
 Indeed, for a prime $d$ and for any nonempty $(\Z/d\Z)^\times$-stable subset $\Lambda\subset G_d$, all the elements $\ba=(a_0, \dots, a_3)\in\Lambda\cap G_d^\circ$ have $\max_i\{\gcd(d,a_i)\}=1$. % (\ie{} ${d/\max_i\{\gcd(d,a_i)\} =d}$). 
Thus, Lemma \ref{lemm.good} implies that, for all $\epsilon\in(0,1/4)$, one has
\[w_p(\Lambda, d) 
= \sum_{\ba\in\Lambda^\circ} w_p(\ba, d)
\leq  \sum_{\ba\in\Lambda^\circ} c_{8}   \cdot \left(\frac{\log\log d}{\log d}\right)^{1/4-\epsilon} 
\leq c_{8}   \cdot |\Lambda| \cdot  \left(\frac{\log\log d}{\log d}\right)^{1/4-\epsilon},
\]
as claimed. % If $d$ is no longer assumed to be prime, we assume %as was to be shown.
\begin{proof}[of Theorem 6.2] %\todo{finish}
Let $d\geq 2$ be any integer prime to $q$, $\Lambda\subset G_d$ be a nonempty $(\Z/d\Z)^\times$-stable subset which satisfies hypothesis \eqref{eq.hyp}, and $\epsilon\in(0,1/4)$. For %a parameter 
$X\in(1,d)$, define two subsets of~$\Lambda^\circ$:
\[\good{\Lambda}(X) = \left\{\ba=(a_0, \dots, a_3)\in\Lambda^\circ \ \big| \ \max_i\{\gcd(d,a_i)\}\leq X\right\} \]
and $\bad{\Lambda}(X) := \Lambda^\circ\smallsetminus\good{\Lambda}(X)=  \left\{\ba\in\Lambda^\circ \ \big| \  \max_i\{\gcd(d,a_i)\}> X\right\}$. 
We start by cutting the sum $w_p(\Lambda,d)$ into two parts:
\[ w_p(\Lambda, d) 
= \sum_{\ba \in \Lambda^\circ} w_p(\ba, d)
= \sum_{\ba \in \good{\Lambda}(X)} w_p(\ba, d) + \sum_{\ba \in \bad{\Lambda}(X)} w_p(\ba, d). \]
Since $w_p(\ba,d)$ satisfies the trivial bound \eqref{eq.triv.upperbnd.w}, the second sum is less than $|\bad{\Lambda}(X)|$. 
To bound the first sum, we make use of Lemma \ref{lemm.good} and of the bound $|\good{\Lambda}(X)|\leq |\Lambda|$. 
This yields:
\[  \sum_{\ba \in \good{\Lambda}(X)} w_p(\ba, d)
\leq \sum_{\ba \in \good{\Lambda}(X)} c_{8}   \cdot \left(\frac{\log\log (d/X)}{\log (d/X)}\right)^{1/4-\epsilon} 
%\leq c_{8}   \cdot |\good{\Lambda}(X)| \cdot  \left(\frac{\log\log d}{\log d}\right)^{1/4-\epsilon}  
\leq c_{8}   \cdot |\Lambda| \cdot  \left(\frac{\log\log (d/X)}{\log (d/X)}\right)^{1/4-\epsilon}.
\]
Summing these two contributions, we arrive at:
\[ w_p(\Lambda, d) 
= \sum_{\ba \in \Lambda^\circ} w_p(\ba, d)
\leq  |\Lambda|\cdot \left( c_{8}   \cdot  \left(\frac{\log\log (d/X)}{\log (d/X)}\right)^{1/4-\epsilon}  
+ \frac{|\bad{\Lambda}(X)|}{|\Lambda|}
\right). \]
By hypothesis \eqref{eq.hyp}, for the given $\epsilon\in(0,1/4)$, there exists $u\in (0,1)$ and a constant $c'$ such that 
${\bad{|\Lambda}(d^u)|}/{|\Lambda|}\leq c' \cdot \left(\frac{\log\log  d}{\log d}\right)^{1/4-\epsilon}$. So that, on choosing $X=d^u$, we obtain that
\[ \frac{w_p(\Lambda, d)}{|\Lambda|}
\leq \frac{c_8}{1-u} \cdot \left(\frac{\log\log  d}{\log d}\right)^{1/4-\epsilon}  + c' \cdot \left(\frac{\log\log  d}{\log d}\right)^{1/4-\epsilon}.\]
Setting $C_1 = c_8/(1-u)  + c'>0$, we have proved the claimed lower bound on $P^\ast(\Lambda)$. % (with a constant $C_1$ depending at most on $p$, $\epsilon$, $u$ and $c'$).
\ProofEnd\end{proof}

\subsection{Conclusion}%\todo{finish}

Let us recapitulate the results we have obtained so far: let $\F_q$ be a finite field of characteristic $p$, $d\geq 2$ be an integer prime to $q$, and denote by $\ferm_d/\F_q$ the Fermat surface of degree $d$ over $\F_q$. Then, in the notations of sections \ref{sec.zeta.ferm} and \ref{sec.setting}, one has
\[P_2(\ferm_d/\F_q, T) = P(G_d, T)\in\Z[T]\]
(see Theorem \ref{theo.weil} and Example \ref{ex.fermat.P2}). So that, on the one hand, one has 
\begin{equation}\label{eq.conclu1}
P^\ast(G_d) = P_2^\ast(\ferm_d/\F_q, q^{-1})= \frac{|\Br(\ferm_d)|\cdot \Reg(\ferm_d)}{q^{p_g(\ferm_d)}} \in \Z[q^{-1}]\smallsetminus\{0\}
\end{equation}
by the Artin-Tate Conjecture (Theorem \ref{theo.shioda.AT} and Proposition \ref{prop.fermat.spval}). On the other hand, provided that $G_d$ satisfies hypothesis \eqref{eq.hyp}, %(see Theorem \ref{theo.lowerbnd}), 
we have proved that
\begin{equation}\label{eq.conclu2}
\forall \epsilon\in(0,1/4),\quad 
 - C_1 \cdot\left(\frac{\log\log d}{\log d}\right)^{1/4-\epsilon}\leq \frac{\log |P^\ast(G_d)|}{  \log q^{|G_d|} } 
 \leq C_2 \cdot\frac{\log\log |G_d|}{\log|G_d|},
 \end{equation}
for some constants $C_1, C_2$ depending on $q$, $p$ and $\epsilon$ (see Theorems \ref{theo.upperbnd} and \ref{theo.lowerbnd}).
Since $|G_d| = d^3$ and $p_g(\ferm_d)\sim d^3/6$ (as $d\to\infty$), the identity \eqref{eq.conclu1} and the inequalities \eqref{eq.conclu2} directly imply our main result (Theorem \ref{theo.BS.FERMAT}). 
Consequently, there only remains to prove that $G_d$  indeed satisfies \eqref{eq.hyp}  for any integer $d\geq 2$  coprime with $q$.

\begin{lemm}\label{lemm.bad}%\note{added}\todo{finish}
Let $d\geq 2$ be an integer and, for $X\in(1,d)$, define the subset 
\[\bad{G_d}(X):=\left\{\ba=(a_0, \dots, a_3)\in G_d \ \big| \ d>\max_i\{\gcd(d,a_i)\} > X\right\}\subset G_d^\circ.\]
Then, for all $u\in(0,1)$, there exists a constant $c_9$ depending only on $u$ such that: %one has:
%\begin{equation}\label{eq.lemmabad}
%\frac{|\bad{G_d}(d^u)|}{|G_d|} 
%\leq c_9 \cdot d^{-u/2}.
%%\leq c_9 \cdot \frac{\tau(d)}{X},
%\end{equation}
\begin{equation}\label{eq.lemmabad}
|\bad{G_d}(d^u)|
\leq c_9 \cdot d^{-u/2}\cdot |G_d|.
%\leq c_9 \cdot \frac{\tau(d)}{X},
\end{equation}
%where $\tau(d)$ is the number of divisors of $d$ and the constant $c_9$ is absolute. %\todo{is this true for any $\Lambda$??}
In particular, $G_d$ satisfies hypothesis \eqref{eq.hyp}.
\end{lemm}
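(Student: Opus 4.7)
My plan is to bound $|\bad{G_d}(X)|$ by a union bound over the coordinate achieving the maximal $\gcd$. By the symmetric role of the $a_i$'s in the definition of $G_d$, one has
\[|\bad{G_d}(X)| \leq \sum_{i=0}^{3} |\{\ba \in G_d \ |\ \gcd(d,a_i) > X\}| = 4 \cdot |\{\ba \in G_d \ |\ \gcd(d,a_0) > X\}|.\]
For any fixed value of $a_0 \in \Z/d\Z$, the number of triples $(a_1, a_2, a_3) \in (\Z/d\Z)^3$ satisfying $a_1+a_2+a_3 \equiv -a_0 \pmod d$ is exactly $d^2$. Hence the right-hand side equals $4 d^2 \cdot N(X)$, where $N(X) := |\{a_0 \in \Z/d\Z \ | \ \gcd(d,a_0) > X\}|$. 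Sorting $a_0$ by the value of $\delta := \gcd(d,a_0)$ and substituting $e = d/\delta$, one gets
\[N(X) = \sum_{\substack{\delta \mid d \\ \delta > X}} \phi(d/\delta) = \sum_{\substack{e \mid d \\ e < d/X}} \phi(e).\]

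Next I would bound this last sum. The naive estimate $\sum_{e < Y} \phi(e) \ll Y^2$ (not restricting to divisors) is too weak: with $Y = d/X = d^{1-u}$ it only gives $N(X) \ll d^{2-2u}$, which yields $|\bad{G_d}(d^u)|/|G_d| \ll d^{1-2u}$, a useful bound only for $u > 2/3$. The key improvement is to exploit that $e$ ranges only over divisors of $d$: using $\phi(e) \leq e < d/X$ and the fact that $d$ has at most $\tau(d)$ divisors, one obtains the cleaner estimate
\[N(X) \leq \frac{d}{X} \cdot \tau(d).\]
Substituting $X = d^u$ and invoking the classical divisor bound $\tau(d) \ll_{\eta} d^{\eta}$ for any $\eta > 0$, applied with $\eta = u/2$, I get
\[|\bad{G_d}(d^u)| \leq 4 d^2 \cdot d^{1-u} \cdot \tau(d) \ll_{u} d^{3 - u/2} = d^{-u/2} \cdot |G_d|,\]
which is precisely \eqref{eq.lemmabad} with a constant $c_9$ depending only on $u$.

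Finally, to verify hypothesis \eqref{eq.hyp} for $\Lambda = G_d$: given any $\epsilon \in (0, 1/4)$, pick for instance $u = 1/2 \in (0,1)$. The bound just established gives $|\bad{G_d}(d^{1/2})|/|G_d| \leq c_9 \cdot d^{-1/4}$. Since $d^{-1/4}$ decays faster in $d$ than any power of $\log\log d / \log d$, there is a constant $c' = c'(u,\epsilon)$ such that $c_9 d^{-1/4} \leq c' \cdot (\log\log d/\log d)^{1/4-\epsilon}$ for all $d \geq 3$; finitely many small values of $d$ are absorbed into $c'$. This confirms \eqref{eq.hyp}. The main (mild) obstacle is precisely the step where the naive bound on $\sum_{e<Y}\phi(e)$ is insufficient and one must use the divisibility constraint $e\mid d$ together with the subpolynomial growth of $\tau(d)$.
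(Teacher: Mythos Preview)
Your proof is correct and follows essentially the same approach as the paper: a union bound over the coordinate $i$, a count over divisors of $d$ exceeding $X$, and the divisor bound $\tau(d)\ll_\eta d^{\eta}$ with $\eta=u/2$. The paper packages the count slightly differently (via the subgroups $H_i(g)=\{\ba\in G_d:\ g\mid a_i\}$ with $|H_i(g)|=|G_d|/g$), but both arguments land on the same estimate $|\bad{G_d}(X)|\leq 4\,\tau(d)\,|G_d|/X$ before invoking the divisor bound.
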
 

\begin{proof} For any divisor $g$ of $d$ and $i\in\intent{0,3}$, let $H_{i}(g)$ be the subgroup of $G_d$ defined by \[H_i(g) = \left\{ \ba=(a_0, \dots, a_3)\in G_d \ \big| \ a_i\equiv 0 \bmod{g}\right\}.\]
Since the group homomorphism $G_d \to \Z/g\Z$ given  by $\ba \mapsto a_i\bmod{g}$ is surjective with kernel $H_i(g)$, the isomorphim theorem yields $|H_i(g)|=|G_d|/g = d^3/g$.
On the other hand, one has 
%\[\bad{G_d}(X) 
%\subset \bigcup_{\substack{g\mid d \\ X<g<d}} \left\{ \ba\in\Lambda \big| \max_i\{\gcd(d, a_i)\} = g\right\}
%\subset \bigcup_{\substack{g\mid d \\ X<g<d}} \bigcup_{i=0}^3 \left\{ \ba\in\Lambda \big| \gcd(d, a_i) = g\right\}
%\subset \bigcup_{\substack{g\mid d \\ X<g<d}} \bigcup_{i=0}^3 H_i(g),
%\]
\begin{align*}
|\bad{G_d}(X) |
&= \sum_{\substack{g\mid d \\ X<g<d}} \left| \left\{ \ba\in G_d \big| \max_i\{\gcd(d, a_i)\} = g\right\}\right|
\leq \sum_{\substack{g\mid d \\ X<g<d}} \sum_{i=0}^3 \left|\left\{ \ba\in G_d \big| \gcd(d, a_i) = g\right\}\right| \\
&\leq \sum_{\substack{g\mid d \\ X<g<d}} \sum_{i=0}^3 |H_i(g)| = \sum_{i=0}^3\sum_{\substack{g\mid d \\ X<g<d}} \frac{|G_d|}{g} 
\leq 4\cdot \frac{|G_d|}{X} \cdot \left| \left\{ g \in\intent{X, d} \text{ such that } g\mid d %\text{ such that } X< g <d
\right\}\right|.
\end{align*}
Denoting by $\tau(d)$ the number of divisors of $d$, the set $ \left\{ g \in\intent{X, d} \text{ such that } g\mid d \right\}$ has less than $\tau(d)$ elements.
It is classical that, for all $v>0$, there is an explicit constant $c_v$ such that $\tau(d) \leq c_v \cdot d^v$  (see \cite[Thm. 315]{HardyWright}). % for all $d$.  
In particular, for $v=u/2 >0$, we have proved:
\[ |G_d(d^u)| 
\leq 4 \cdot \frac{|G_d|}{d^u}  \cdot \tau(d) 
\leq 4 c_{u/2}\cdot d^{-u/2} \cdot |G_d|,\]
which is the claimed upper bound \eqref{eq.lemmabad}.  
Hypothesis  \eqref{eq.hyp} for $G_d$ is then a  consequence of the fact that, for all $\epsilon \in(0,1/4)$ and all $u\in(0,1)$, one has  $d^{-u/2} = o\left(\left(\tfrac{\log\log d}{\log d}\right)^{1/4-\epsilon}\right)$ as $d\to\infty$.
%\todo{finish}
 %\]
%so that $|\bad{G_d}(X)|\leq \sum_{i=0}^3\sum_{\substack{g\mid d \\ X<g<d}} |H_i(g)|$
%If $\ba=(a_0,\dots, a_3 )\in\Lambda^\circ$ is an element of $\bad{\Lambda}(X)$, then at least one coordinates, say $a_0$, has $\gcd(d,a_0)>X$. %Further, for any divisor $g$ of $d$, any $\ba\in\Lambda$ with $g=\gcd(d, a_0)$ has
%\begin{align*}
%|\bad{\Lambda}(X)| 
%&\leq \sum_{\substack{g\mid d  \\ X<g<d}} \left|\left\{ \ba\in\Lambda \big| \max_i\{\gcd(d, a_i)\} = g\right\}\right|
%\leq \sum_{i=0}^3\sum_{\substack{g\mid d  \\ X<g<d}} \left|\left\{ \ba\in\Lambda \big| \gcd(d, a_i) = g\right\}\right|\\
%&\leq \sum_{i=0}^3\sum_{\substack{g\mid d  \\ X<g<d}} \left|\left\{ \ba\in\Lambda \big|  a_i\equiv 0 \bmod{g}\right\}\right| 
%\leq \sum_{i=0}^3\sum_{\substack{g\mid d  \\ X<g<d}} \left|\left\{ \ba\in G_d \big|  a_i\equiv 0 \bmod{g}\right\}\right|  \\
%&\leq 4\cdot \sum_{\substack{g\mid d  \\ X<g<d}} \frac{d^3}{g} 
%\leq 4 \cdot \frac{d^3 \cdot \tau(d) }{X}.
%\end{align*}
%
%It is classical (see \cite[Thm. 315]{HardyWright}) that, for all $v>0$, there is an explicit constant $c_v$ such that $\tau(d) \leq c_v \cdot d^v$ for all $d$.\todo{finish}
\ProofEnd\end{proof}

Finally, putting \eqref{eq.conclu1} and \eqref{eq.conclu2} together, we obtain a strong version of Theorem \ref{theo.BS.FERMAT}:
\begin{coro}\label{theo.BNDS.GEN} Let $\F_q$ be  a finite field of characteristic $p$, and $\epsilon \in (0,1/4)$. For an integer $d\geq 2$ prime to $q$, as $d\to\infty$, %let $\Lambda= G_d$. 
one has %the special value $P^\ast(\ferm_d/\F_q) = P^\ast(G_d) $ satisfies:
\begin{equation}\label{eq.SPVALBOUND.goal}
\frac{\log\left( |\Br(\ferm_d)|\cdot \Reg(\ferm_d)\right)}{\log q^{p_g(\ferm_d)}} 
= 1+ \frac{\log P^\ast(G_d)}{\log q^{p_g(\ferm_d)}} 
= 1 + O\left(\left(\tfrac{\log\log d}{\log d}\right)^{1/4-\epsilon}\right),
\end{equation}%\todo{update}
where the implicit constants %$C_1, C_2 >0$ 
are effective, and depend at most on $q$, $p$ and $\epsilon$.
% $C_1$ (resp. $C_2$) depends only on $q$ and $\epsilon$ (resp. only on $q$).
\end{coro}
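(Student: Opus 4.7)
The proof is essentially an assembly of the results already proved in the paper, but let me describe it carefully. The strategy splits into verifying the first equality (an identity), then controlling the second ratio by separate upper and lower bounds on $\log P^\ast(G_d)$.

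For the first equality, I would invoke two ingredients identified earlier. On the one hand, Example \ref{ex.fermat.P2} shows $P(G_d, T) = P_2(\ferm_d/\F_q, T)$, so passing to special values gives $P^\ast(G_d) = P_2^\ast(\ferm_d/\F_q, q^{-1})$. On the other hand, since $\ferm_d$ satisfies the Artin-Tate conjecture (Theorem \ref{theo.shioda.AT}), Proposition \ref{prop.fermat.spval} yields
\[ P_2^\ast(\ferm_d/\F_q, q^{-1}) = \frac{|\Br(\ferm_d)|\cdot\Reg(\ferm_d)}{q^{p_g(\ferm_d)}}. \]
Taking logarithms and dividing by $\log q^{p_g(\ferm_d)}$ gives the first equality.

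For the second equality, I would apply the upper bound of Theorem \ref{theo.upperbnd} and the lower bound of Theorem \ref{theo.lowerbnd} with $\Lambda = G_d$, which has size $|G_d| = d^3$. The upper bound reads $\log |P^\ast(G_d)| \ll \log q \cdot d^3 \cdot (\log\log d)/(\log d)$, and dividing by $\log q^{p_g(\ferm_d)}$ yields a contribution of size $O((\log\log d)/\log d)$, which is absorbed into the stated error term since $(\log\log d)/\log d \leq \bigl((\log\log d)/\log d\bigr)^{1/4-\epsilon}$ for $d$ large. For the lower bound, I first need to check that $\Lambda = G_d$ satisfies hypothesis \eqref{eq.hyp} — this is exactly the content of Lemma \ref{lemm.bad}, which bounds $|\bad{G_d}(d^u)|$ by $c_9 d^{-u/2}|G_d|$ for any $u \in (0,1)$; since $d^{-u/2}$ decays faster than any power of $(\log\log d)/\log d$, hypothesis \eqref{eq.hyp} holds. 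Theorem \ref{theo.lowerbnd} then gives
\[ \log |P^\ast(G_d)| \gg - \log q \cdot d^3 \cdot \bigl((\log\log d)/\log d\bigr)^{1/4-\epsilon}. \]

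Finally, I would use the asymptotic $p_g(\ferm_d) = \binom{d-1}{3} \sim d^3/6$ recalled in Section \ref{sec.fermatsurf}, which implies $\log q^{p_g(\ferm_d)} \sim (\log q) d^3/6$. Dividing both the upper and lower bounds on $\log |P^\ast(G_d)|$ by this quantity produces an $O\bigl((\log\log d/\log d)^{1/4-\epsilon}\bigr)$ term, with constants depending only on $q$, $p$, and $\epsilon$, as claimed.

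There is no genuine obstacle at this stage: all the analytic work (equidistribution, Stickelberger, Jacobi sum estimates) was carried out in the previous sections. The only slightly delicate point is bookkeeping the dependence of constants — one must verify that the implicit constants in Theorems \ref{theo.upperbnd} and \ref{theo.lowerbnd} combined with the comparison between $|G_d| = d^3$ and $p_g(\ferm_d) \sim d^3/6$ yield an error depending only on $q$, $p$, $\epsilon$ (in particular, independent of $d$), which is straightforward from the statements.
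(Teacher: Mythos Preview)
Your proposal is correct and follows essentially the same approach as the paper: the paper's Conclusion subsection assembles exactly these ingredients (Example \ref{ex.fermat.P2}, Proposition \ref{prop.fermat.spval}, Theorems \ref{theo.upperbnd} and \ref{theo.lowerbnd}, Lemma \ref{lemm.bad}, and the asymptotic $p_g(\ferm_d)\sim d^3/6$) in the same order to deduce Corollary \ref{theo.BNDS.GEN}.
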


%Since $|G_d|=d^3 \sim 6 p_g(\ferm_d)$ as $d\to\infty$, this corollary % \ref{theo.BNDS.GEN} directly implies Theorem \ref{theo.BOUND.SPVAL}, and thus our main result (Theorem \ref{theo.BS.FERMAT}) by Proposition \ref{prop.fermat.spval}. %, by Example \ref{ex.fermat.P2}.\todo{more details}
%The proof of the upper bound in \eqref{eq.SPVALBOUND.goal} will be the goal of section \ref{sec.upperbound} (Theorem \ref{theo.upperbnd}) and the proof of the lower bound, which is more subtle, will occupy section \ref{sec.lowerbound} (see Theorem \ref{theo.lowerbnd}). %  and \ref{sec.conclower}. 
%Since we are only interested in asymptotic bounds when $|\Lambda|\to\infty$, we can (and will, implicitly) assume that $|\Lambda|$ is big enough for $\log|\Lambda|$, $\log\log|\Lambda|$, ... to be positive. \todo{finish}

%
%\[ \frac{\log\left( |\Br(\ferm_d)|\cdot \Reg(\ferm_d)\right)}{\log q^{p_g(\ferm_d)}} 
%= 1+ \frac{\log P^\ast(G_d)}{\log q^{p_g(\ferm_d)}} 
%= 1 + O\left(\left(\tfrac{\log\log d}{\log d}\right)^{1/4-\epsilon}\right) \quad (\text{as }d\to\infty).\]
%
%

%Which concludes the proof. \ProofEnd 

%%%%%%%%%%%%%%%%%%%%%%%%%%%%%%%%%%%%%%%%%%%%%%%%%%%%%%%%%%%%%%%%%%%%%%%%%%%%%%%%%%%%%%%%%%%%%%%%%%%%%%%%%%%%%%%%%%%%%%%%%%%%%%%%%%%%%%%%%%%%%%%%%%%%%%%%%%%%%%%%%%%%%%%%%%%%%%%%%%%%%%%%%%%%%%%%%%%%%%%%%%%%%%%%%%%%%%%%%%%%%%%%%%%%%%%%%%%%%%%%%%%%%%%%%%%%%%%%%%%%%%%%%%%%%%%%%%%%%%%%%%%%%%%%%%%%%%%%%%%%%%%%%%%%%%%%%%%%%%%%%%%%%%%%%%%%%%%%%%%%%%%%%%%%%%%%

%%% BACKREF:
%%% définir un style pour les backrefs:
\newcommand{\mapolicebackref}[1]{%
%    \hspace*{\fill} \mbox{\textit {\small #1}}
         \hspace*{-5pt}{\textcolor{gray}{\footnotesize$\uparrow$ #1}}
}
%pour gérer les backref dans la bibliographie
\renewcommand*{\backref}[1]{
\mapolicebackref{#1}
}
\hypersetup{linkcolor=gray}

%\bibliographystyle{biblio/alpha-fr}
%% En anglais
%\bibliographystyle{alpha-fr} 
%\bibliographystyle{plain} 
\pdfbookmark[0]{References}{references} 
\addcontentsline{toc}{section}{References}
\bibliographystyle{alpha}
\bibliography{Biblio_Art1} % pour afficher la biblio

%\newpage \tableofcontents

\end{document}